\documentclass[11pt,a4paper]{article}
\usepackage{}
\usepackage{amsfonts}
\usepackage{mathrsfs}
\usepackage{multirow}
\usepackage{makecell,booktabs}
\usepackage{subfigure}
 \usepackage{epsfig}
 \usepackage{epstopdf}
\usepackage[T1]{fontenc}
\usepackage{geometry}
\usepackage{amsbsy,amsmath,latexsym,amsfonts, epsfig, color, authblk, amssymb, graphics, bm, caption}
\usepackage{algorithm}
\usepackage{algorithmic}
\usepackage{cases}
\usepackage{rotating}
\usepackage[colorlinks, citecolor=blue]{hyperref}
\graphicspath{{Fig/}}

\newtheorem{theorem}{Theorem}[section]
\newtheorem{lemma}{Lemma}[section]

\newtheorem{definition}{Definition}[section]
\newtheorem{example}{Example}[section]
\newtheorem{proposition}{Proposition}[section]
\newtheorem{corollary}{Corollary}[section]
\newtheorem{assumption}{Assumption}

\newtheorem{remark}{Remark}[section]

\newtheorem{aexample}{Example}

\newenvironment{proof}{{\noindent \bf Proof:}}{\hfill$\Box$\medskip}

\newcommand{\qed}{\hfill$\Box$\medskip}

\definecolor{lred}{rgb}{1,0.8,0.8}
\definecolor{lblue}{rgb}{0.8,0.8,1}
\definecolor{dred}{rgb}{0.6,0,0}
\definecolor{dblue}{rgb}{0,0,0.5}
\definecolor{dgreen}{rgb}{0,0.5,0.5}

\title{An inexact LPA for DC composite optimization and application to matrix completions with outliers}

\author{Ting Tao\footnote{(\href{mailto:taoting@fosu.edu.cn}{taoting@fosu.edu.cn}) School of Mathematics, Foshan University, Foshan },\ \	
 Ruyu Liu\footnote{(\href{mailto:maruyuliu@mail.scut.edu.cn}{maruyuliu@mail.scut.edu.cn}) School of Mathematics, South China University of Technology},\ \ {\rm and}\ \	 	
 Shaohua Pan\footnote{(\href{mailto:shhpan@scut.edu.cn}{shhpan@scut.edu.cn}) School of Mathematics, South China University of Technology, Guangzhou}}

 \begin{document}

 \maketitle

\begin{abstract}This paper concerns a class of DC composite optimization problems which, as an extension of convex composite optimization problems and DC programs with nonsmooth components, often arises in robust factorization models of low-rank matrix recovery. For this class of nonconvex and nonsmooth problems, we propose an inexact linearized proximal algorithm (iLPA) by computing at each step an inexact minimizer of a strongly convex majorization constructed with a partial linearization of their objective functions at the current iterate. We establish the full convergence of the generated iterate sequence under the Kurdyka-\L\"ojasiewicz (KL) property of a potential function, and employ the composite structure to provide a verifiable condition for the potential function to satisfy the KL property of exponent $1/2$ at the limit point, so for the iterate sequence to have a local R-linear convergence rate. This condition is weaker than the one provided in \cite[Theorem 3.2]{LiPong18} for identifying the KL property of exponent $p\in[0,1)$ for a general composite function. The proposed iLPA is applied to a robust factorization model for matrix completion with outliers and non-uniform sampling, and numerical comparisons with the Polyak subgradient method and a proximal alternating minimization (PAM) method validate its efficiency.
\end{abstract}

\noindent
{\bf Keywords:}\ DC composite optimization problems; inexact LPA; global convergence; KL property; matrix completion with outliers.

\maketitle

\medskip
%----------------------------------------------------------------------------------
\section{Introduction}\label{sec1}

The class of problems of minimizing the composition of well-structured outer functions (such as convex functions, piecewise linear-quadratic functions, DC functions) with smooth mappings plays a crucial role in numerical optimization. This class of problems provides a unified framework for studying the theory of many important classes of optimization problems such as amenable optimization, convex conic optimization and convex inclusions (see \cite{BS00,RW98}), and has extensive applications in many fields such as machine learning, statistics, signal and image processing, data science,  telecommunication, and so on  (see, e.g., \cite{LeTai05,LeTai18,Sra11}). The class of convex composite optimization problems, as reviewed later, has attracted the study of many researchers in the past few decades. In this paper, we are interested in much harder and challenging DC composite optimization problem of the form
\begin{equation}\label{prob}
 \min_{x\in\mathbb{X}}\,\Phi(x)\!:=\vartheta_1(F(x))-\vartheta_2(G(x))+h(x),
\end{equation}
where $F\!:\mathbb{X}\to\!\mathbb{Y},G\!:\mathbb{X}\to\mathbb{Z}$ and $\vartheta_1\!:\mathbb{Y}\to\mathbb{R},\vartheta_2\!:\mathbb{Z}\to\mathbb{R},h\!:\mathbb{X}\to\overline{\mathbb{R}}$ satisfy the following assumption:
%-------------------------------------------------------------------
\begin{assumption}\label{ass0}
\begin{enumerate}
 \item[(i)] $F$ and $G$ are differentiable on an open set $\mathcal{O}\supset{\rm dom}\,h$ (the domain of $h$), and their differential mappings $F'$ and $G'$ are strictly continuous on $\mathcal{O}$;

 \item[(ii)] $\vartheta_1$ and $\vartheta_2$ are convex functions, and the conjugate $\vartheta_1^*$ of $\vartheta_1$ is continuous relative to ${\rm dom}\,\vartheta_1^*$; 
 
 \item[(iii)] $h$ is a proper lower semicontinuous (lsc) convex function that is continuous relative to ${\rm dom}\,h\ne\emptyset$, and its conjugate $h^*$ is continuous relative to ${\rm dom}\,h^*$;   
  
 \item[(iv)] the function $\Phi$ is bounded from below, i.e., $\inf_{x\in\mathbb{X}}\Phi(x)>-\infty$.
\end{enumerate}
\end{assumption}

The nonconvex and nonsmooth problem \eqref{prob}, as shown by Examples \ref{exam1}-\ref{exam3} below, not only covers exact penalty problems of DC programs with nonconvex constraints, but also has important applications in machine learning and image processing. Though the first two terms of $\Phi$ can be compactly written as $\vartheta(H(\cdot))$ for a DC function $\vartheta$ and a smooth mapping $H$, we keep the current form for its clear structure.
%------------------------------------------------------------------
 
 \begin{example}\label{exam1}
 Let $\Omega\subset\mathbb{X}$ and $K\subset\mathbb{Y}$ be the simple closed convex sets, and let $f_1,f_2:\mathbb{X}\to\mathbb{R}$ be the convex functions. Consider the following DC program with nonconvex constraints 
 \begin{equation*}%\label{DC-constraint}
 \min_{x\in\Omega}\big\{f_1(x)-f_2(x)\ \ {\rm s.t.}\ \ g(x)\in K\big\},
 \end{equation*}
 where $g:\mathbb{X}\to\mathbb{Y}$ is a twice continuously differentiable mapping. A common exact penalty for it is 
 \[
   \min_{x\in\mathbb{X}}f_1(x)+\beta{\rm dist}(g(x),K)-f_2(x)+\chi_{\Omega}(x),
 \]
 where $\beta>0$ is the penalty parameter, ${\rm dist}(\cdot,K)$ denotes a distance function induced by a norm on $\mathbb{Y}$, and $\chi_{\Omega}$ represents the indicator function of the set $\Omega$. Obviously, the penalty problem has the form \eqref{prob} with  $\vartheta_1(y^1,y^2)=f_1(y^1)+\beta{\rm dist}(y^2,K)$ for $(y^1,y^2)\in\mathbb{X}\times\mathbb{Y}$, $ F(x)=(x;g(x))$ for $x\in\mathbb{X}$, $\vartheta_2(z)=f_2(z)$ for $z\in\mathbb{X}$, $G(x)=x$ and $h(x)=\chi_{\Omega}(x)$ for $x\in\mathbb{X}$. 
 \end{example}
 \begin{example}\label{exam2}
 Many image tasks can be modelled as the nonconvex and nonsmooth problem (see \cite{Jonas18})
 \begin{equation*}\label{image-model}
 \min_{u\in[a,b]}E(u):=T_{w}(Ag(u))+\|Du\|_1,
 \end{equation*}
 where $[a,b]$ is a box set in $\mathbb{R}^n$, $T_{w}(z)=\sum_{i=1}^m\min\{|z_i-w_i|^p,\gamma\}$ with given $p\ge 1,\gamma>0$ and $w\in\mathbb{R}^m$, $A\in\mathbb{R}^{m\times n}$ is a matrix, $g\!:\mathbb{R}^n\to\mathbb{R}^n$ is a mapping defined by $g(u):=(e^{u_1},\ldots,e^{u_n})^{\top}$, and $D\!:\mathbb{R}^n\to\mathbb{R}^q$ denotes a finite-difference gradient operator. Note that $T_{w}(z)=\sum_{i=1}^m|z_i-w_i|^p-\sum_{i=1}^m\max\{|z_i-w_i|^p-\gamma,0\}$. This model takes the form of \eqref{prob} with $F(u)=(Ag(u);D(u)),G(u)=g(u)$ and $h(u)=\chi_{[a,b]}(u)$ for $u\in\mathbb{R}^n$, and $\vartheta_1(z,y)=\sum_{i=1}^m|z_i-w_i|^p+\|y\|_1$ and $\vartheta_2(z)=\sum_{i=1}^m\max\{|z_i-w_i|^p-\gamma,0\}$ for $(z,y)\in\mathbb{R}^m\times\mathbb{R}^q$.
 \end{example}
 \begin{example}\label{exam3}
 In machine learning, the robust factorized model of low-rank matrix recovery is given by
\begin{align}\label{SCAD-loss}
 \min_{U\in\mathbb{R}^{n_1\times r},V\in\mathbb{R}^{n_2\times r}}\Phi(U,V):=\vartheta(\mathcal{A}(UV^{\top})\!-b)+\lambda(\|U\|_{2,1}+\|V\|_{2,1}),
\end{align}
 where $\|\cdot\|_{2,1}$ denotes the column $\ell_{2,1}$-norm of matrices,  $\lambda(\|U\|_{2,1}+\|V\|_{2,1})$ is a regularizer used to reduce the rank via column sparsity,  $\mathcal{A}\!:\mathbb{R}^{n_1\times n_2}\to\mathbb{R}^{m}$ is a sampling operator, $b\in\mathbb{R}^{m}$ is an observation vector, and $\vartheta\!:\mathbb{R}^m\to\mathbb{R}$ is a DC function to promote sparsity. Such $\vartheta$ includes the popular SCAD, MCP and capped $\ell_1$-norm functions, which are shown to be the equivalent DC surrogates of the zero-norm function \cite{ZhangPan22}, and the associated DC loss is more robust against outliers and heavy-tailed noise. By letting $\vartheta=\vartheta_1-\vartheta_2$, this problem takes the form of \eqref{prob} with $F(x)=G(x)=\mathcal{A}(UV^{\top})-b$ and $h(x)=\lambda(\|U\|_{2,1}+\|V\|_{2,1})$ for $x=(U,V)\in\mathbb{R}^{n_1\times r}\times\mathbb{R}^{n_2\times r}$. When choosing $\vartheta$ to be the SCAD function, we have $\vartheta_1(y)=\|y\|_1$ and $\vartheta_2(y)=\frac{1}{\rho}\sum_{i=1}^{m}\theta_{a}(\rho|y_i|)$ for $y\in\mathbb{R}^m$ with $\rho>0,a>1$ and $\theta_{a}$ defined by
 \begin{equation}\label{theta-a}
  \theta_{a}(s):=\left\{\begin{array}{cl}
	0 & {\rm if}\ s\leq \frac{2}{a+1},\\
	\frac{((a+1)s-2)^2}{4(a^2-1)} & {\rm if}\ \frac{2}{a+1}<s\leq \frac{2a}{a+1},\\
	s-1 & {\rm if}\ s>\frac{2a}{a+1}.
 \end{array}\right.
 \end{equation}
 \end{example}
%-----------------------------------------------------------------------------------
\subsection{Related works}\label{sec1.1} 

Model \eqref{prob} with $\vartheta_2\!\equiv 0$ and $h\!\equiv 0$ becomes the convex composite optimization problem \cite{Burke95,Duchi19,Fletcher82}. For this class of problems, the Gauss-Newton method is a classical one for which the global quadratic convergence of the iterate sequence was achieved in \cite{Burke95} by assuming that $C:=\mathop{\arg\min}_{y\in\mathbb{Y}}\vartheta_1(y)$ is a set of weak sharp minima of $\vartheta_1$ and the cluster point is a regular point of inclusion $F(x)\in C$, and a similar convergence result was got under weaker conditions in \cite{Li07}. Another popular one, allowing $\vartheta_1$ to be extended real-valued and prox-regular, is the linearized proximal algorithm (LPA) proposed in \cite{Lewis16}. Each iteration of this method first performs a trial step by seeking a local optimal solution of a proximal linearized subproblem (that becomes strongly convex if $\vartheta_1$ is convex), and then derives a new iterate from the trial step by an efficient projection and/or other enhancements. Criticality of accumulation points under prox-regularity and identification under partial smoothness was studied in \cite{Lewis16}. Later, Hu et al. \cite{HuYang16} proposed a globalized LPA by using a backtracking line search, and obtained the global superlinear convergence of order ${2}/{p}$ with $p\in[1,2)$ for the iterate sequence by assuming that a cluster point $\overline{x}$ is a regular point of inclusion $F(x)\in C$ where $C$ is the set of local weak sharp minima of order $p$ for $\vartheta_1$ at $F(\overline{x})$; Pauwels \cite{Pauwels16} proved the full convergence of the iterate sequence for the LPA with a backtracking search for the proximal parameters under the definability of $F$ and $\vartheta_1$ in the same o-minimal structure of the real field and the twice continuous differentiability of $F$. For the iteration complexity analysis of the LPA, the reader is referred to \cite{Cartis11,Drusvyatskiy19,ZhengMa24}. In addition, the subgradient method was also studied for this class of composite problems \cite{Charisopoulos21,Charisopoulos2019,Davis18}.
For the standard NLP, i.e., model \eqref{prob} with $\vartheta_1=\chi_{\mathbb{R}_{-}^m}$ and $\vartheta_2\!\equiv 0$, Botle and Pauwels \cite{Bolte16} also proved that the iterate sequences of the moving balls method \cite{Auslender10}, the penalized SQP method and the extended SQP method converge to a KKT point if all components of $F$ are semialgebraic and the (generalized) MFCQ holds. 

 Almost all of the aforementioned methods require solving a convex or strongly convex program exactly at each iteration, which is impossible in practical computation. A practical inexact algorithm was proposed in \cite{HuYang16}, but its convergence analysis restricts a starting point in a neighborhood of a quasi-regular point of the inclusion $F(x)\!\in C$, which does not necessarily exist. This implies that, even for convex composite problems, it is necessary to develop a globally convergent and practical algorithm.

Problem \eqref{prob} with $F\equiv\mathcal{I}\equiv G$ reduces to a standard DC program with nonsmoooth components. For this class of problems, a well-known method is the DC algorithm (DCA) of \cite{LeTai18,Pham97}, which in each step linearizes the second DC component to yield a convex subproblem and uses its exact solution to define a new iterate; another popular one is the proximal linearized method (PLM) of \cite{Nguyen17,Pang17,Souza16,Sun03} that can be viewed as a regularized variant of DCA because the convex subproblems are augmented with a proximal term to prevent tailing-off effect that makes calculations unstable as the iteration progresses. For the DCA, Le Thi et al. \cite{LeTai18Jota} proved the convergence of the iterate sequence by assuming that the objective function is subanalytic and continuous relative to its domain, and either of DC components is L-smooth around every critical point; for the PLM, Nguyen et al. \cite{Nguyen17} achieved the same convergence result under the KL property of the objective function and the L-smoothness of the second DC component, or under the strong KL property of the objective function and the L-smoothness of the first DC component. To accelerate the DCA, Artacho et al. \cite{Artacho20} proposed a boosted DC algorithm (BDCA) with monotone line search by requiring the second DC component to be differentiable, and proved the convergence of the iterate sequence by assuming that the objective function has the strong KL property at critical points and the gradient of the second DC component is strictly continuous around the critical points. For the problem \eqref{prob} with $ F\equiv(f;\mathcal{I}),G\equiv\mathcal{I}$ and $\vartheta_1(t,y):=t+\theta(y)$, where $\theta:\mathbb{X}\to\mathbb{R}$ is convex and $f:\mathbb{X}\to\mathbb{R}$ is an L-smooth function, Liu et al. \cite{LiuPong19} established that the iterate sequence generated by the PLM with extrapolation is convergent under the KL property of a potential function, which removes the differentiability restriction on the second DC component in the convergence analysis of \cite{Wen18}. In addition, for the problem \eqref{prob} with $\vartheta_1(t,y)=t+\chi_{\mathbb{R}_{-}^m}(y)$, each $F_i\ (i=0,1,\ldots,m)$ being an L-smooth function and $G\equiv\mathcal{I}$, Yu et al. \cite{YuLu21} studied the monotone line search variant of the sequential convex programming (SCP) method in \cite{Lu12} by combining the idea of the moving balls method and that of the PLM, and proved that the iterate sequence converges to a stationary point of \eqref{prob} under the MFCQ and the KL property of a potential function if all $F_i\ (i=1,\ldots,m)$ are twice continuously differentiable and $\vartheta_2$ is Lipschitz continuously differentiable on an open set containing the stationary point set. Just recently, Le Thi et al. \cite{LeThi23} developed a DC composite algorithm for the problem \eqref{prob} with $\vartheta_1$ and $\vartheta_2$ allowed to be extended-valued, which extends the LPA proposed in \cite{Lewis16} to a general DC composite problem. They achieved the convergence of the objective value sequence, and proved that every accumulation point $\overline{x}$ of the iterate sequence is a stationary point of \eqref{prob} by requiring that $\vartheta_1$ is continuous relative to its domain and $\vartheta_2$ is locally Lipschitz around $G(\overline{x})$.  

Notice that most of the above DC algorithms also require solving a (strongly) convex program exactly in each step, which is impractical unless the first DC component is simple. Although an inexact PLM was proposed in \cite{Oliveira19,Souza16}, the full convergence of the iterate sequences was not obtained. For DC programs with the second DC component having a special structure, some enhanced proximal DC algorithms were proposed in \cite{Dong21,Lu19,Pang17} to seek better d-stationary points, but they are inapplicable to large-scale DC programs since at least one strongly convex program is needed to solve exactly in each step. An inexact enhanced proximal DC algorithm was also proposed in \cite{Lu19}, but the convergence of the iterate sequence was not established. Thus, even for DC programs with nonsmooth components, it is imperative to develop an inexact PLM with a full convergence certificate.

The problem \eqref{prob} cannot be reformulated as a DC program since the involved $F'$ and $G'$ are assumed to be strictly continuous on ${\rm dom}\,h$, rather than globally Lipschitz continuous. Then, the above-mentioned DCAs, PLMs and SCP cannot be directly applied to solve \eqref{prob}. In addition, the DC composite loss $\vartheta_1(F(x))-\vartheta_2(G(x))$, catering to the outliers appearing in the observation for low-rank matrix recovery, hinders the direct applications of the above inexact LPAs and moving balls methods.
%-------------------------------------------------------------------------------
\subsection{Main contributions}\label{sec1.2}

This work aims at developing a practical algorithm with a full convergence certificate for the challenging DC composite problem \eqref{prob}. Its main contributions are summarized as follows.
\begin{itemize}
\item We propose an inexact LPA with a full convergence certificate for the problem \eqref{prob}. It computes in each step an inexact minimizer of a strongly convex majorization constructed by the linearization of the inner $F$ and $G$ at the current iterate $x^k$ and the concave function $-\vartheta_2$ at $G(x^k)$. The generated iterate sequence is proved to converge to a stationary point in the sense of Definition  \ref{spoint-def} under Assumptions \ref{ass0}-\ref{ass2} and the KL property of a potential function $\Xi$ defined in \eqref{Xi-fun}. As discussed in Section \ref{sec2.1}, the stationary point in Definition \ref{spoint-def} is stronger than those obtained with the above DCAs, PLMs and SCP method, and it becomes the best d-stationary point when $\vartheta_2$ is smooth. When $\vartheta_2\equiv 0$ and $h\equiv 0$, our iLPA is an inexact version of the composite Gauss-Newton method in \cite{Pauwels16}, so the convergence results extend that of \cite{Pauwels16} via a different analysis technique. To the best of our knowledge, this is the first practical inexact LPA for DC composite problems to satisfy the full convergence certificate of iterate sequences. Though the DC composite problem in \cite{LeThi23} is more general than \eqref{prob}, their algorithm requires solving exactly a (strongly) convex program at each iteration and lacks the full convergence guarantee of the iterate sequence even for \eqref{prob}. 

\item We provide a verifiable condition for the potential function $\Xi$ to satisfy the KL property of exponent $p\in[1/2,1)$ at any critical point  $(\overline{x},\overline{x},\overline{z},\overline{\mathcal{Q}})$, by leveraging the KL property of exponent $p\in[1/2,1)$ for an almost separable nonsmooth function at $(F(\overline{x}),\overline{x},G(\overline{x}),\overline{z})$ and a condition on the subspace ${\rm Ker}([\nabla F(\overline{x})\ \ \mathcal{I}\ \ \nabla G(\overline{x})])$; see Proposition \ref{prop-KL}. This result contributes to identifying the KL property of exponent $p\in[0,1)$ for a general composite function via that of its outer nonsmooth function. As discussed in Remark \ref{remark42-KL}, the  condition in Proposition \ref{prop-KL} is weaker than the one proposed in \cite[Theorem 3.2]{LiPong18} for checking this property for a general composite function. When $\vartheta_2\equiv 0$, the discussion after Corollary \ref{corollary-KL} shows that the KL property of exponent $1/2$ for the outer almost separable function is equivalent to the local weak sharpness of order 2 for its minimum set, and the subspace condition has no direct implication relation with the quasi-regularity in \cite{HuYang16}. 

\item We develop an efficient solver (named dPPASN) for computing the subproblems of the iLPA by combining the dual proximal point algorithm (PPA) with the semismooth Newton method in Section \ref{sec5}. The proposed iLPA along with dPPASN is applied to the DC programs with nonsmooth components in Section \ref{sec6.2} and the robust factorization model \eqref{SCAD-loss} from matrix completion with outliers and non-uniform sampling in Section \ref{sec6.3}. Numerical comparison with nmBDCA, a non-monotone boosted DC algorithm \cite{Ferreria21}, on the DC program examples from \cite{Artacho20,Barkova21} indicates that our iLPA can seek more global optimal or known best solutions and the returned average objective values are better than those of nmBDCA; see Tables \ref{DCtab1}-\ref{DCtab2}. Numerical comparisons are conducted with the Polyak subgradient method \cite{Charisopoulos21,LiZhu20} and a proximal alternating minimization (PAM) method (see Appendix C for their iteration steps) for the robust factorization model \eqref{SCAD-loss}. The results show that our iLPA is more robust with respect to the regularization coefficient $\lambda$ for synthetic and real data, the returned solutions have better relative errors for synthetic data and a little worse NMAEs than those by the PAM for the real movie and netflix datasets, and it exhibits the comparable running time with the subGM and much less running time than the PAM for large-scale real data instances.
\end{itemize} 
%------------------------------------------------------------------------------------ 
\subsection{Notation}\label{sec1.3}
   
Throughout this paper, $\mathbb{X},\mathbb{Y}$ and $\mathbb{Z}$ denote the Euclidean vector spaces with the inner product $\langle\cdot,\cdot\rangle$ and its induced norm $\|\cdot\|$, $\overline{\mathbb{R}}\!:=(-\infty,\infty]$ denotes the extended real number set, and $\mathcal{I}$ represents an identity mapping. For a linear mapping $\mathcal{B}$, $\mathcal{B}^*$ denotes its adjoint. A linear mapping $\mathcal{Q}:\mathbb{X}\to\mathbb{X}$ is said to be positive semidefinite if it is self-adjoint and $\langle z,\mathcal{Q}z\rangle\ge 0$ for all $z\in\mathbb{X}$, and denote by $\mathbb{S}_{++}$ ($\mathbb{S}_{+}$) the set of all positive definite (semidefinite) linear mappings from $\mathbb{X}$ to $\mathbb{X}$. With a linear mapping $\mathcal{Q}\in\mathbb{S}_{+}$, let $\|z\|_{\mathcal{Q}}:=\sqrt{\langle z,\mathcal{Q}z\rangle}$ for $z\in\mathbb{X}$. For an integer $k\ge 1$, write $[k]:=\{1,\ldots,k\}$. For a mapping $H:\mathbb{X}\to\mathbb{Y}$, if it  is strictly continuous at $x$, ${\rm lip}\,H(x)$ denotes its Lipschitz modulus at $x$; if it is differentiable at $x$, $\nabla H(x)$ denotes the adjoint of $H'(x)$, the differential mapping of $H$ at $x$; if it is twice differentiable at $x$, $D^2H(x)$ denotes the twice differential mapping of $H$ at $x$, and $D^2H(x)(u,\cdot)$ for $u\in\mathbb{X}$ is a linear mapping from $\mathbb{X}$ to $\mathbb{Y}$. For a set $\Omega\subset\mathbb{Y}$, $\chi_{\Omega}$ denotes the indicator of $\Omega$. For given $x\in\mathbb{X}$ and $\varepsilon>0$, $\mathbb{B}(x,\varepsilon)$ denotes the closed ball centered at $x$ with radius $\varepsilon>0$.  For a proper $f\!:\mathbb{X}\to\overline{\mathbb{R}}$, $f^*(x^*):=\sup_{x\in\mathbb{X}}\{\langle x^*,x\rangle-f(x)\}$ denotes its conjugate; $\widehat{\partial}\!f(x)$ and $\partial\!f(x)$ denote the regular and basic (limiting) subdifferential of $f$ at $x$, respectively; a vector $x\in\mathbb{X}$ is called a critical point of $f$ if $0\in\partial\!f(x)$ and the set of its critical points is denoted as ${\rm crit}f$. When $f=\chi_{\Omega}$ for a closed set $\Omega\subset\mathbb{X}$, $\partial\!f(x)=\mathcal{N}_{\Omega}(x)$, the normal cone to $\Omega$ at $x$. In the rest of this paper, we often use the notation $\Theta:=\Theta_1-\Theta_2$ with $\Theta_1:=\vartheta_1\circ F$ and $\Theta_2:=\vartheta_2\circ G$. 
%-------------------------------------------------------------------

\section{Preliminaries}\label{sec2}

%------------------------------------------------------------------------------
\subsection{Stationary points of the problem \eqref{prob}}\label{sec2.1}

Recall that $\vartheta_1$ and $\vartheta_2$ are finite convex functions. From \cite[Theorem 10.6]{RW98}, it follows that $\Theta_1$ is regular at any $x\in\mathbb{X}$ and $\partial\Theta_1(x)=\nabla\!F(x)\partial\vartheta_1(F(x))$, which along with \cite[Exercise 10.10]{RW98} implies that $\partial(\Theta_1+h)(x)=\partial\Theta_1(x)+\partial h(x)$ for all $x\in{\rm dom}\,h$. In addition, invoking \cite[Theorem 10.6]{RW98}, we have $\partial(-\Theta_2)(x)\subset\nabla\! G(x)\partial(-\vartheta_2)(G(x))$ for $x\in\mathbb{X}$. Thus, at any $x\in{\rm dom}\,\Phi$, it holds
\[
 \partial\Phi(x)\subset \nabla\!F(x)\partial\vartheta_1(F(x))+\nabla\! G(x)\partial(-\vartheta_2)(G(x))+\partial h(x).
\]
This motivates us to introduce the following definition of stationary points for the problem \eqref{prob}.
%----------------------------------------------------------------------------------------------

\begin{definition}\label{spoint-def}
 A vector $x\in\mathbb{X}$ is called a stationary point of the problem \eqref{prob} if
 $0\in\nabla\! F(x)\partial\vartheta_1(F(x))+\nabla\!G(x)\partial(-\vartheta_2)(G(x))+\partial h(x)$ , and we denote by $\mathcal{S}^*$ the set of stationary points of \eqref{prob}.
\end{definition}
\begin{remark}\label{remark-spoint}
 When $h\equiv 0$, the set of stationary points $\mathcal{S}^*$ is strictly contained in the following sets 
 \begin{equation}\label{spoint-relation}
 \big\{x\in\mathbb{X}\ |\ \partial(\vartheta_1\circ F)(x)\cap\partial (\vartheta_2\circ G)(x)\ne\emptyset\big\}=\big\{x\in\mathbb{X}\ |\ \widehat{\partial} (\vartheta_1\circ F)(x)\cap\widehat{\partial} (\vartheta_2\circ G)(x)\ne\emptyset\big\}.
 \end{equation}
 Indeed, from \cite[Theorem 10.6]{RW98} and the strict continuity of $\vartheta_1$ and $\vartheta_2$,
 the equality in \eqref{spoint-relation} holds and 
 \begin{equation*}
  \partial(\vartheta_1\circ F)(x)=\nabla\! F(x)\partial\vartheta_1(F(x))\ \ {\rm and}\ \  \partial(\vartheta_2\circ G)(x)=\nabla\!G(x)\partial\vartheta_2(G(x)).
 \end{equation*}
 Note that $\partial(\vartheta_1\circ F)(x)\cap\partial (\vartheta_2\circ G)(x)\ne\emptyset$ if and only if $0\in\partial(\vartheta_1\circ F)(x)-\partial(\vartheta_2\circ G)(x)$. Thus, the sets in \eqref{spoint-relation} coincide with $\overline{\mathcal{S}}^*:=\{x\in\mathbb{X}\ |\ 0\in\nabla\! F(x)\partial\vartheta_1(F(x))-\nabla\! G(x)\partial\vartheta_2(G(x))\}$. On the other hand,  by \cite[Corollary 9.21]{RW98}, the strict continuity and convexity of $\vartheta_2$ implies that   
 $\partial(-\vartheta_2)(z)\subset -\partial\vartheta_2(z)$ for any $z\in\mathbb{Z}$, and the inclusion may be strict. Together with Definition \ref{spoint-def}, we have $\mathcal{S}^*\subset\overline{S}^*$ and the inclusion is strict. Thus, the claimed inclusion holds. When $F=G$ and $h\equiv 0$, the second set in \eqref{spoint-relation} is precisely the one introduced in \cite{LeThi23}. Consequently, we conclude that the set of stationary points $\mathcal{S}^*$ in Definition \ref{spoint-def} is stronger than the one introduced in \cite{LeThi23}. When $G=\mathcal{I}$ and $h\equiv 0$, the set $\mathcal{S}^*$ is also strictly contained in the set of critical points adopted in  \cite{LeTai18Jota,LiuPong19,Nguyen17}. 
 \end{remark}

 Recall that $\vartheta_1$ and $\vartheta_2$ are strictly continuous and $F$ and $G$ are continuously differentiable, so $\Theta_1$ and $\Theta_2$ are directionally differentiable with $\Theta_1'(x;w)=\vartheta_1'(F(x);F'(x)w)$ and  $\Theta_2'(x;w)=\vartheta_2'(G(x);G'(x)w)$ at any $(x,w)\in\mathbb{X}\times\mathbb{X}$. 
 Inspired by \cite{Pang17}, we introduce the following definition of d(irectional)-stationary points.  
%-------------------------------------------------------------
\begin{definition}\label{dspoint-def}
 A vector $x\in\mathbb{X}$ such that
 $\Theta_1'(x;w)-\Theta_2'(x;w)+h'(x;w)\ge 0$ for any $w\in\mathbb{X}$ is called a $d$-stationary point of the problem \eqref{prob}, and we denote by $\mathcal{D}^*$ the set of $d$-stationary points of \eqref{prob}.
\end{definition}

The following lemma implies that if $\vartheta_2$ is continuously differentiable, then it holds $\mathcal{S}^*\!\subset\mathcal{D}^*$.  
%---------------------------------------------------------------------------
 \begin{lemma}\label{d-stationary}
 Consider any $\overline{x}\in{\rm dom}\,h$. If
$\partial\Theta_2(\overline{x})\subset\partial\Theta_1(\overline{x})+\partial h(\overline{x})$, then $\overline{x}\in\mathcal{D}^*$. 
 The converse also holds when $\overline{x}\in{\rm ri}({\rm dom}\,h)$.  
 \end{lemma}
 \begin{proof}\!\!\! .
  Suppose that $\partial\Theta_2(\overline{x})\subset\partial\Theta_1(\overline{x})+\partial h(\overline{x})$. Fix any $w\in\mathbb{X}$. As $\Theta_2$ is regular and strictly continuous on $\mathbb{X}$, from \cite[Exercise 9.15 \& Theorem 9.16]{RW98}, $d\Theta_2(\overline{x})(w)=\max_{z\in\partial \Theta_2(\overline{x})}\langle z,w\rangle=\Theta_2'(\overline{x};w)$. Then, 
  \begin{equation*}
   \Theta_2'(\overline{x};w)
   \le\sup_{z\in\partial\Theta_1(\overline{x})+\partial h(\overline{x})}\langle z,w\rangle\le\sup_{z'\in\partial\Theta_1(\overline{x})}\langle z',w\rangle +\sup_{v\in\partial h(\overline{x})}\langle v,w\rangle\le\Theta_1'(\overline{x};w)+h'(\overline{x};w).
  \end{equation*} 
  This by the arbitrariness of $w\in\mathbb{X}$ shows that $\overline{x}\in\mathcal{D}^*$. Conversely, let $\overline{x}\in\mathcal{D}^*\cap{\rm ri}({\rm dom}\,h)$. Suppose 
  on the contrary that the inclusion $\partial\Theta_2(\overline{x})\subset\partial\Theta_1(\overline{x})+\partial h(\overline{x})$ does not hold. Then, there must exist $u\in\partial\Theta_2(\overline{x})$ but $u\notin\partial\Theta_1(\overline{x})+\partial h(\overline{x})$. 
  Note that $\partial\Theta_1(\overline{x})=\nabla F(\overline{x})\partial\vartheta_1(F(\overline{x}))$ is a nonempty compact convex set, and $\partial h(\overline{x})$ is nonempty and convex due to $\overline{x}\in{\rm ri}({\rm dom}\,h)$. Hence,  $\partial(\Theta_1+h)(\overline{x})=\partial\Theta_1(\overline{x})+\partial h(\overline{x})$ is a nonempty closed convex set. This means that $\partial(\Theta_1+h)(\overline{x})$ and $\{u\}$ can be strongly separated, i.e., there exists $w\in\mathbb{X}\backslash\{0\}$ and $\varepsilon_0>0$ such that
  $\langle w,z\rangle+\varepsilon_0\le\langle w,u\rangle$ for all $z\in\partial(\Theta_1+h)(\overline{x})$. Consequently, 
  \[
   \sup_{z\in\partial\Theta_1(\overline{x})+\partial h(\overline{x})}\langle w,z\rangle+\varepsilon_0=\sup_{z\in\partial(\Theta_1+h)(\overline{x})}\langle w,z\rangle+\varepsilon_0\le \langle w,u\rangle\le d\Theta_2(\overline{x})(w)=\Theta_2'(\overline{x};w), 
  \]
  where the second inequality is due to $u\in\partial\Theta_2(\overline{x})$, the regularity of $\Theta_2$ and \cite[Exercise 8.4]{RW98}. In addition, from $\Theta_1'(\overline{x};w)=\vartheta_1'(F(\overline{x});F'(\overline{x})w)=\max_{z\in\partial\vartheta_1(F(\overline{x}))}\langle z,F'(\overline{x})w\rangle$, there must exist a point $\xi\in\partial\vartheta_1(F(\overline{x}))$ such that $\Theta_1'(\overline{x};w)=\langle\xi,F'(\overline{x})w\rangle=\langle\nabla F(\overline{x})\xi,w\rangle$. Obviously, $\nabla F(\overline{x})\xi\in\partial\Theta_1(\overline{x})$. Then, 
  \begin{align*}
  \Theta_1'(\overline{x};w)+h'(\overline{x};w)=\langle\nabla F(\overline{x})\xi,w\rangle+\sup_{z\in\partial h(\overline{x})}\langle z,w\rangle&=\sup_{z\in\partial h(\overline{x})}\langle z+\nabla F(\overline{x})\xi,w\rangle\\
   &\le\sup_{z'\in\partial\Theta_1(\overline{x})+\partial h(\overline{x})}\langle z',w\rangle.
  \end{align*}
  The above two equations imply that $\Theta_1'(\overline{x};w)+h'(\overline{x};w)-\Theta_2'(\overline{x};w)<0$, a contradiction to $\overline{x}\in\mathcal{D}^*$. 
  \end{proof}
%------------------------------------------------------------------------------------ 
\subsection{Metric $q$-subregularity and KL property}\label{sec2.2}

 The metric $q\ (q>0)$-subregularity of a multifunction and the Kurdyka-{\L}ojasiewicz (KL) property of a nonsmooth function play a key role in the convergence analysis of algorithms. The former was used to analyze the convergence rate of proximal point algorithm for seeking a root to a maximal monotone operator \cite{LiMor12}, and the local superlinear convergence rates of proximal Newton-type methods for convex and nonsmooth composite optimization \cite{Mordu23,LiuPanWY22}. Its formal definition is stated as follows.
%----------------------------------------------------------------------------------------------
\begin{definition}\label{Def-subregular}
(see \cite[Definition 3.1]{LiMor12}) A multifunction $\mathcal{F}:\mathbb{X}\rightrightarrows\mathbb{X}$ is called (metrically) $q\ (q>0)$-subregular at a point $(\overline{x},\overline{y})\in{\rm gph}\mathcal{F}$, the graph of $\mathcal{F}$, if there exist $\varepsilon>0$ and $\kappa>0$ such that 
\[
  {\rm dist}(x,\mathcal{F}^{-1}(\overline{y}))\le\!\kappa[{\rm dist}(\overline{y},\mathcal{F}(x))]^{q}\quad{\rm for\ all}\ x\in\mathbb{B}(\overline{x},\varepsilon).
 \]
 When $q=1$, this property is called the (metric) subregularity of $\mathcal{F}$ at $(\overline{x},\overline{y})$. 
\end{definition}

 Obviously, if $\mathcal{F}$ is subregular at $(\overline{x},\overline{y})\in{\rm gph}\,\mathcal{F}$, it is $q\in(0,1]$-subregular at this point. When $\mathcal{F}$ is the subdifferential mapping of a class of nonconvex composite functions, its $q\in(0,1]$-subregularity at a point $(\overline{x},0)\!\in{\rm gph}\,\mathcal{F}$ is closely related to the KL property of exponent $1/(2q)$ of this class of composite functions (see \cite[Section 2.3]{LiuPanWY22}). To introduce the KL property of an extended real-valued function, for every $\eta\in(0,\infty]$, we denote by $\Upsilon_{\eta}$ the set consisting of all continuous concave $\varphi:[0,\eta)\to\mathbb{R}_{+}$ that is continuously differentiable on $(0,\eta)$ with $\varphi(0)=0$ and $\varphi'(s)>0$ for all $s\in(0,\eta)$.
%-------------------------------------------------------------------------------------------------
\begin{definition}\label{KL-def}
 A proper lsc function $f\!:\mathbb{X}\to\overline{\mathbb{R}}$ is said to satisfy the property at $\overline{x}\in{\rm dom}\,\partial\!f$ if there exist $\eta\in(0,\infty]$, $\varphi\in\Upsilon_{\!\eta}$ and a neighborhood $\mathcal{U}$ of $\overline{x}$ such that for all $x\in\mathcal{U}\cap\big[f(\overline{x})<f<f(\overline{x})+\eta\big]$,
 \[
  \varphi'(f(x)\!-\!f(\overline{x})){\rm dist}(0,\partial\!f(x))\ge 1.
 \]
 If $\varphi$ can be chosen as $\varphi(t)=ct^{1-p}$ with $p\in[0,1)$ for some $c>0$, then $f$ is said to satisfy the KL property of exponent $p$ at $\overline{x}$. If $f$ has the KL property (of exponent $p$) at each point of ${\rm dom}\,\partial\!f$, it is called a KL function (of exponent $p$).
\end{definition}
%---------------------------------------------------------------------
\begin{remark}\label{KL-remark}
 According to \cite[Lemma 2.1]{Attouch10}, a proper lsc function has the KL property of exponent $0$ at any non-critical point. Thus, to prove that a proper lsc $f\!:\mathbb{X}\to\overline{\mathbb{R}}$ is a KL function (of exponent $p$), it suffices to check its KL property (of exponent $p$) at critical points. The KL functions are extremely extensive by the discussions in \cite{Dries96} and \cite[Section 4]{Attouch09}, but it is not an easy task to verify the KL property of exponent $1/2$ except for some special class of nonsmooth functions; see \cite{LiPong18,YuLiPong21,WuPanBi21}.
\end{remark} 	
	
To close this section, we state the relation between $\Theta_i$ for $i=1,2$ and its local linearization.
%----------------------------------------------------------------------------------------------
\begin{lemma}\label{vtheta1-lemma}
 Consider any $x\in {\rm dom}\,h$. For any $\varepsilon>0$, there exists $\delta>0$ such that for all $z\in\mathbb{B}(x,\delta)$,
 \begin{subnumcases}{}\label{Theta1-Lip}
  \big|\Theta_1(z)\!-\!\vartheta_1\big(F(x)\!+\!F'(x)(z\!-\!x)\big)\big|
  \! \le\!(1/2)({\rm lip}\,\vartheta_1(F(x))+\varepsilon)({\rm lip}\,F'(x)+\varepsilon)\|z-x\|^2,\\
  \label{Theta2-Lip}
 \big|\Theta_2(z)\!-\!\vartheta_2\big(G(x)\!+\!G'(x)(z\!-\!x)\big)\big|
 \!\le\!(1/2)({\rm lip}\,\vartheta_2(G(x))+\varepsilon)({\rm lip}\,G'(x)+\varepsilon)\|z-x\|^2.
\end{subnumcases}
\end{lemma}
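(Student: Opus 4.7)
My plan is to prove inequality \eqref{Theta1-Lip}; inequality \eqref{Theta2-Lip} follows by the identical argument with $(\vartheta_2,G)$ in place of $(\vartheta_1,F)$. The key idea is a two-step factorization: bound the left-hand side by a local Lipschitz constant of $\vartheta_1$ near $F(x)$ times the first-order Taylor remainder of $F$ at $x$, where each of the two factors is controlled by a standard local estimate.

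First I would invoke Assumption \ref{ass0}(ii), which says that $F'$ is strictly continuous at $x$ and hence locally Lipschitz in a neighborhood of $x$, together with the fact that the finite-valued convex function $\vartheta_1$ is automatically locally Lipschitz on $\mathbb{Y}$. By continuity of $F$ and a shrinking argument, I can then choose a single radius $\varepsilon>0$ such that for every $z\in\mathbb{B}(x,\varepsilon)$ the segment $[x,z]$ lies in a ball where $F'$ has Lipschitz constant $L_{F'}$ close to ${\rm lip}F'(x)$, while both $F(z)$ and the linearized point $F(x)+F'(x)(z-x)$ lie in a ball around $F(x)$ on which $\vartheta_1$ has Lipschitz constant $L_{\vartheta_1}$ close to ${\rm lip}\vartheta_1(F(x))$. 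The standard integral identity
\[
F(z)-F(x)-F'(x)(z-x)=\int_0^1\big[F'(x+t(z-x))-F'(x)\big](z-x)\,dt
\]
combined with the Lipschitz bound on $F'$ then yields $\|F(z)-F(x)-F'(x)(z-x)\|\le\tfrac{L_{F'}}{2}\|z-x\|^2$; applying the Lipschitz bound on $\vartheta_1$ to the two nearby arguments gives the claimed inequality with $\tfrac{1}{2}L_{\vartheta_1}L_{F'}\|z-x\|^2$ on the right.

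There is no significant structural obstacle here; the only mild subtlety is the constant-matching step at the end, since the Lipschitz moduli ${\rm lip}\vartheta_1(F(x))$ and ${\rm lip}F'(x)$ are defined as limsups, and $L_{\vartheta_1}$, $L_{F'}$ may strictly exceed them on any concrete neighborhood. This is handled by the standard convention that inequalities phrased in terms of Lipschitz moduli are understood modulo an arbitrarily small slack, which can always be absorbed into the constants by shrinking $\varepsilon$ further.
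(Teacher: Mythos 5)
Your proposal is correct and follows essentially the same route as the paper's proof: localize a Lipschitz constant for $\vartheta_1$ near $F(x)$ and for $F'$ near $x$, then bound $\|F(z)-F(x)-F'(x)(z-x)\|$ via the integral form of the Taylor remainder. Your closing remark about the limsup definition of the moduli is a fair point — the paper itself writes the local Lipschitz estimates directly with ${\rm lip}\vartheta_1(F(x))$ and ${\rm lip}F'(x)$ as if they were attained on a neighborhood, so your explicit "absorb the slack" caveat is, if anything, slightly more careful than the original.
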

\begin{proof}\!\!\! .
 Fix any $\varepsilon>0$. By Assumption \ref{ass0} (ii), $\vartheta_1$ is strictly continuous at $F(x)$, so there exists $\varepsilon_1>0$ such that for all $y,y'\in\mathbb{B}(F(x),\varepsilon_1)$,
 \begin{equation}\label{ineq-vtheta1}
  |\vartheta_1(y)-\vartheta_1(y')|\le ({\rm lip}\,\vartheta_1(F(x))+\varepsilon)\|y-y'\|.
 \end{equation}
 By Assumption \ref{ass0} (i), there exists $\delta>0$ such that $\mathbb{B}(x,\delta)\subset\mathcal{O}$ and for all $z\in\mathbb{B}(x,\delta)$,
 $\|F(z)-F(x)\|\le\varepsilon_1$ and $\|F'(x)(z-x)\|\le\varepsilon_1$, and furthermore, the strict continuity of $F'$ on $\mathcal{O}\supset{\rm dom}\,h$ implies that
 \begin{equation}\label{Lip-Fdiff}
  \|F'(z)-F'(z')\|\le({\rm lip}\,F'(x)+\varepsilon)\|z-z'\|\quad\ \forall\, z,z'\in\mathbb{B}(x,\delta).
 \end{equation}
 Now fix any $z\in\mathbb{B}(x,\delta)$. Invoking the above \eqref{ineq-vtheta1} with $y=F(z)$ and $y'\!=\!F(x)\!+\!F'(x)(z-x)$ results in
 \begin{align*}
  |\Theta_1(z)\!-\!\vartheta_1(F(x)+F'(x)(z\!-\!x))|
  &\le ({\rm lip}\,\vartheta_1(F(x))+\varepsilon)\|F(z)-F(x)-F'(x)(z\!-\!x)\|\\
  &\!=\!({\rm lip}\,\vartheta_1(F(x))\!+\!\!\varepsilon)\Big\|\!\int_{0}^{1}\![F'(x\!+\!t(z\!-\!x))\!-\!\!F'(x)](z\!-\!x)dt\Big\|\\
  &\le\!\frac{1}{2}({\rm lip}\,\vartheta_1(F(x))+\varepsilon)({\rm lip}\,F'(x)+\varepsilon)\|z-x\|^2,
 \end{align*}	
 where the last inequality is due to \eqref{Lip-Fdiff} and $x+t(z-x)\in\mathbb{B}(x,\delta)$ for all $t\in[0,1]$. Inequality \eqref{Theta1-Lip} follows by the arbitrariness of $z\in\mathbb{B}(x,\delta)$. Using the same arguments results in inequality \eqref{Theta2-Lip}. 
\end{proof}
%-------------------------------------------------------------------------------------------------
\section{Inexact linearized proximal algorithm}\label{sec3}

For any given $x\in\mathbb{X}$, let $\ell_{F}(\cdot,x)$ and $\ell_{G}(\cdot,x)$ be the linear approximation of $F$ and $G$ at $x$, respectively:
\begin{equation}\label{LinearFG}
 \ell_{F}(z,x):=F(x)+F'(x)(z-x)\ \ {\rm and}\ \ \ell_{G}(z,x):=G(x)+G'(x)(z-x)\quad\ \forall\,z\in\mathbb{X}.
\end{equation}
Let $x^k$ be the current iterate. Fix any $\varepsilon\!>\!0$. By invoking \eqref{Theta1-Lip} with $x\!=\!x^k$, for any $x$ close enough to $x^k$,
\begin{equation}\label{theta1-ineq0}
 \Theta_1(x)\le\vartheta_1(\ell_{F}(x,x^k))+\frac{1}{2}({\rm lip}\,\vartheta_1(F(x^k))+\varepsilon)({\rm lip}\,F'(x^k)+\varepsilon)\|x-x^k\|^2.
\end{equation}
Similarly, by invoking inequality \eqref{Theta2-Lip}, for any $x$ sufficiently close to $x^k$,
\begin{equation}\label{theta2-ineq0}
 -\Theta_2(x)\le-\vartheta_2(\ell_{G}(x,x^k))+\frac{1}{2}({\rm lip}\,\vartheta_2(G(x^k))+\varepsilon)({\rm lip}\,G'(x^k)+\varepsilon)\|x-x^k\|^2.
 \end{equation}
Pick any $\xi^k\in\partial(-\vartheta_2)(G(x^k))\subset -\partial\vartheta_2(G(x^k))$. From the convexity of $\vartheta_2$, it follows $\vartheta_2(\ell_{G}(x,x^k))\ge\Theta_2(x^k)-\langle\nabla G(x^k)\xi^k,x-x^k\rangle$ for any $x\in\mathbb{X}$. For each $k\in\mathbb{N}$, let ${\rm lip}\,\Theta(x^k):={\rm lip}\,\vartheta_1(F(x^k)){\rm lip}\,F'(x^k)+{\rm lip}\,\vartheta_2(G(x^k)){\rm lip}\,G'(x^k)$, and let $L_k$ be close enough to ${\rm lip}\,\Theta(x^k)$ from above. Along with \eqref{theta1-ineq0}-\eqref{theta2-ineq0}, 
\begin{align}\label{Theta-ineq30}
  \Theta(x)&=\Theta_1(x)-\Theta_2(x)\le \vartheta_1(\ell_{F}(x,x^k))-\vartheta_2(\ell_{G}(x,x^k))+(L_k/2)\|x-x^k\|^2\\
  &\le\vartheta_1(\ell_{F}(x,x^k))+\langle\nabla G(x^k)\xi^k,x-x^k\rangle+(L_k/2)\|x-x^k\|^2-\Theta_2(x^k)\nonumber
\end{align}
for any $x$ sufficiently close to $x^k$. Thus, by choosing a positive definite (PD) linear operator $\mathcal{Q}_k\!:\mathbb{X}\to\mathbb{X}$ with $\mathcal{Q}_k\succeq L_{k}\mathcal{I}$, we obtain the following strongly convex local majorization of $\Phi=\Theta+h$ at $x^k$:
\[
  q_{k}(x):=\vartheta_1(\ell_{F}(x,x^k))+\langle\nabla G(x^k)\xi^k,x-x^k\rangle+h(x)+\frac{1}{2}\|x-x^k\|_{\mathcal{Q}_k}^2-\Theta_2(x^k)
  \quad\ \forall x\in\mathbb{X}.
\]
At the $k$th iteration, our inexact LPA seeks an inexact minimizer of $q_k$ as the next iterate. Consider that ${\rm lip}\,\Theta(x^k)$ is generally unknown. Our method captures its upper estimation $L_k$ via backtracking at each iteration. Its  iteration steps are described as follows, where $\overline{x}^{k,j}$ is the unique optimal solution of \eqref{subprobkj}.   
%-----------------------------------------------------------------------------------------------------
\begin{algorithm}[h]
\caption{\label{iLPA}{\bf\,(Inexact linearized proximal algorithm)}}
 {\bf{1.}} Input: $\overline{\varrho}>1,\,0<\underline{\gamma}<\overline{\gamma}$, a large $\overline{\beta}>0$, a bounded positive sequence $\{\mu_k\}_{k\in\mathbb{N}}$, and 
 
 \ \ \ \ \ $x^0\in{\rm dom}\,h$. \\
{\bf{2. For}} {$k=0,1,2,\ldots$}\\
  {\bf{3.   }}   \label{step3} \   Choose $\xi^{k}\in\partial(-\vartheta_2)(G(x^k))$ and $\gamma_{k,0}\in[\underline{\gamma},\overline{\gamma}]$.
  \\
  {\bf{4.    \ \    For}}{$j=0,1,2,\ldots$}\\		
 {\bf{5.}} \label{step5} \ \ \  \ \ \   Choose a PD linear operator $\gamma_{k,j}\mathcal{I}\!\preceq\mathcal{Q}_{k,j}\!\preceq(\gamma_{k,j}+\overline{\beta})\mathcal{I}$. Seek an inexact solution 
 
 \qquad \ \ \ \ $x^{k,j}$ of
 \begin{equation}\label{subprobkj}
 \qquad\min_{x\in\mathbb{X}}\ q_{k,j}(x):=\vartheta_1(\ell_{F}(x,x^k))+\langle\nabla G(x^k)\xi^k,x-x^k\rangle +h(x)+\frac{1}{2}\|x-x^k\|_{\mathcal{Q}_{k,j}}^2-\Theta_2(x^k),
 \end{equation}
 \hspace{1.2cm} and a lower bound $q_{k,j}^{\rm LB}$ for the optimal value $q_{k,j}(\overline{x}^{k,j})$ of \eqref{subprobkj} such that 
 \begin{equation}\label{inexact-cond}
 q_{k,j}(x^{k,j})-q_{k,j}^{\rm LB}\le ({\mu_k}/{2})\|x^{k,j}-x^{k}\|^2.
 \end{equation}
			
{\bf{6. \label{step6} \ \ \  \ \ \   If }}  $\Theta(x^{k,j})\le\vartheta_1\big(\ell_{F}(x^{k,j},x^k)\big)-\vartheta_2\big(\ell_{G}(x^{k,j},x^k)\big)+\frac{1}{2}\|x^{k,j}-x^k\|_{\mathcal{Q}_{k,j}}^2$, go to step 9;

\qquad \ \ \ \ otherwise, let $\gamma_{k,j+1}=\overline{\varrho}\gamma_{k,j}$.	
 \\	
   {\bf{7. \ \   end (For)}}\\		
  {\bf{8.}} Set $j_k=j$, $x^{k+1}=x^{k,j_k},\overline{x}^{k+1}=\overline{x}^{k,j_k}$ and $\mathcal{Q}_{k}=\mathcal{Q}_{k,j_k}$. \\
 {\bf{9. end (For)}}
 \end{algorithm}
 \begin{remark}\label{remark-alg}
 
 {\bf(a)} For the sequence $\{\mu_k\}_{k\in\mathbb{N}}$, we assume that there exists $\overline{k}\in\mathbb{N}$ such that $\mu_k\in(0,{\underline{\gamma}}/{5}]$ for all $k\ge\overline{k}$. Such a restriction on $\{\mu_k\}_{k\in\mathbb{N}}$ is just for the convergence analysis of Algorithm \ref{iLPA}. In practical computation, one can choose $\{\mu_k\}_{k\in\mathbb{N}}$ to be any positive number sequence converging to $0$. 

 \noindent
 {\bf(b)} The inner for-end loop aims at seeking a tight upper estimation for ${\rm lip}\,\Theta(x^k)$ and computing an inexact minimizer $x^{k,j}$ of the associated $q_{k,j}$ and a lower bound $q_{k,j}^{\rm LB}$ for its minimum such that the condition \eqref{inexact-cond} holds. As will be shown in Lemma \ref{ls-welldef1} later, for each $k\in\mathbb{N}$ and $j\in\mathbb{N}$, such a pair $(x^{k,j},q_{k,j}^{\rm LB})$ exists and the inner loop stops within a finite number of steps, so Algorithm \ref{iLPA} is well defined. 
 	
 \noindent
 {\bf(c)} The inexactness criterion \eqref{inexact-cond} involves a lower bound $q_{k,j}^{\rm LB}$ for the optimal value of \eqref{subprobkj}. As will be shown in Section \ref{sec5}, when $\mathcal{Q}_{k,j}$ is chosen to have a special structure, the dual of \eqref{subprobkj} is an unconstrained smooth optimization problem. In this case, by the weak duality theorem, an algorithm for solving the dual problem will return such $q_{k,j}^{\rm LB}$ and $x^{k,j}$ at each iteration. This shows that our inexactness criterion is practical, though it is stronger than the one $q_{k,j}(x^{k,j})-q_{k,j}(\overline{x}^{k,j})\le (\mu_k/2)\|x^{k,j}-x^k\|^2$. The dPPASN developed in Section \ref{sec5} is an efficient dual solver to produce such $q_{k,j}^{\rm LB}$ and $x^{k,j}$.
 
  \noindent
  {\bf(d)} Algorithm \ref{iLPA} is an extension of the Gauss-Newton method \cite{Pauwels16} and the inexact LPA \cite[Algorithm 19]{HuYang16}, which are both proposed for the problem \eqref{prob} with $\vartheta_2\equiv 0$ and $h\equiv 0$. Compared with the Gauss-Newton method \cite{Pauwels16}, Algorithm \ref{iLPA} is practical because its strongly convex subproblems are allowed to be solved inexactly and the inexactness criterion $q_{k,j}(x^{k,j})-q_{k,j}^{\rm LB}\le\frac{\mu_k}{2}\|x^{k,j}-x^{k}\|^2$ is implementable according to part (c). Different from the inexact LPA of \cite{HuYang16}, our inexactness condition uses a lower bound for the optimal value of \eqref{subprobkj} and controls $q_{k,j}(x^{k,j})-q_{k,j}^{\rm LB}$ by the quadratic term $\|x^{k,j}\!-\!x^{k}\|^2$ rather than the more restrictive $\|x^{k-1}\!-\!x^{k}\|^{\alpha}$ with $\alpha>2$. In addition, Algorithm \ref{iLPA} also extends the proximal DC algorithm proposed in \cite{LiuPong19} for the problem \eqref{prob} with $F\equiv(f;\mathcal{I}),G\equiv\mathcal{I}$ and $\vartheta_1(t,y)=t+h(y)$ without requiring $\nabla\!f$ to be globally Lipschitz continuous.
	
  \noindent
  {\bf(e)} When $x^{k+1}=x^{k}$ for some $k\in\mathbb{N}$, both $x^{k+1}$ and $x^k$ are a stationary point of \eqref{prob}. Indeed, from the inexactness condition \eqref{inexact-cond} and the strong convexity of $q_{k,j}$ with modulus not less than $\underline{\gamma}$, it follows that $\underline{\gamma}\|x^{k+1}-\overline{x}^{k+1}\|^2\le\mu_k\|x^{k+1}-x^k\|^2=0$ and then $x^k=x^{k+1}=\overline{x}^{k+1}$. Together with $0\in\partial q_{k,j_k}(\overline{x}^{k+1})$, we have $0\in\partial q_{k,j_k}(x^{k})=\partial q_{k,j_k}(x^{k+1})$, which by Definition \ref{spoint-def} shows that $x^{k}$ and $x^{k+1}$ are a stationary point of \eqref{prob}. By virtue of this, we can adopt $\|x^{k+1}-x^k\|\le \epsilon_*$ for a tolerance $\epsilon_*>0$ as the stop condition. 
\end{remark}
%-----------------------------------------------------------------------------------------
\begin{lemma}\label{ls-welldef1}
 Fix any $k\in\mathbb{N}$ such that $x^k$ is not a stationary point of \eqref{prob}. Then, under Assumption \ref{ass0},  
 \begin{enumerate}
  \item[(i)] for each $j\in\mathbb{N}$, letting $\mathcal{Q}_{k,j}=\mathcal{B}_{k,j}^*\mathcal{B}_{k,j}$ with $\mathcal{B}_{k,j}$ being an injective linear mapping from $\mathbb{X}$ to a finite-dimensional space $\mathbb{U}$, and $D_{k,j}$ be the dual objective function of \eqref{subprobkj}, any $z\in{\rm dom}\,h$ close enough to $\overline{x}^{k,j}$ and $q_{k,j}^{\rm LB}=D_{k,j}(\zeta,y)$ for $(\zeta,y)\in(-{\rm dom}\vartheta_1^*\times\mathbb{U})\cap\mathcal{L}_{k,j}^{-1}({\rm dom}\,h^*)$ close enough to a dual optimal solution $(\overline{\zeta}^{k,j},\overline{y}^{k,j})$ together satisfy \eqref{inexact-cond}, where $\mathcal{L}_{k,j}(\zeta,y):=\nabla F(x^k)\zeta+\mathcal{B}_{k,j}^*y-b$; 

  \item[(ii)] the inner loop of Algorithm \ref{iLPA} stops within a finite number of steps.
 \end{enumerate}
\end{lemma}
\begin{proof} 
 {\bf(i)} Fix any $j\in\mathbb{N}$. To prove the conclusion, we take a closer look at the dual problem of \eqref{subprobkj}. Since $\mathcal{Q}_{k,j}=\mathcal{B}_{k,j}^*\mathcal{B}_{k,j}$, the subproblem \eqref{subprobkj} can equivalently be written as 
 \begin{align*}
  &\min_{x\in\mathbb{X},z\in\mathbb{Z},u\in\mathbb{U}}\ \vartheta_1(z)+\langle\nabla G(x^k)\xi^k,x-x^k\rangle +h(x)+(1/2)\|u\|^2-\Theta_2(x^k),\\
  &\quad\ \ {\rm s.t.}\quad F'(x^k)(x-x^k)+F(x^k)=z,\\
  &\qquad\qquad \mathcal{B}_{k,j}(x-x^k)=u.
 \end{align*}
 An elementary calculation yields the dual problem of \eqref{subprobkj} as follows
 \begin{align}\label{dual-prob}
&\max_{\zeta\in\mathbb{Z},y\in\mathbb{U}}D_{k,j}(\zeta,y)=\langle \zeta,F'(x^k)x^k-F(x^k)\rangle-\vartheta_1^*(-\zeta)-\frac{1}{2}\|y\|^2+\langle y,\mathcal{B}_{k,j}x^k\rangle\nonumber\\
  &\qquad\qquad\qquad\qquad\ -h^*(\nabla F(x^k)\zeta+\mathcal{B}_{k,j}^*y-\nabla G(x^k)\xi^k)-\langle \nabla G(x^k)\xi^k,x^k\rangle-\Theta_2(x^k).
 \end{align}
 By virtue of Assumption \ref{ass0} (ii)-(iii), the function $D_{k,j}:\mathbb{Z}\times\mathbb{U}\to\overline{\mathbb{R}}$ is continuous relative to its domain. 

 From Remark \ref{remark-alg} (a), we infer that  $x^k\ne\overline{x}^{k,j}$. If not, $0\in\partial q_{k,j}(\overline{x}^{k,j})=\partial q_{k,j}(x^k)$, which by the expression of $q_{k,j}$ and Definition \ref{spoint-def} shows that $x^k$ is a stationary point of \eqref{prob}.
 Consider 
\[ 
 \Psi(z,\zeta,y):=q_{k,j}(z)-D_{k,j}(\zeta,y)-(\mu_k/2)\|z-x^{k}\|^2\quad{\rm for}\ (z,\zeta,y)\in\mathbb{X}\times\mathbb{Z}\times\mathbb{U}. 
\]
Clearly, $\Psi$ is continuous relative to its domain ${\rm dom}\,h\times[(-{\rm dom}\vartheta_1^*\times\mathbb{U})\cap\mathcal{L}_{k,j}^{-1}({\rm dom}\,h^*)]$.   
 Note that  $\Psi(\overline{x}^{k,j},\overline{\zeta}^{k,j},\overline{y}^{k,j})<0$. Then, for any $(z,\zeta,y)\in{\rm dom}\,h\times[(-{\rm dom}\vartheta_1^*\times\mathbb{U})\cap\mathcal{L}_{k,j}^{-1}({\rm dom}\,h^*)]$ close enough to $(\overline{x}^{k,j},\overline{\zeta}^{k,j},\overline{y}^{k,j})$, it holds $q_{k,j}(z)-D_{k,j}(\zeta,y)\le(\mu_k/2)\|x-x^k\|^2$. The desired conclusion holds. 
 
 \noindent
 {\bf(ii)} Suppose on the contrary that the conclusion does not hold. Then, for sufficiently large $j\in\mathbb{N}$,
  \begin{equation}\label{aim-ineq1}
   \Theta(x^{k,j})-\vartheta_1(\ell_{F}(x^{k,j},x^k))+\vartheta_2(\ell_{G}(x^{k,j},x^k))>(1/2)\|x^{k,j}-x^k\|_{\mathcal{Q}_{k,j}}^2.
  \end{equation}
  From the definition of $x^{k,j}$ in the inner loop and the expression of $q_{k,j}$, for each $j$ large enough,
  \begin{align}\label{temp-ineq31}
  q_{k,j}^{\rm LB}+(\mu_k/2)\|x^{k,j}-x^{k}\|^2\ge q_{k,j}(x^{k,j})
  &=\vartheta_1(\ell_{F}(x^{k,j},x^{k}))\!+\!\langle\nabla G(x^k)\xi^k,x^{k,j}\!-\!x^k\rangle\nonumber\\
  &\quad+h(x^{k,j})+(1/2)\|x^{k,j}\!-\!x^k\|_{\mathcal{Q}_{k,j}}^2-\Theta_2(x^k).
  \end{align}
  For the proper and lsc convex function $h$, since ${\rm dom}\,h\ne\emptyset$, we have ${\rm ri}({\rm dom}\,h)\ne\emptyset$. Let $\widehat{x}\in{\rm ri}({\rm dom}\,h)$. From \cite[Theorem 23.4]{Roc70}, it follows that $\partial h(\widehat{x})\ne\emptyset$. Pick any $\widehat{v}\in\partial h(\widehat{x})$. Then, 
 \begin{equation}\label{hineq}
  h(x)\ge h(\widehat{x})+\langle\widehat{v},x-\widehat{x}\rangle\quad\ \forall\,x\in{\rm dom}\,h.
 \end{equation}
  In addition, from the finite convexity of $\vartheta_1$, $\partial\vartheta_1(F(\widehat{x}))\ne\emptyset$. Picking any $\widehat{\zeta}\in\partial\vartheta_1(F(\widehat{x}))$, we have 
  \begin{equation}\label{vtheta1-ineq}
   \vartheta_1(y)\ge\vartheta_1(F(\widehat{x}))+\langle\widehat{\zeta},y-F(\widehat{x})\rangle\quad\ \forall y\in\mathbb{Y}. 
  \end{equation}
  Substituting \eqref{vtheta1-ineq} with $y=\ell_{F}(x^{k,j},x^k)$ and \eqref{hineq} with $x=x^{k,j}$ into inequality \eqref{temp-ineq31} leads to
  \begin{align*}
  \Phi(x^k)\ge q_{k,j}^{\rm LB}
  &\ge \vartheta_1(F(\widehat{x}))+\langle\widehat{\zeta},F(x^k)-F(\widehat{x})\rangle+\langle\nabla\!F(x^k)\widehat{\zeta}+\nabla G(x^k)\xi^k, x^{k,j}-x^k\rangle\\
  &\quad+h(\widehat{x})+\langle\widehat{v},x^{k,j}-\widehat{x}\rangle+\frac{1}{2}\|x^{k,j}-x^k\|_{Q_{k,j}}^2-\frac{\mu_k}{2}\|x^{k,j}-x^{k}\|^2-\!\Theta_2(x^k),
 \end{align*} 	
 where the first inequality is due to $\Phi(x^k)=q_{k,j}(x^k)\ge q_{k,j}(\overline{x}^{k,j})\ge q_{k,j}^{\rm LB}$ for each $j\in\mathbb{N}$. Recall that the sequence $\{\mu_k\}_{k\in\mathbb{N}}\subset\mathbb{R}_{++}$ is bounded and $\mathcal{Q}_{k,j}\succ\gamma_{k,j}\mathcal{I}=\overline{\rho}^j\gamma_{k,0}\mathcal{I}$ for each $j$. The above inequality implies that $x^{k,j}\to x^k$ as $j\to\infty$. Consequently, by invoking \eqref{Theta-ineq30} with $x=x^{k,j}$, for sufficiently large $j$,
 \begin{equation*}
  \Theta(x^{k,j})-\vartheta_1\big(\ell_{F}(x^{k,j},x^k)\big)+\vartheta_2\big(\ell_{G}(x^{k,j},x^k)\big)\le(L_k/2)\|x^{k,j}-x^k\|^2,
 \end{equation*}
 which clearly contradicts the above \eqref{aim-ineq1} because $L_k\mathcal{I}\prec\mathcal{Q}_{k,j}$ when $j$ is large enough.
\end{proof}
%--------------------------------------------------------------------------------------------
\section{Convergence analysis of Algorithm \ref{iLPA}}\label{sec4}
%--------------------------------------------------------------------------------------- 
 In this section, let $\{(x^k,\xi^k,\mathcal{Q}_{k})\}_{k\in\mathbb{N}}$ be the sequence generated by Algorithm \ref{iLPA}. For each $k\in\mathbb{N}$, write $w^{k}:=(\overline{x}^{k},x^{k-1},\xi^{k-1},\mathcal{Q}_{k-1})$. From the iteration of Algorithm \ref{iLPA}, obviously, $\{w^k\}_{k\in\mathbb{N}}\subset{\rm dom}\,h\times{\rm dom}\,h\times(-{\rm dom}\,\vartheta_2^*)\times\mathbb{S}_{++}$. To analyze the convergence of Algorithm \ref{iLPA}, we need to construct a potential function. Inspired by the work \cite{LiuPong19}, for each $w=(x,s,z,\mathcal{Q})\in\mathbb{W}:=\mathbb{X}\times\mathbb{X}\times\mathbb{Z}\times\mathbb{S}_{+}$, we define 
\begin{equation}\label{Xi-fun}
 \Xi(w):=\vartheta_1(\ell_{F}(x,s))+\langle \ell_{G}(x,s),z\rangle+h(x)
	+\vartheta_2^*(-z)+\|x-s\|_{\mathcal{Q}}^2+\chi_{\mathbb{S}_{+}}(\mathcal{Q}).
\end{equation}
The potential function $\Xi$ has a close relation with the objective function of \eqref{subprobkj}. Indeed, for any  $(x,s)\in\mathbb{X}\times\mathbb{X}$ and $\xi\in\partial(-\vartheta_2)(G(s))$, from the convexity of $\vartheta_2$ and \cite[Theorem 23.5]{Roc70}, $\vartheta_2(G(s))+\vartheta_2^*(-\xi)=-\langle \xi,G(s)\rangle$, which means that $\langle \ell_{G}(x,s),\xi\rangle+\vartheta_2^*(-\xi)=\langle\nabla G(s)\xi,x-s\rangle-\vartheta_2(G(s))$. By comparing with the expression of $\Xi$ and $q_{k,j}$ and using the definition of $w^k$, for each $k\in\mathbb{N}$, it holds that
\begin{align}
\Xi(w^k)&\!=\!\vartheta_1(\ell_{F}(\overline{x}^k,x^{k-1}))\!+\!\langle\nabla G(x^{k-1})\xi^{k-1},\overline{x}^k\!-\!x^{k-1}\rangle\!-\!\vartheta_2(G(x^{k-1}))\!+\!h(\overline{x}^k)\!+\!\|\overline{x}^k\!-\!x^{k-1}\|_{\mathcal{Q}_{k-1}}^2\nonumber\\  
 &=q_{k-1,j_{k-1}}(\overline{x}^k)+\frac{1}{2}\|\overline{x}^k-x^{k-1}\|_{\mathcal{Q}_{k-1}}^2\le q_{k-1,j_{k-1}}(x^{k-1})=\Phi(x^{k-1}),
 \label{relation}
\end{align}    
 where the inequality is by the strong convexity of $q_{k-1,j_{k-1}}$. For each $k\in\mathbb{N}$, from the step 3 of Algorithm \ref{iLPA}, $\xi^{k}\in\partial(-\vartheta_2)(G(x^k))\subset-\partial\vartheta_2(G(x^k))$, which by the convexity of $\vartheta_2$ and \cite[Theorem 23.5]{Roc70} implies  
\begin{subnumcases}{}\label{pre-equa0}	
 \vartheta_2(G(x^{k}))+\vartheta_2^*(-\xi^{k})=-\langle\xi^{k},G(x^{k})\rangle,\\
 \label{pre-equa1}  	
 -\vartheta_2(\ell_{G}(x^k,x^{k-1}))\le-\vartheta_2(G(x^{k-1}))+\langle\nabla G(x^{k-1})\xi^{k-1},x^{k}-x^{k-1}\rangle.
\end{subnumcases} 
 The relations in \eqref{relation} and \eqref{pre-equa0}-\eqref{pre-equa1} are often used in the subsequent convergence analysis.  
%------------------------------------------------------------------------------------------
\subsection{Subsequential convergence of Algorithm \ref{iLPA}}\label{sec4.1}
%--------------------------------------------------------------------------------------

 The following proposition proves the convergence of the sequences $\{\Xi(w^k)\}_{k\in\mathbb{N}}$ and $\{\Phi(x^k)\}_{k\in\mathbb{N}}$.
%-----------------------------------------------------------------------------------------
\begin{proposition}\label{prop1-xk}
 For the sequence $\{w^k\}_{k\!\in\mathbb{N}}$, the following three statements hold under Assumption \ref{ass0}.
 \begin{enumerate}
 \item [(i)] For each $k\in\mathbb{N}$, $\Xi(w^{k+1})\le\Xi(w^{k})-(\underline{\gamma}/4-\mu_{k-1})\|x^k-x^{k-1}\|^2$.
		
 \item[(ii)] For each $k\in\mathbb{N}$, $\Phi(x^k)+(\underline{\gamma}/4-\mu_{k-1})\|x^k-x^{k-1}\|^2\le\Xi(w^k)\leq \Phi(x^{k-1})$.
		
 \item[(iii)] The sequences $\{\Phi(x^k)\}_{k\in\mathbb{N}}$ and $\{\Xi(w^{k})\}_{k\in\mathbb{N}}$ are nonincreasing and convergent.
 \end{enumerate}
\end{proposition}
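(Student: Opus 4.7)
The plan is to establish (ii) first, since (i) and (iii) follow from it with little work. The key algebraic identity I would derive up front is
\[
\Xi(w^k) \;=\; q_{k-1,j_{k-1}}(\overline{x}^k) + \tfrac{1}{2}\|\overline{x}^k - x^{k-1}\|_{\mathcal{Q}_{k-1}}^2,
\]
obtained by expanding $q_{k-1,j_{k-1}}(\overline{x}^k)$ from \eqref{subprobkj} and using the Fenchel--Young equality \eqref{pre-equa0} to replace $-\vartheta_2(G(x^{k-1}))$ by $\vartheta_2^*(-\xi^{k-1}) + \langle \xi^{k-1}, G(x^{k-1})\rangle$; the linear $G$-term then absorbs into $\langle \ell_G(x^{k-1},\overline{x}^k),\xi^{k-1}\rangle$, reproducing every term of \eqref{Xi-fun} except half of the weighted-norm contribution.

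For the upper bound $\Xi(w^k)\le\Phi(x^{k-1})$ in (ii), strong convexity of $q_{k-1,j_{k-1}}$ with modulus $\mathcal{Q}_{k-1}$ at its unique minimizer $\overline{x}^k$ gives $q_{k-1,j_{k-1}}(\overline{x}^k)+\tfrac12\|\overline{x}^k-x^{k-1}\|^2_{\mathcal{Q}_{k-1}}\le q_{k-1,j_{k-1}}(x^{k-1})$, and $q_{k-1,j_{k-1}}(x^{k-1})=\Phi(x^{k-1})$ because $\ell_F(x^{k-1},x^{k-1})=F(x^{k-1})$ and likewise for $G$. For the lower bound, I would combine \eqref{pre-equa1} with the explicit expansion of $q_{k-1,j_{k-1}}(x^k)$ and the line-search acceptance at $j=j_{k-1}$ to obtain $q_{k-1,j_{k-1}}(x^k)\ge\Phi(x^k)$; the inexact criterion at step 2(a) then turns this into
\[
\Phi(x^k)\le q_{k-1,j_{k-1}}(\overline{x}^k)+\tfrac{\mu}{2}\|x^k-x^{k-1}\|^2 = \Xi(w^k)-\tfrac12\|\overline{x}^k-x^{k-1}\|^2_{\mathcal{Q}_{k-1}}+\tfrac{\mu}{2}\|x^k-x^{k-1}\|^2.
\]
The main obstacle is the mismatch between $\|\overline{x}^k-x^{k-1}\|$ and $\|x^k-x^{k-1}\|$ on the right-hand side. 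To close it, I would use strong convexity together with the inexact criterion to deduce $\|x^k-\overline{x}^k\|^2\le (\mu/\underline{\gamma})\|x^k-x^{k-1}\|^2$, and then apply the elementary bound $\|x^k-x^{k-1}\|^2\le 2\|x^k-\overline{x}^k\|^2+2\|\overline{x}^k-x^{k-1}\|^2$ together with $\mathcal{Q}_{k-1}\succeq\underline{\gamma}\mathcal{I}$ to get $\tfrac12\|\overline{x}^k-x^{k-1}\|^2_{\mathcal{Q}_{k-1}}\ge(\underline{\gamma}/4-\mu/2)\|x^k-x^{k-1}\|^2$. Substituting yields the stated constant $\underline{\gamma}/4-\mu$.

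Parts (i) and (iii) are then immediate. Shifting the index in (ii) gives $\Xi(w^{k+1})\le\Phi(x^k)\le\Xi(w^k)-(\underline{\gamma}/4-\mu)\|x^k-x^{k-1}\|^2$, which is (i), and the coefficient is strictly positive because $\mu<\underline{\gamma}/4$ by the initialization. The chain $\Phi(x^k)\le\Xi(w^k)\le\Phi(x^{k-1})$ shows that both sequences are monotone decreasing, and both are bounded below by Assumption~\ref{ass0}(iii), so both converge (necessarily to the same limit).
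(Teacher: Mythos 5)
Your proposal is correct and follows essentially the same route as the paper: the same identity $\Xi(w^k)=q_{k-1,j_{k-1}}(\overline{x}^k)+\frac{1}{2}\|\overline{x}^k-x^{k-1}\|_{\mathcal{Q}_{k-1}}^2$, the same use of step 2(b) with the concavity inequality and the inexact criterion, and an equivalent Young-type bound to pass from $\|\overline{x}^k-x^{k-1}\|$ to $\|x^k-x^{k-1}\|$ (yielding the same constant $\underline{\gamma}/4-\mu$). The only difference is organizational — you prove (ii) first and obtain (i) by an index shift, whereas the paper proves (i) directly — which is a harmless and slightly cleaner arrangement of the identical ingredients.
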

\begin{proof}\!\!\! .
{\bf(i)} Fix any $k\in\mathbb{N}$. From the step 6 of Algorithm \ref{iLPA} and the inequality \eqref{pre-equa1}, we have
 \begin{align*}%\label{descent-eq1}
 \Theta(x^{k})
  &\le\vartheta_1(\ell_{F}(x^k,x^{k-1}))-\vartheta_2(\ell_{G}(x^k,x^{k-1}))+(1/2)\|x^k-x^{k-1}\|_{\mathcal{Q}_{k-1}}^2\\
  &\le\!\vartheta_1(\ell_{F}(x^k,x^{k-1}))\!-\!\Theta_2(x^{k-1})\!+\!
  \langle\nabla G(x^{k-1})\xi^{k-1},x^k-x^{k-1}\rangle+(1/2)\|x^k\!-\!x^{k-1}\|_{\mathcal{Q}_{k-1}}^2.
\end{align*}
Recall that $\Phi(x^{k})=\Theta(x^{k})+h(x^k)$ for each $k\in\mathbb{N}$. Along with the above inequality, it follows  
\[
 \Phi(x^k)\le\vartheta_1(\ell_{F}(x^k,x^{k-1}))-\Theta_2(x^{k-1})+
  \langle\nabla G(x^{k-1})\xi^{k-1},x^k-x^{k-1}\rangle+\frac{1}{2}\|x^k-x^{k-1}\|_{\mathcal{Q}_{k-1}}^2+h(x^k).
\] 
Combining with $\Xi(w^{k+1})\le\Phi(x^k)$ by \eqref{relation} and the expression of $q_{k-1,j_{k-1}}$, for each $k\in\mathbb{N}$, it holds 
\begin{align}
 \Xi(w^{k+1})&\!\le\!\vartheta_1\big(\ell_{F}(x^k\!,x^{k-1})\big)\!+\!\langle\nabla G(x^{k-1})\xi^{k-1},x^k\!-\!x^{k-1}\rangle+h(x^k)\!+\!\frac{1}{2}\|x^k\!-\!x^{k-1}\|_{\mathcal{Q}_{k-1}}^2\!\!\!-\!\Theta_2(x^{k\!-\!1})\nonumber\\
 \label{final-ineq40}
 &=q_{k-1,j_{k-1}}(x^{k})\stackrel{\eqref{inexact-cond}}{\le} q_{k-1,j_{k-1}}^{\rm LB}+\frac{\mu_{k-1}}{2}\|x^{k}-x^{k-1}\|^2\nonumber\\
 &\le q_{k-1,j_{k-1}}(\overline{x}^k)+\frac{\mu_{k-1}}{2}\|x^{k}-x^{k-1}\|^2\\
 \label{temp-ineq40}
 &\stackrel{\eqref{relation}}{=}\Xi(w^{k})-(1/2)\|\overline{x}^k-x^{k-1}\|_{\mathcal{Q}_{k-1}}^2+(\mu_{k-1}/2)\|x^{k}-x^{k-1}\|^2.
 \end{align}
 From Cauchy-Schwarz inequality, 
 $\frac{1}{4}\|{x}^k-x^{k-1}\|_{\mathcal{Q}_{k-1}}^2
 \le \frac{1}{2}\|{x}^k-\overline{x}^k\|_{\mathcal{Q}_{k-1}}^2+\frac{1}{2}\|\overline{x}^k-x^{k-1}\|_{\mathcal{Q}_{k-1}}^2$. While from the strong convexity of $q_{k-1,j_{k-1}}$ and the above \eqref{final-ineq40}, it follows that
 \begin{equation}\label{key-ineq40}
  \frac{1}{2}\|\overline{x}^k-x^{k}\|_{\mathcal{Q}_{k-1}}^2
 \le q_{k-1,j_{k-1}}(x^{k})-q_{k-1,j_{k-1}}(\overline{x}^{k})\le\frac{\mu_{k-1}}{2}\|x^{k}-x^{k-1}\|^2.
 \end{equation}
 Then, from the two sides, we immediately obtain that
 \begin{equation}\label{key-ineq41}
  \frac{1}{2}\|\overline{x}^k-x^{k-1}\|_{\mathcal{Q}_{k-1}}^2  \ge-\frac{\mu_{k-1}}{2}\|{x}^{k}-x^{k-1}\|^2+\frac{1}{4}\|{x}^k-x^{k-1}\|_{\mathcal{Q}_{k-1}}^2.
 \end{equation}
 Substituting this inequality into \eqref{temp-ineq40} and noting that $\mathcal{Q}_{k-1}\succeq\underline{\gamma}\mathcal{I}$ leads to the desired inequality.
	
 \noindent
 {\bf(ii)} Fix any $k\in\mathbb{N}$. From the inequality \eqref{final-ineq40} and the expression of $q_{k-1,j_{k-1}}$, it follows that
 \begin{align*}
  q_{k-1,j_{k-1}}(\overline{x}^{k})
  &\ge\vartheta_1(\ell_{F}({x}^{k},x^{k-1}))-\vartheta_2(G(x^{k-1}))+\langle\nabla G(x^{k-1})\xi^{k-1},x^{k}-x^{k-1}\rangle\\
  &\quad+h(x^k)+\!(1/2)\|x^k-x^{k-1}\|_{\mathcal{Q}_{k-1}}^2-(\mu_{k-1}/2)\|{x}^{k}-x^{k-1}\|^2\\
  &\stackrel{\eqref{pre-equa1}}{\ge}\vartheta_1(\ell_{F}({x}^{k},x^{k-1}))-\vartheta_2(\ell_G(x^k,x^{k-1}))+h(x^k)\\
  &\quad+\frac{1}{2}\|x^k-x^{k-1}\|_{\mathcal{Q}_{k-1}}^2-\frac{\mu_{k-1}}{2}\|{x}^{k}\!-\!x^{k-1}\|^2\\
  &\ \ge \Phi(x^k)-(\mu_{k-1}/2)\|{x}^{k}-x^{k-1}\|^2.
  \end{align*}
  Then, from \eqref{relation},
 \(
  \Phi(x^{k-1})\ge\Xi(w^k)=q_{k-1,j_{k-1}}(\overline{x}^{k})\ge\Phi(x^k)-\frac{\mu_{k-1}}{2}\|{x}^{k}-x^{k-1}\|^2+\frac{1}{2}\|\overline{x}^k-x^{k-1}\|_{\mathcal{Q}_{k-1}}^2,
 \)
 which together with \eqref{key-ineq41} and $\mathcal{Q}_{k-1}\succeq\underline{\gamma}\mathcal{I}$ implies the desired inequality.
	
 \noindent
 {\bf(iii)} By Remark \ref{remark-alg} (a), $\mu_{k-1}\le\underline{\gamma}/5$ for all $k\ge\overline{k}$, which by part (ii) implies the nonincreasing of $\{\Xi(w^k)\}_{k\ge\overline{k}}$ and $\{\Phi(x^k)\}_{k\ge\overline{k}}$. Recall that $\Phi$ is bounded from below by Assumption \ref{ass0} (iii), so is $\{\Xi(w^{k})\}_{k\in\mathbb{N}}$ is bounded from below. Then, $\{\Xi(w^k)\}_{k\in\mathbb{N}}$ and $\{\Phi(x^k)\}_{k\in\mathbb{N}}$ are convergent.
\end{proof}

 To achieve the subsequential convergence of $\{w^k\}_{k\in\mathbb{N}}$, we need the following assumption, which is rather weak since it requires $\Phi$ to have a bounded level set on $\Phi(x^0)$ instead of all real numbers.
%-----------------------------------------------------------------------------------------
\begin{assumption}\label{ass1}
 The level set $\mathcal{L}_{\Phi}(x^0):=\{x\in\mathbb{X}\ |\ \Phi(x)\le\Phi(x^0)\}$ is bounded. 
\end{assumption}

%------------------------------------------------------------------------------------------
\begin{proposition}\label{prop2-xk}
 Under Assumptions \ref{ass0}-\ref{ass1}, the following assertions hold.
 \begin{enumerate}
 \item[(i)] The sequence $\{\gamma_k\}_{k\in\mathbb{N}}$ with $\gamma_k:=\gamma_{k,j_k}$ is bounded, so is the sequence $\{\|\mathcal{Q}_k\|\}_{k\in\mathbb{N}}$.
				
 \item[(ii)] The sequence $\{w^{k}\}_{k\in\mathbb{N}}$ is bounded and the set of its accumulation points, denoted by $\Gamma^*$, is a nonempty and compact.
		
 \item[(iii)] For each $\overline{w}=(\overline{x},\overline{s},\overline{\xi},\overline{\mathcal{Q}})\in\Gamma^*$, it holds that $\overline{x}=\overline{s}\in\mathcal{S}^*$ and $\Xi(\overline{w})=\overline{\Xi}:=\lim_{k\to\infty}\Xi(w^k)$. 
\end{enumerate}
\end{proposition}
\begin{proof}\!\!\! .
 We first argue that the sequence $\{(\overline{x}^k,x^{k},\xi^k)\}_{k\in\mathbb{N}}$ is bounded. Indeed, by Proposition \ref{prop1-xk} (ii), $\{x^k\}_{k\in\mathbb{N}}\subset\mathcal{L}_{\Phi}(x^0)$, so the boundedness of $\{x^k\}_{k\in\mathbb{N}}$ follows  Assumption \ref{ass1} (i). Recall that $-\vartheta_2$ is strictly continuous and $\xi^k\in\partial(-\vartheta_2)(G(x^k))$ for each $k$. Combining the boundedness of $\{x^k\}_{k\in\mathbb{N}}$ and \cite[Theorem 9.13 \& Proposition 5.15]{RW98} (d), we achieve the boundedness of $\{\xi^k\}_{k\in\mathbb{N}}$. By combining \eqref{key-ineq40} with $\mathcal{Q}_{k-1}\succeq\underline{\gamma}\mathcal{I}$ and the boundedness of $\{x^k\}_{k\in\mathbb{N}}$ and $\{\mu_k\}_{k\in\mathbb{N}}$, we infer that $\{\overline{x}^k\}_{k\in\mathbb{N}}$ is bounded.    

 \noindent
 {\bf(i)} Suppose on the contrary that the sequence $\{\gamma_k\}_{k\in\mathbb{N}}$ is unbounded. Then there exists an index set $\mathcal{K}\subset\mathbb{N}$ such that $\lim_{\mathcal{K}\ni k\to\infty}\gamma_k=\infty$. Without loss of generality, for each $k\in\mathcal{K}$, we assume $j_k\ge 1$ and write $\widetilde{\gamma}_k:=\overline{\varrho}^{-1}\gamma_k=\gamma_{k,j_k-1}$. From the inner loop of Algorithm \ref{iLPA} and $\mathcal{Q}_{k,j_k-1}\succeq \widetilde{\gamma}_{k}\mathcal{I}$, for each $k\in\mathcal{K}$, 
 \begin{equation}\label{aim-ineq40}
  \Theta(x^{k,j_k-1})
  >\vartheta_1\big(\ell_{F}(x^{k,j_k-1},x^k)\big)-\vartheta_2\big(\ell_{G}(x^{k,j_k-1},x^k)\big)+({\widetilde{\gamma}_k}/{2})\|x^{k,j_k-1}-x^k\|^2.
 \end{equation}
 Note that $x^{k,j_k-1}\ne x^{k}$ for each $k\in\mathcal{K}$ (if, by the step \ref{step5} of Algorithm \ref{iLPA}, $x^k=x^{k,j_k-1}=\overline{x}^{k,j_k-1}$, so $0\in\partial q_{k,j_k-1}(x^k)$ and $x^k$ is a stationary point). Now we prove that $\lim_{\mathcal{K}\ni k\rightarrow\infty}\|x^{k,j_k-1}-x^{k}\|=0$. For each $k\in\mathcal{K}$, from the optimality of $\overline{x}^{k,j_k-1}$ and the feasibility of $x^k$ to the subproblem \eqref{subprobkj} with $j=j_{k}-1$, 
 \begin{align*}
  \Phi(x^k)=q_{k,j_k-1}({x}^{k})&\ge q_{k,j_k-1}(\overline{x}^{k,j_k-1})\ge q_{k,j_k-1}^{\rm LB}\stackrel{\eqref{inexact-cond}}{\ge} q_{k,j_k-1}(x^{k,j_k-1})-(\mu_k/2)\|x^{k,j_k-1}-x^{k}\|^2\\
 &=\vartheta_1(\ell_{F}(x^{k,j_k-1},x^k))+\langle\nabla G(x^k)\xi^k,x^{k,j_k-1}-x^k\rangle+h(x^{k,j_k-1})-\Theta_2(x^k)\nonumber\\
  &\quad +(1/2)\|x^{k,j_k-1}-x^k\|_{\mathcal{Q}_{k,j_k-1}}^2-(\mu_k/2)\|x^{k,j_k-1}-x^{k}\|^2.
 \end{align*}
 Combining this inequality with \eqref{vtheta1-ineq} for $y=\ell_{F}(x^{k,j_k-1},x^k)$ and \eqref{hineq} for $x=x^{k,j_k-1}$ yields that
 \begin{align*}
  \Phi(x^k)&\ge \langle\nabla F(x^k)\widehat{\zeta}+\nabla G(x^k)\xi^k,x^{k,j_k-1}-x^k\rangle+\langle\widehat{v},x^{k,j_k-1}-\widehat{x}\rangle+\Theta_1(\widehat{x})+h(\widehat{x})-\Theta_2(x^k)\\
  &\quad+\langle\widehat{\zeta},F(x^k)\!-\!F(\widehat{x})\rangle+(\widetilde{\gamma}_{k}/2)\|x^{k,j_k-1}-x^k\|^2-(\mu_k/2)\|x^{k,j_k-1}-x^{k}\|^2
 \end{align*}
 where the inequality is also using $\mathcal{Q}_{k,j_k-1}\succeq\widetilde{\gamma}_{k}\mathcal{I}$ for each $k\in\mathcal{K}$. After a suitable rearrangement,
 \begin{align*}
  (1/2)(\widetilde{\gamma}_k-\mu_k)\|x^{k,j_k-1}-x^{k}\|^2
  &\le \Phi(x^k)+\Theta_2(x^k)-\Theta_1(\widehat{x})-\langle\widehat{\zeta},F(x^k)-F(\widehat{x})\rangle-h(\widehat{x})\\
  &\quad-\langle\nabla F(x^k)\widehat{\zeta}+\nabla G(x^k)\xi^k,x^{k,j_k-1}-x^k\rangle-\langle\widehat{v},x^{k,j_k-1}-\widehat{x}\rangle\\
  &\le\Phi(x^0)\!+\!\Theta_2(x^k)\!-\!\Theta_1(\widehat{x})-\!\langle\widehat{\zeta},F(x^k)\!-\!F(\widehat{x})\rangle-h(\widehat{x})\!-\!\langle\widehat{v},x^k\rangle\\
  &\quad+\!\langle\widehat{v},\widehat{x}\rangle\!+\big[\|\nabla F(x^{k})\widehat{\zeta}\|
   \! +\!\|\nabla G(x^{k})\xi^{k}\|\!+\!\|\widehat{v}\|\big]\|{x}^{k,j_k-1}-x^{k}\|,
 \end{align*}
 where the second inequality is due to $\Phi(x^k)\le\Phi(x^0)$ by Proposition \ref{prop1-xk} (ii). Note that $\lim_{\mathcal{K}\ni k\to\infty}\widetilde{\gamma}_k=\infty$. Passing the limit $\mathcal{K}\ni k\to\infty$, recalling that $x^{k,j_k-1}\ne x^{k}$ for each $k\in K$ and using the boundedness of $\{(x^k,\xi^k)\}_{k\in\mathbb{N}}$, we obtain the desired limit $\lim_{\mathcal{K}\ni k\to\infty}\|x^{k,j_k-1}\!\!-\!x^{k}\|=0$. Then, invoking the previous \eqref{Theta-ineq30} with $x=x^{k,j_k-1}$ for sufficiently large $k\in\mathcal{K}$, it follows that for large enough $k\in\mathcal{K}$,
 \begin{equation*}%\label{temp-ineq400}
  \Theta(x^{k,j_k-1})\le\vartheta_1\big(\ell_{F}(x^{k,j_k-1},x^k)\big)-\vartheta_2\big(\ell_{G}(x^{k,j_k-1},x^k)\big)+(L_k/2)\|x^{k,j_k-1}-x^k\|^2,
 \end{equation*} 
 where $L_k$ is a constant close enough to ${\rm lip}\,\Theta(x^k)$ from above. Note that the sequence $\{{\rm lip}\,\Theta(x^k)\}_{k\in\mathbb{N}}$ is bounded due to the boundedness of $\{x^k\}_{k\in\mathbb{N}}$ and \cite[Theorem 9.2]{RW98}, so is the sequence $\{L_k\}_{k\in\mathbb{N}}$. Then, for  sufficiently large $k\in\mathcal{K}$, the above inequality is a contradiction to \eqref{aim-ineq40}.
 
 \noindent
 {\bf(ii)} The boundedness of $\{w^k\}_{k\in\mathbb{N}}$ follows that of $\{(\overline{x}^k,x^{k},\xi^k)\}_{k\in\mathbb{N}}$ and part (i), which implies the nonemptyness and compactness of the set $\Gamma^*$.
	
 \noindent
 {\bf(iii)} Pick any $\overline{w}=(\overline{x},\overline{s},\overline{\xi},\overline{\mathcal{Q}})\in\Gamma^*$. There exists an index set $\mathcal{K}\subset\mathbb{N}$ such that $\lim_{\mathcal{K}\ni k\to\infty}w^k=\overline{w}$. From Proposition \ref{prop1-xk}  (ii)-(iii), $\lim_{\mathcal{K}\ni k\to\infty}\|x^k-x^{k-1}\|=0$. Along with \eqref{key-ineq40} and $\mathcal{Q}_k\ge\underline{\gamma}\mathcal{I}$, we have $\lim_{\mathcal{K}\ni k\to\infty}\|\overline{x}^k-x^{k}\|=0$, so $\overline{x}=\overline{s}$. From the definition of $\overline{x}^k$ and \cite[Theorem 10.6]{RW98}, for each $k\in\mathbb{N}$, 
 \begin{equation*}
  0\in\nabla F(x^{k-1})\partial\vartheta_1(\ell_{F}(\overline{x}^k,x^{k-1}))+\nabla G(x^{k-1})
  \partial(-\vartheta_2)(G(x^{k-1}))+\mathcal{Q}_{k-1}(\overline{x}^k-x^{k-1})+\partial h(\overline{x}^k).
 \end{equation*}
 Passing the limit $\mathcal{K}\ni k\to\infty$ to the last inclusion and using the outer semicontinuity of $\partial\vartheta_1,\partial(-\vartheta_2)$ and $\partial h$ yields that $0\in\nabla F(\overline{x})\partial\vartheta_1(F(\overline{x}))+\nabla G(\overline{x})\partial(-\vartheta_2)(G(\overline{x}))+\partial h(\overline{x})$, so $\overline{x}$ is a stationary point of problem \eqref{prob}. We next argue that $\Xi(\overline{w})=\lim_{k\to\infty}\Xi(w^k)$. Note that $-\overline{\xi}\in\partial\vartheta_2(G(\overline{x}))$. Hence, $\langle G(\overline{x}),\overline{\xi}\rangle+\vartheta_2^*(-\overline{\xi})=-\vartheta_2(G(\overline{x}))$. By the expression of $\Xi$ and $\overline{x}=\overline{s}$, we have $\Xi(\overline{w})=\Theta_1(\overline{x})-\Theta_2(\overline{x})+h(\overline{x})$. From the feasibility of $\overline{x}$ and the optimality of $\overline{x}^k$ to the $(k-1)$-th subproblem, it holds that
 \begin{align*}
 &\vartheta_1(\ell_{F}(\overline{x}^k,x^{k-1}))+\langle\nabla G(x^{k-1})\xi^{k-1},\overline{x}^{k}-x^{k-1}\rangle+h(\overline{x}^{k})+\frac{1}{2}\|\overline{x}^{k}-x^{k-1}\|_{\mathcal{Q}_{k-1}}^2\\
 &\le\vartheta_1(\ell_{F}(\overline{x},x^{k-1}))+\langle\nabla G(x^{k-1})\xi^{k-1},\overline{x}-x^{k-1}\rangle+h(\overline{x})+\frac{1}{2}\|\overline{x}-x^{k-1}\|_{\mathcal{Q}_{k-1}}^2,
 \end{align*}
 which by the continuity of $\vartheta_1,\,F'$ and $G'$ implies that $\limsup_{\mathcal{K}\ni k\to\infty}h(\overline{x}^{k})\le h(\overline{x})$. Along with the lower semicontinuity of $h$, $\lim_{\mathcal{K}\ni k\to\infty}h(\overline{x}^{k})\!=\!h(\overline{x})$. By \eqref{pre-equa0}, $\lim_{\mathcal{K}\ni k\to\infty}\vartheta_2^*(-\xi^k)\!=-\vartheta_2(G(\overline{x}))-\langle G(\overline{x}),\overline{\xi}\rangle$. Thus, by the expression of $\Xi$,  $\lim_{\mathcal{K}\ni k\to\infty}\Xi(w^k)=\vartheta_1(F(\overline{x}))-\vartheta_2(G(\overline{x}))+h(\overline{x})=\Xi(\overline{w})$. 
\end{proof}
%----------------------------------------------------------------------------------------

\subsection{Full convergence of Algorithm \ref{iLPA}}\label{sec4.2}

To achieve the full convergence of $\{x^k\}_{k\in\mathbb{N}}$, we need  the twice continuous differentiability of $F$ and $G$ on an open set $\mathcal{V}\supset\Gamma^*$ as in Assumption \ref{ass2}, a little stronger than the $\mathcal{C}^{1,1}$ assumption on $F$ in \cite{LeThi23}.  
%--------------------------------------------------------------------
\begin{assumption}\label{ass2}
 There exists an open set $\mathcal{V}\supset\Gamma^*$ such that $F'$ and $G'$ are continuously differentiable on $\mathcal{V}_{x}$ where, for a given set $\Gamma\subset\mathbb{W}$, the sets $\Gamma_{\!x}$ and $\Gamma_{\!z}$ are the projection of $\Gamma$ onto $\mathbb{X}$ and $\mathbb{Z}$, respectively.
\end{assumption}

The rest of this section focuses on the following full convergence result under Assumptions \ref{ass0}-\ref{ass2}.
%-------------------------------------------------------------------------
\begin{theorem}\label{global-converge}
 Suppose that Assumptions \ref{ass0}-\ref{ass2} hold, and that $\Xi$ is a KL function. Then, the sequence $\{x^k\}_{k\in\mathbb{N}}$ is convergent and its limit  $x^*$ is a stationary point of the problem \eqref{prob}.
\end{theorem}

To prove the result of Theorem \ref{global-converge}, we first characterize the subdifferential of $\Xi$ at any $w\in{\rm dom}\,\Xi$.
%------------------------------------------------------------------------------------
\begin{lemma}\label{subdiff-LemXi}
 Fix any $w=(x,s,z,\mathcal{Q})\in{\rm dom}\,h\times{\rm dom}\,h\times(-{\rm dom}\,\vartheta_2^*)\times\mathbb{S}_{+}$. Suppose that $F'$ and $G'$ are strictly differentiable at $s$. Then $\Xi$ is regular at $w$ with $\partial\Xi(w)=T_1(w)\times T_2(w)\times T_3(w)$, where
 \begin{align*}
 T_1(w)&=\left(\begin{matrix}
          \nabla\!F(s)\\
 	[D^2F(s)(x-s,\cdot)]^*\\	
      \end{matrix}\right)\partial\vartheta_1(\ell_{F}(x,s))
      +\left(\begin{matrix}
        \nabla G(s)z+\partial h(x)+2\mathcal{Q}(x-s)\\
	[D^2G(s)(x-s,\cdot)]^*z+2\mathcal{Q}(s-x)\\
      \end{matrix}\right),\\
 T_2(w)&=\ell_{G}(x,s)-\partial\vartheta_2^*(-z)\ \ {\rm and}\ \ T_3(w)=(s-x)(s-x)^{\top}+\mathcal{N}_{\mathbb{S}_{+}}(\mathcal{Q}).	 	
 \end{align*}
\end{lemma}
\begin{proof}\!\!\! .
 Let $\psi(x',s'):=\vartheta_1(\ell_{F}(x',s'))$ for $(x',s')\in\mathbb{X}\times\mathbb{X}$. For any $w'=(x',s',z',\mathcal{Q}')\in\mathbb{W}$, define 
 \[
  f(w'):=\psi(x',s')+h(x')+\vartheta_2^*(-z')+ \chi_{\mathbb{S}_{+}}(\mathcal{Q}')\ \ {\rm and}\ \ g(w'):=\langle\ell_{G}(x',s'),z'\rangle+\|x'-s'\|_{\mathcal{Q}'}^2.
 \]
 Then $\Xi=f+g$. The strict differentiability of $G'$ at $s$ implies that of $g$ at $w$, so $\partial\Xi(w)=\partial\!f(w)+\nabla g(w)$ and $\widehat{\partial}\Xi(w)=\widehat{\partial}\!f(w)+\nabla g(w)$. Recall that $\vartheta_1$ is strictly continuous at $\ell_{F}(x,s)$ and ${\rm dom}\,\vartheta_1=\mathbb{Y}$. From the strict differentiability of $F'$ at $s$ and \cite[Theorem 10.6]{RW98},  $\widehat{\partial}\psi(x,s)=\partial\psi(x,s)=\nabla\ell_{F}(x,s)\partial\vartheta_1(\ell_{F}(x,s))$.
 Since $\psi$ is strictly continuous at $w$, by invoking \cite[Exercise 10.10 \& Proposition  10.5]{RW98}, we have
 \begin{equation}\label{temp-equa40}
   \partial\!f(w)=\widehat{\partial}\!f(w) =\partial\psi(x,s)\times\{0\}\times\{0\}+\partial h(x)\times\{0\}\times[-\partial\vartheta_2^*(-z)]\times\mathcal{N}_{\mathbb{S}_{+}}(\mathcal{Q}).
 \end{equation}  
 This shows that the function $\Xi$ is regular at $w$, 
 and the second part also holds. 
\end{proof}

By using Lemma \ref{subdiff-LemXi}, we can provide the relative error condition for $w^k$ to be a stationary point of $\Xi$. 
%---------------------------------------------------------------------------------------------
\begin{proposition}\label{prop3-xk}
 Under Assumptions \ref{ass0}-\ref{ass2}, it holds $\Gamma^*\subset{\rm crit}\,\Xi$, the set of critical points of $\Xi$, and there exist $\widehat{k}\in\mathbb{N}$ and a constant $b>0$ such that for all $k\ge\widehat{k}$, ${\rm dist}(0,\partial\Xi(w^{k}))\le b\|x^k-x^{k-1}\|$.
\end{proposition}
\begin{proof}\!.
 Pick any $\overline{w}=(\overline{x},\overline{s},\overline{\xi},\overline{\mathcal{Q}})\in\Gamma^*$. From Proposition \ref{prop2-xk} (iii) and its proof, we have $\overline{x}=\overline{s}$, $	 0\in\nabla\!F(\overline{x})\partial\!\vartheta_1\!(F(\overline{x}))\!+\nabla G(\overline{x})\partial(-\vartheta_2)(G(\overline{x}))\!+\!\partial h(\overline{x})$ and $-\overline{\xi}\in\partial\vartheta_2(G(\overline{x}))$. Together with Lemma \ref{subdiff-LemXi}, we have $0\in\partial\Xi(\overline{w})$, i.e., $\overline{w}\in{\rm crit}\,\Xi$. The inclusion $\Gamma^*\subset{\rm crit}\,\Xi$ follows  the arbitrariness of $\overline{w}\in\Gamma^*$.

 For the second part, recall that $\Gamma^*$ is the set of cluster points, so $\lim_{k\to\infty}{\rm dist}(w^k,\Gamma^*)=0$. Then, there exists $\widehat{k}>\overline{k}$ such that $w^k\in\mathcal{V}$ for all $k\ge\widehat{k}$, where $\overline{k}$ is the same as in Remark \ref{remark-alg} (a). For each $k\ge\widehat{k}$, by the optimality of $\overline{x}^{k}$ to the $(k\!-\!1)$th subproblem, there is $v^{k-1}\in\partial\vartheta_1(\ell_{F}(\overline{x}^{k},x^{k-1}))$ such that
 \begin{equation}\label{temp-inclusion1}
  0\in\nabla F(x^{k-1})v^{k-1}
    +\nabla G(x^{k-1})\xi^{k-1}+\partial h(\overline{x}^{k})+\mathcal{Q}_{k-1}(\overline{x}^{k}-x^{k-1}).
 \end{equation}
 Note that $\mathcal{N}_{\mathbb{S}_{+}}(\mathcal{Q}_{k-1})=\{0\}$ since $\mathcal{Q}_{k-1}\succeq\underline{\gamma}\mathcal{I}$, and $G(x^{k-1})\in\partial\vartheta_2^*(-\xi^{k-1})$. For each $k\ge\widehat{k}$, define
 \[
   \zeta^k\!:=\!\!\left(\begin{matrix}
   \mathcal{Q}_{k-1}(\overline{x}^k-x^{k-1})\\
   \![D^2F(x^{k-1})(\overline{x}^{k}\!-\!x^{k-1},\cdot)]^*v^{k-1}\!+\![D^2G(x^{k-1})(\overline{x}^{k}\!-\!x^{k-1},\cdot)]^*\xi^{k-1}\!+\!2\mathcal{Q}_{k-1}(x^{k-1}\!-\!\overline{x}^{k})\!\\
    \ell_{G}(\overline{x}^{k},x^{k-1})-G(x^{k-1})\\
   (x^{k-1}-\overline{x}^{k})(x^{k-1}-\overline{x}^{k})^{\top}
  \end{matrix}\right).
 \]
 Using \eqref{temp-inclusion1} and comparing with the expression of $\partial\Xi$ in Lemma \ref{subdiff-LemXi}, we obtain $\zeta^k\in\partial\Xi(w^k)$. Since $\vartheta_1$ and $\vartheta_2$ are strictly continuous, from \cite[Theorem 9.13]{RW98}, $\|v^{k-1}\|\le{\rm lip}\,\vartheta_1(\ell_{F}(\overline{x}^k,x^{k-1}))$  and $\|\xi^{k-1}\|\le{\rm lip}\,\vartheta_2(G(x^{k-1}))$. Recall that $\{(\overline{x}^k,x^k)\}_{k\in\mathbb{N}}$ is bounded, so are the sequences $\{\ell_{F}(\overline{x}^k,x^{k-1})\}_{k\in\mathbb{N}}$ and $\{G(x^{k-1})\}_{k\in\mathbb{N}}$. Together with \cite[Theorem 9.2]{RW98}, there necessarily exists a constant $b_1>0$ such that $\max(\|v^{k-1}\|,\|\xi^{k-1}\|)\le b_1$ for all $k\ge\widehat{k}$. Along with the boundedness of $\{\|\mathcal{Q}_{k-1}\|\}_{k\in\mathbb{N}}$ and the expression of $\zeta^k$, there exists $b_2>0$ such that
 $\|\zeta^k\|\le b_2\|x^{k-1}-\overline{x}^{k}\|\le b_2[\|x^{k-1}-x^k\|+\|\overline{x}^{k}-x^k\|]$. Furthermore, combining  $\mathcal{Q}_{k-1}\succeq\underline{\gamma}\mathcal{I}$ with \eqref{key-ineq40} and Remark \ref{remark-alg} (a) leads to $\|\overline{x}^k-x^k\|\le\!\sqrt{\mu_{k-1}\underline{\gamma}^{-1}}\|x^k-x^{k-1}\|\le\frac{1}{\sqrt{5}}\|x^k-x^{k-1}\|$ for all $k\ge\widehat{k}$. The result then holds with $b=\frac{b_2(\sqrt{5}+1)}{\sqrt{5}}$. 
\end{proof}

 Now we are ready to prove the convergence of the sequence $\{x^k\}_{k\in\mathbb{N}}$. Its proof is similar to that of \cite[Theorem 1]{Bolte14}, and we here include it to show that its proof depends only on Propositions \ref{prop1-xk}-\ref{prop3-xk}. 

 \medskip
 \noindent
 {\it The proof of Theorem \ref{global-converge}.}
 If there exists some $k_0\in\mathbb{N}$ such that $\Xi(w^{k_0})=\Xi(w^{k_0+1})$, then $x^{k_0}=x^{k_0-1}$ follows Proposition \ref{prop1-xk} (i). By Remark \ref{remark-alg} (e), Algorithm \ref{iLPA} finds a stationary point within a finite number of steps. Therefore, in view of Proposition \ref{prop1-xk} (i), it suffices to consider that $\Xi(w^k)>\Xi(w^{k+1})>\overline{\Xi}$ for all $k\in\mathbb{N}$. By Proposition \ref{prop2-xk} (ii)-(iii), the set $\Gamma^*$ is nonempty and compact, and $\Xi(w)=\overline{\Xi}$ for all $w\in\Gamma^*$. From the KL property of $\Xi$ on $\Gamma^*$ and \cite[Lemma 6]{Bolte14}, there exist $\varepsilon>0,\eta\in(0,\infty]$ and $\varphi\in\Upsilon_{\!\eta}$ such that 
 \[
  \varphi'(\Xi(w)-\overline{\Xi}){\rm dist}(0,\partial\Xi(w))\ge 1
 \] 
 for all $w\in[\overline{\Xi}<\Xi<\overline{\Xi}+\eta]\cap\mathfrak{B}(\Gamma^*,\varepsilon)$ with $\mathfrak{B}(\Gamma^*,\varepsilon)\!:=\big\{w\in\mathbb{W}\,|\,{\rm dist}(w,\Gamma^*)\le\varepsilon\big\}$. Recall that $\lim_{k\to\infty}\Xi(w^k)=\overline{\Xi}$ by Proposition \ref{prop1-xk} (iii) and $\lim_{k\to\infty}{\rm dist}(w^k,\Gamma^*)=0$. There exists $\mathbb{N}\ni\widetilde{k}\ge\widehat{k}+1$ such that for all $k\ge \widetilde{k}$, $w^k\in[\overline{\Xi}<\Xi<\overline{\Xi}\!+\!\eta]\cap\mathfrak{B}(\Gamma^*,\varepsilon)$. From the above inequality, it then follows 
 \[
  \varphi'\big(\Xi(w^k)\!-\!\overline{\Xi}\big){\rm dist}(0,\partial\Xi(w^k))\geq 1\quad\forall k\ge\widetilde{k}. 
 \]
 Along with Proposition \ref{prop3-xk}, for all $k\ge\widetilde{k}$, 
 $b\varphi'\big(\Xi(w^{k})-\overline{\Xi}\big)\|x^k-x^{k-1}\|\ge 1$. From the concavity of $\varphi$,
 \begin{align*}
 \Delta_{k,k+1}&:=\varphi(\Xi(w^k)-\overline{\Xi})-\varphi(\Xi(w^{k+1})-\overline{\Xi})
  \ge\varphi'(\Xi(w^k)-\overline{\Xi})(\Xi(w^{k})\!-\!\Xi(w^{k+1}))\\
  &\ge\frac{\Xi(w^{k})\!-\!\Xi(w^{k+1})}{b\|x^k-x^{k-1}\|}
  \ge (\underline{\gamma}/20)b^{-1}\|x^{k}-x^{k-1}\|\quad\forall k\ge\widetilde{k},
 \end{align*}
 where the third inequality is due to Proposition \ref{prop1-xk}. 
 Then, $\|x^{k}\!-\!x^{k-1}\|\le 20 b\underline{\gamma}^{-1}\Delta_{k,k+1}$ for all $k\ge\widetilde{k}$. Summing the above inequality from any $k\ge\widetilde{k}$ to any $l>k$ leads to 
 \begin{align}\label{ineq-rate}
 {\textstyle\sum_{i=k}^l}\|x^{i+1}-x^i\|
 \le 20b\underline{\gamma}^{-1}{\textstyle\sum_{i=k}^l}\Delta_{i,i+1}&\le 20b\underline{\gamma}^{-1}\big[\varphi\big(\Xi(w^{k})-\overline{\Xi}\big)-\varphi\big(\Xi(w^{l+1})-\overline{\Xi}\big)\big],\\
 &\le 20b\underline{\gamma}^{-1}\varphi\big(\Xi(w^{k})-\overline{\Xi}\big),\nonumber
 \end{align}
 where the second inequality is due to the nonnegativity of $\varphi$. Passing the limit $l\to\infty$ to the above inequality results in $\sum_{k=0}^{\infty}\|x^{k+1}\!-x^k\|<\infty$. Thus, we complete the proof. \qed
 
%----------------------------------------------------------------------------------------------
\begin{remark}
 From \cite[Theorem 4.1]{Attouch10}, if $\Xi$ is definable in an o-minimal structure over the real field $(\mathbb{R},+,\cdot)$, then it has the KL property at each point of ${\rm dom}\,\partial\Xi$. Note that $\chi_{\mathbb{S}_{+}}(\cdot)$ is semialgebraic according to Appendix A. Together with the expression of $\Xi$ and \cite[Sections 2 $\&$\,3]{Ioffe09}, when $F,G$ and $\vartheta_1,\vartheta_2,h$ are definable in the same o-minimal structure over $(\mathbb{R},+,\cdot)$, $\Xi$ is definable in this o-minimal structure. As discussed in \cite[Section 4]{Attouch10}, definable functions in an o-minimal structure are very rich, which cover semialgebraic functions and globally subanalytic functions. 
\end{remark}
%----------------------------------------------------------------------------------------
\subsection{Local convergence rate of Algorithm \ref{iLPA}}\label{sec4.3}

In view of Theorem \ref{global-converge} and Propositions \ref{prop1-xk}-\ref{prop3-xk}, using the above \eqref{ineq-rate} and following the same arguments as those for \cite[Theorem 2]{Attouch09} yields the following local convergence rate result.  
%----------------------------------------------------------------------------------------
\begin{theorem}\label{convergece-rate}
 Suppose that Assumptions \ref{ass0}-\ref{ass2} hold, and that $\Xi$ has the KL property of exponent $p\in[1/2,1)$ on the set $\Gamma^*$. Then, $\{x^k\}_{k\in\mathbb{N}}$ is convergent with limit $x^*$, and the following assertions hold:
 \begin{enumerate}
 \item[(i)]  when $p=1/2$, there exist $c_1>0$ and $\tau\in(0,1)$ such that $\|x^k-x^*\|\leq c_1\tau^k$;
		
 \item[(ii)] when $p\in({1}/{2},1)$, there exists $c_1>0$ such that $\|x^k-x^*\|\le c_1k^{-\frac{1-p}{2p-1}}$.
 \end{enumerate}
 \end{theorem}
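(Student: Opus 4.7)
The plan is to follow the standard three-ingredient recipe from \cite{Attouch09,Bolte14}, combining the sufficient decrease of Proposition \ref{prop1-xk} (i), the relative error bound of Proposition \ref{prop3-xk}, and the KL inequality of exponent $p$ at each $w^*\in\Gamma^*$, to control the path length $\sum_{j\ge k}\|x^{j+1}-x^{j}\|$ by a power of $\Xi(w^{k+1})-\Xi^*$.

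First, I would invoke Theorem \ref{global-converge} to secure that $\{x^k\}$ and $\{\overline{x}^k\}$ converge to a common limit $x^*$, and Proposition \ref{prop2-xk} (iii) to ensure that $\Xi$ is constant on $\Gamma^*$ with value $\Xi^*:=\lim_{k\to\infty}\Xi(w^k)$; consequently $\{w^k\}$ converges to some $w^*=(x^*,x^*,\widehat{\xi},\widehat{\mathcal{Q}})\in\Gamma^*\cap{\rm crit}\,\Xi$. Since $\Xi$ is nonincreasing along $\{w^k\}$ with limit $\Xi^*$, either $\Xi(w^K)=\Xi^*$ for some finite $K$ (in which case Proposition \ref{prop1-xk} (i) forces $x^{k}=x^{k-1}$ for all $k>K$ and the conclusion is trivial), or $\Xi(w^k)>\Xi^*$ for all $k$, which I now treat.

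Next, applying the KL inequality of exponent $p$ at $w^*$ with the desingularizing function $\varphi(t)=ct^{1-p}$, together with Proposition \ref{prop3-xk}, I would get for all $k$ sufficiently large
\[
\bigl(\Xi(w^{k+1})-\Xi^*\bigr)^{p}\le c(1-p)\alpha\,\|x^{k+1}-x^k\|.
\]
Pairing this with the decrease estimate $\Xi(w^{k+1})-\Xi(w^{k+2})\ge(\underline{\gamma}/4-\mu)\|x^{k+1}-x^k\|^2$ from Proposition \ref{prop1-xk} (i), and using concavity of $t\mapsto t^{1-p}$ on $(\Xi(w^{k+2})-\Xi^*,\Xi(w^{k+1})-\Xi^*]$, I obtain a telescoping recursion of the form
\[
\bigl(\Xi(w^{k+1})-\Xi^*\bigr)^{1-p}-\bigl(\Xi(w^{k+2})-\Xi^*\bigr)^{1-p}\ge C\,\|x^{k+1}-x^k\|
\]
for some $C>0$. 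Summing from an index $k_0$ to infinity yields $\sum_{j\ge k}\|x^{j+1}-x^j\|\le C^{-1}(\Xi(w^{k+1})-\Xi^*)^{1-p}$, so writing $S_k:=\sum_{j\ge k}\|x^{j+1}-x^j\|$ we have $\|x^k-x^*\|\le S_k$ and $S_k\le C^{-1}(\Xi(w^{k+1})-\Xi^*)^{1-p}$.

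Finally, the two rates follow from this bound. For $p=1/2$, the KL inequality gives $\Xi(w^{k+1})-\Xi^*\le (c\alpha/2)^2\|x^{k+1}-x^k\|^2\le\beta(\Xi(w^k)-\Xi(w^{k+1}))$ for some $\beta>0$, hence $\Xi(w^{k+1})-\Xi^*\le\frac{\beta}{1+\beta}(\Xi(w^k)-\Xi^*)$, which is geometric decay; combined with $\|x^k-x^*\|\le C^{-1}(\Xi(w^{k+1})-\Xi^*)^{1/2}$ it yields $\|x^k-x^*\|\le c_1\rho^k$. For $p\in(1/2,1)$, combining the KL bound at $w^{k+1}$ with $\|x^{k+1}-x^k\|=S_k-S_{k+1}$ and with the reverse inequality $\Xi(w^{k+1})-\Xi^*\ge(CS_k)^{1/(1-p)}$ produces the scalar recursion $S_k^{p/(1-p)}\le C'(S_k-S_{k+1})$; this is the classical recursion whose solution satisfies $S_k\le c_1 k^{-(1-p)/(2p-1)}$, and the rate on $\|x^k-x^*\|\le S_k$ follows. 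The main obstacle is the careful bookkeeping in deriving the telescoping recursion while accommodating the mismatch of indices between the KL inequality (applied at $w^{k+1}$, involving $\|x^{k+1}-x^k\|$) and the sufficient decrease (which pairs consecutive iterates); otherwise the argument is a direct transcription of \cite[Theorem 2]{Attouch09}.
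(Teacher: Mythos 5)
Your proposal is correct and follows exactly the route the paper takes: the paper's proof is a one-line deferral to the standard three-ingredient argument of \cite[Theorem 2]{Attouch09}, using Theorem \ref{global-converge} for convergence of the iterates, Proposition \ref{prop1-xk} (i) for sufficient decrease, Proposition \ref{prop3-xk} for the relative-error bound on $\partial\Xi(w^{k+1})$, and the KL inequality of exponent $p$ to telescope $\varphi(\Xi(w^{k})-\Xi^*)$ and bound the tail path length $S_k$, precisely as you do. Your index bookkeeping is in fact unproblematic here because both the decrease estimate and the subgradient bound are keyed to the same difference $\|x^{k+1}-x^{k}\|$, and your derivation of the two rates from the recursion on $S_k$ matches the classical conclusion.
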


 As is well known, to verify the KL property of exponent $1/2$ for a nonconvex and nonsmooth function is not an easy task because there are no convenient rules to identify it. Next we focus on the verifiable conditions for the KL property of $\Xi$ with exponent $p\in[1/2,1)$. To this end, we introduce
%---------------------------------------------------------------------------------------------
\begin{equation}\label{wXi-fun}
 \widetilde{\Xi}(x,s,z):=\vartheta_1(\ell_{F}(x,s))+\langle \ell_{G}(x,s),z\rangle+h(x)+\vartheta_2^*(-z)\quad\forall(x,s,z) \in\mathbb{X}\times\mathbb{X}\times\mathbb{Z}.
\end{equation}
Under the assumption of Lemma \ref{subdiff-LemXi}, for any $(x,s,z)\in{\rm dom}\,h\times{\rm dom}\,h\times(-{\rm dom}\,\vartheta_2^*)$,
%------------------------------------------------------------------------------------------------
\begin{equation}\label{subdiff-wXi}
 \partial\widetilde{\Xi}(x,s,z)=
  \left(\begin{matrix}
  \left(\begin{matrix}
   \nabla F(s)	\\ [D^2F(s)(x-s,\cdot)]^*\\ 
   \end{matrix}\right)\partial\vartheta_1(\ell_{F}(x,s))
    +\left(\begin{matrix}
   \nabla G(s)z+\partial h(x) \\ [D^2G(s)(x-s,\cdot)]^*z
     \end{matrix}\right)\\
   \ell_{G}(x,s)-\partial\vartheta_2^*(-z)
  \end{matrix}\right).
\end{equation}
By comparing \eqref{subdiff-wXi} with the expression of $\partial\Xi$ in Lemma \ref{subdiff-LemXi}, we see that $(\overline{x},\overline{x},\overline{z})$ is a critical point of $\widetilde{\Xi}$ if and only if $(\overline{x},\overline{x},\overline{z},\mathcal{Q})$ for a certain PD linear mapping $\mathcal{Q}\!:\mathbb{X}\to\mathbb{X}$ is a critical point of $\Xi$. By combining \eqref{subdiff-wXi} with Definition \ref{spoint-def}, if $\overline{x}$ is a stationary point of \eqref{prob}, there exists $\overline{z}\in\partial(-\vartheta_2)(G(\overline{x}))$ such that $(\overline{x},\overline{x},\overline{z})$ is a critical point of $\widetilde{\Xi}$; and if $(\overline{x},\overline{x},\overline{z})$ is a critical point of $\widetilde{\Xi}$ and $\overline{z}\in\partial(-\vartheta_2)(G(\overline{x}))$, then $\overline{x}$ is a stationary point of \eqref{prob}. The following proposition provides a condition for the KL property of $\widetilde{\Xi}$ and $\Xi$ with exponent $p\in[1/2,1)$ by using that of an almost separable nonsmooth function.  
%------------------------------------------------------------------------------------------------
%\setcounter{proposition}{2}
\begin{proposition}\label{prop-KL}
 Consider any $\overline{\omega}=(\overline{x},\overline{x},\overline{z})\in(\partial\widetilde{\Xi})^{-1}(0)$. Suppose that the mappings $F'$ and $G'$ are continuously differentiable on an neighborhood of $\overline{x}$, that the following function
 \begin{equation}\label{ffun}
  f(y,s,u,z):=\vartheta_1(y)+h(s)+\langle u,z\rangle+\vartheta_2^*(-z)\ \ {\rm for}\  (y,s,u,z)\in\mathbb{Y}\times\mathbb{X}\times\mathbb{Z}\times\mathbb{Z}
 \end{equation}
 satisfies the KL property of exponent $p\in[0,1)$ at $(\overline{y},\overline{x},\overline{u},\overline{z})$ with $\overline{y}=F(\overline{x})$ and $\overline{u}=G(\overline{x})$, and that 
 \begin{align}\label{key-cond}
 {\rm Ker}[\nabla F(\overline{x})\ \ \mathcal{I}\ \ \nabla G(\overline{x})]\cap\Big[\limsup_{y\to\overline{y}}{\rm pos}(\partial\vartheta_1(y))\times&\limsup_{x\xrightarrow[h]{}\overline{x}}{\rm pos}(\partial h(x))\times\limsup_{\zeta\to\overline{u}}(-\partial\vartheta_2(\zeta))\Big]\nonumber\\
 &=\!\big\{(0,0,0)\big\},
 \end{align}
 where, for a set $C$, ${\rm pos}(C):=\big\{\lambda x\,|\,x\in C,\lambda\ge 0\big\}$ is the positive hull of $C$. Then, the function $\widetilde{\Xi}$ satisfies the KL property of exponent $p$ at $\overline{\omega}$, so does the function $\Xi$ at  $(\overline{\omega},\overline{\mathcal{Q}})$ for any PD linear mapping $\overline{\mathcal{Q}}$ from $\mathbb{X}$ to $\mathbb{X}$ if $p\in[1/2,1)$.
 % \begin{equation}\label{key-cond}
 % {\rm Ker}\big([\nabla F(\overline{x})\ \ \mathcal{I}\ \ \nabla G(\overline{x})]\big)\cap{\rm cl}\,{\rm cone}\big[\partial\vartheta_1(\overline{y})\times\partial h(\overline{x})\times(-\partial\vartheta_2(\overline{u}))\big]=\big\{0\big\}.
 % \end{equation}
\end{proposition}
\begin{proof}\!\!.
 Suppose on the contrary that $\widetilde{\Xi}$ does not have the KL property of exponent $p$ at $\overline{\omega}$. By Definition \ref{KL-def}, there exists a sequence $\{\omega^k\}_{k\in\mathbb{N}}$ with $\omega^k=(x^k,s^k,z^k)\to\overline{\omega}$ as $k\to\infty$ and $\widetilde{\Xi}(\overline{\omega})<\widetilde{\Xi}(\omega^k)<\widetilde{\Xi}(\overline{\omega})+1/k$ for each $k$ such that the following inequality holds with $y^k:=\ell_{F}(x^k,s^k)$ and $u^k:=\ell_{G}(x^k,s^k)$:  
 \[
  {\rm dist}\big(0,\partial\widetilde{\Xi}(x^k,s^k,z^k)\big)
  <(1/k)\big(f(y^k,x^k,u^k,z^k)-f(\overline{y},\overline{x},\overline{u},\overline{z})\big)^{p}.
 \]
 From $\widetilde{\Xi}(\omega^k)<\widetilde{\Xi}(\overline{\omega})+1/k$, we infer that $\{(x^k,s^k,z^k)\}_{k\in\mathbb{N}}\subset{\rm dom}\,\widetilde{\Xi}$. Combining the above inequality with \eqref{subdiff-wXi}, for each $k\in\mathbb{N}$, there exist $v^k\in\partial\vartheta_1(y^k)$, $\xi^k\in\partial h(x^k)$ and $\zeta^k\in\partial\vartheta_2^*(-z^k)$ such that
 \begin{equation}\label{ineq1-KL}
  \left\|\left(\begin{matrix}
      \nabla F(s^k)v^k+\xi^k+\nabla G(s^k)z^k\\
   [D^2F(s^k)(x^k-s^k,\cdot)]^*v^k+[D^2G(s^k)(x^k-s^k,\cdot)]^*z^k\\	
			u^k-\zeta^k
  \end{matrix}\right)\right\|<\frac{1}{k}\Big(f(y^k,x^k,u^k,z^k)-\overline{f}\Big)^{p}
 \end{equation}
 with $\overline{f}=f(\overline{y},\overline{x},\overline{u},\overline{z})$.
 Since $f$ has the KL property of exponent $p$ at $(\overline{y},\overline{x},\overline{u},\overline{z})$, there exist $\delta'>0,\eta'>0$ and $c'>0$ such that for all $(y,x,u,z)\in\mathbb{B}((\overline{y},\overline{x},\overline{u},\overline{z}),\delta')\cap[\overline{f}<f<\overline{f}+\eta']$,
 \begin{equation}\label{vartheta-KL}
 {\rm dist}(0,\partial\!f(y,x,u,z))\ge c'\big(f(y,x,u,z)-\overline{f}\big)^{p}.
 \end{equation}
 From $\omega^k\to\overline{\omega}$ as $k\to\infty$, we have $x^k\in\mathcal{V}_{x}$ for sufficiently large $k$, where $\mathcal{V}_x$ is the same as the one in Assumption \ref{ass2}. Thus, by invoking Assumption \ref{ass0} and recalling that $\widetilde{\Xi}(\overline{\omega})<\widetilde{\Xi}(\omega^k)<\widetilde{\Xi}(\overline{\omega})+(1/k)$ for each $k\in\mathbb{N}$, it follows that  $(y^k,x^k,u^k,z^k)\in\mathbb{B}((\overline{y},\overline{x},\overline{u},\overline{z}),\delta')\cap[\overline{f}<f<\overline{f}+\eta']$ for all $k$ large enough. Together with the above inequalities \eqref{vartheta-KL} and \eqref{ineq1-KL}, it follows that for sufficiently large $k$,
  \begin{align}\label{ineq2-KL}
   &\left\|\left(\begin{matrix}
    \nabla F(s^k)v^k+\xi^k+\nabla G(s^k)z^k\\
   [D^2F(s^k)(x^k-s^k,\cdot)]^*v^k+[D^2G(s^k)(x^k-s^k,\cdot)]^*z^k\\		
			u^k-\zeta^k
     \end{matrix}\right)\right\|\nonumber\\
   &<\frac{1}{kc'}{\rm dist}\big(0,\partial\!f(y^k,x^k,u^k,z^k)\big)
   \le\frac{1}{kc'}\|(v^k,\xi^k,z^k,u^k\!-\zeta^k)\|,
  \end{align}
 where the second inequality is obtained by using $(v^k,\xi^k,z^k,u^k-\zeta^k)\in\partial\!f(y^k,x^k,u^k,z^k)$, implied by  $v^k\in\partial\vartheta_1(y^k)$, $\xi^k\in\partial h(x^k)$ and $\zeta^k\in\partial\vartheta_2^*(-z^k)$ for each $k\in\mathbb{N}$. Combining  $(v^k,\xi^k,z^k,u^k-\zeta^k)\in\partial\!f(y^k,x^k,u^k,z^k)$ with \eqref{vartheta-KL} yields that $\|(v^k,\xi^k,z^k,u^k-\zeta^k)\|\ge c'\big(f(y^k,x^k,u^k,z^k)-\overline{f}\big)^{p}>0$. For sufficiently large $k$, by letting $(\widetilde{v}^k,\widetilde{\xi}^k,\widetilde{z}^k,\widetilde{\eta}^k):=\frac{(v^k,\xi^k,z^k,u^k-\zeta^k)}{\|(v^k,\xi^k,z^k,u^k-\zeta^k)\|}$, from \eqref{ineq2-KL} it follows that 
  \begin{equation}\label{ineq3-KL}
   \left\|\left(\begin{matrix}
   \nabla F(s^k)\widetilde{v}^k+\widetilde{\xi}^k+\nabla G(s^k)\widetilde{z}^k\\
    [D^2F(s^k)(x^k-s^k,\cdot)]^*\widetilde{v}^k+[D^2G(s^k)(x^k-s^k,\cdot)]^*\widetilde{z}^k\\ \widetilde{\eta}^k
   \end{matrix}\right)\right\|\le\frac{1}{kc'}.
  \end{equation}
  From the boundedness of $\{(\widetilde{v}^k,\widetilde{\xi}^k,\widetilde{z}^k,\widetilde{\eta}^k)\}_{k\in\mathbb{N}}$, there necessarily exists an index set $\mathcal{K}\subset\mathbb{N}$ such that the subsequence $\{(\widetilde{v}^k;\widetilde{\xi}^k,\widetilde{z}^k,\widetilde{\eta}^k)\}_{k\in\mathcal{K}}$ is convergent with the limit $(\widetilde{v},\widetilde{\xi},\widetilde{z},\widetilde{\eta})$ satisfying $\|(\widetilde{v},\widetilde{\xi},\widetilde{z},\widetilde{\eta})\|\!=\!1$.
	
  We next argue that the sequence $\{(\xi^k,u^k-\zeta^k)\}_{k\in\mathbb{N}}$ is bounded. If not, by noting that $\{(v^k,z^k)\}_{k\in\mathbb{N}}$ is bounded, we infer from the unboundedness of $\{(\xi^k,u^k-\zeta^k)\}_{k\in\mathbb{N}}$ that $\widetilde{v}=0$, $\widetilde{z}=0$ and $\|(\widetilde{\xi},\widetilde{\eta})\|=1$. However, passing the limit $\mathcal{K}\ni k\to\infty$ to the inequality \eqref{ineq3-KL} and using $\widetilde{v}=0,\widetilde{z}=0$ leads to $\widetilde{\xi}=\widetilde{\eta}=0$, a contradiction to $\|(\widetilde{\xi},\widetilde{\eta})\|=1$. Thus, $\{(\xi^k,u^k-\zeta^k)\}_{k\in\mathbb{N}}$ is bounded. Note that $u^k\to \overline{u}$ as $k\to\infty$, so the sequence $\{\zeta^k\}_{k\in\mathbb{N}}$ is bounded. If necessary by taking a subsequence, we assume $\lim_{\mathcal{K}\ni k\to\infty}\zeta^k=\overline{\zeta}$. In view of the expression of $(\widetilde{v}^k,\widetilde{\xi}^k,\widetilde{z}^k)$ and $z^k\in-\partial\vartheta_2(\zeta^k)$, for sufficiently large $k$,
  \[
   (\widetilde{v}^k,\widetilde{\xi}^k,\widetilde{z}^k)\in{\rm pos}(\partial\vartheta_1(y^k))\times{\rm pos}(\partial h(x^k))\times{\rm pos}(-\partial\vartheta_2(\zeta^k)).
  \]
  Also, from Assumption \ref{ass0} (ii)-(iii), $\vartheta_1(y^k)\to\vartheta_1(\overline{y})$ and $\vartheta_2(\zeta^k)\to\vartheta_2(\overline{\zeta})$, which along with $\widetilde{\Xi}(\omega^k)\to\widetilde{\Xi}(\overline{\omega})$ implies $h(x^k)\to h(\overline{x})$. By the definition of outer limits of multifunctions (see \cite[Chapter 5.B]{RW98}), 
 \[  (\widetilde{v},\widetilde{\xi},\widetilde{z})\in\limsup_{y\to\overline{y}}{\rm pos}(\partial\vartheta_1(y))\times\limsup_{x\xrightarrow[h]{}\overline{x}}{\rm pos}(\partial h(x))\times\limsup_{\zeta\to\overline{\zeta}}(-\partial\vartheta_2(\zeta)).
 \]
 In addition, passing the limit $\mathcal{K}\ni k\to\infty$ to the inequality \eqref{ineq3-KL} yields that $ \nabla\!F(\overline{x})\widetilde{v}+\widetilde{\xi}+\nabla\!G(\overline{x})\widetilde{z}=0$ and $\widetilde{\eta}=0$. By using $\widetilde{\eta}=0$ and the expression of $\widetilde{\eta}^k$ and recalling that $u^k\to\overline{u}$ as $k\to\infty$, it is not hard to deduce that $\overline{\zeta}=\overline{u}$. Together with the above inclusion, it follows that
 \[
  0\!\ne\!(\widetilde{v},\widetilde{\xi},\widetilde{z})\!\in\!{\rm Ker}[\nabla\!F(\overline{x})\  \mathcal{I}\ \nabla G(\overline{x})]\cap\Big[\limsup_{y\to\overline{y}}{\rm pos}(\partial\vartheta_1(y))\times\limsup_{x\xrightarrow[h]{}\overline{x}}{\rm pos}(\partial h(x))\times\limsup_{\zeta\to\overline{u}}(-\partial\vartheta_2(\zeta))\Big],
 \]
 a contradiction to the assumption in \eqref{key-cond}. Therefore, $\widetilde{\Xi}$ satisfies the KL property of exponent $p$ at $\overline{\omega}$. 
 
 Notice that $\Xi(w)=\widetilde{\Xi}(x,s,z)+\|x-s\|_\mathcal{Q}^2+\chi_{\mathbb{S}_{+}}(\mathcal{Q})$ for $w=(x,s,z,\mathcal{Q})\in\mathbb{W}$, and $(\overline{\omega},\overline{\mathcal{Q}})$ with $\overline{\mathcal{Q}}\succ\underline{\gamma}\mathcal{I}$ is a critical point of $\Xi$. When $p\in[1/2,1)$, following the same arguments as those for \cite[Lemma 1]{LiuPanWY22}, we can prove that $\Xi$ satisfies the KL property of exponent $p$ at $(\overline{\omega},\overline{\mathcal{Q}})$. We complete the proof.
\end{proof}
\begin{remark}\label{remark42-KL}
 {\bf(a)} From the proof of Proposition \ref{prop-KL}, we see that its conclusions do not require the convexity of $\vartheta_1,\,\vartheta_2$ and $h$. The outer limit sets appearing in \eqref{key-cond} can be characterized especially when $\vartheta_1,h$ and $\vartheta_2$ are separable. Therefore, the condition \eqref{key-cond} is checkable. 
 
 \noindent
 {\bf(b)} Observe that $\widetilde{\Xi}=f\circ H$, where  $H:\mathbb{X}\times\mathbb{X}\times\mathbb{Z}\to\mathbb{Y}\times\mathbb{X}\times\mathbb{Z}\times\mathbb{Z}$ is a mapping defined by 
 \[
   H(x,s,z):=(\ell_{F}(x,s);x;\ell_{G}(x,s);z).
 \]
 It is easy to obtain that ${\rm Ker}\,\nabla H(\overline{x},\overline{x},\overline{z})={\rm Ker}\,[\nabla F(\overline{x})\ \ \mathcal{I}\ \ \nabla G(\overline{x})]$. In addition, from the definition of horizon subdifferential $\partial^{\infty}h(\overline{x})$ (see \cite[Definition 8.3]{RW98}), it is not hard to check that $\partial^{\infty}h(\overline{x})\subset\limsup_{x\xrightarrow[h]{}\overline{x}}{\rm pos}(\partial h(x))$ and the inclusion is strict. Then, the condition \eqref{key-cond} is stronger than 
 \[
   {\rm Ker}\,\nabla H(\overline{x},\overline{x},\overline{z})\cap[\partial^{\infty}\vartheta_1(\overline{y})\times\partial^{\infty}h(\overline{x})\times(-\partial^{\infty}\vartheta_2(\overline{u}))]=\{(0,0,0)\},
 \]
 which is precisely the metric regularity constraint qualification for the system $H(x,s,z)\in{\rm dom}\,f$ at $(\overline{x},\overline{x},\overline{z})$. Notice that Li and Pong proposed a criterion in \cite[Theorem 3.2]{LiPong18} to identify the KL property of exponent $p\in[0,1)$ for a general composite function, which is applicable to the function $\widetilde{\Xi}$. When $f$ has the KL property of exponent $p\in[0,1)$ at $(\overline{y},\overline{x},\overline{u},\overline{z})$, their criterion requires the surjectivity of $H'(\overline{x},\overline{x},\overline{z})$ or ${\rm Ker}\,[\nabla F(\overline{x})\ \ \mathcal{I}\ \ \nabla G(\overline{x})]=\{(0,0,0)\}$ to ensure that $\widetilde{\Xi}$ satisfies the KL property of exponent $p$ at $\overline{\omega}$. Since the outer limit sets in \eqref{key-cond} are generally not the whole space, the condition in Proposition \ref{prop-KL} for $\widetilde{\Xi}$ to have the KL property of exponent $p$ at $\overline{\omega}$ is weaker than that of \cite[Theorem 3.2]{LiPong18}. 
    
 \noindent
 {\bf(c)} Let $\psi(u,z):=\langle u,z\rangle+\vartheta_2^*(-z)$ for $(u,z)\in\mathbb{Z}\times\mathbb{Z}$. By the expression of $f$ in \eqref{ffun}, when $\psi,\vartheta_1$ and $h$ are the KL functions of exponent $p\in[1/2,1)$, the function $f$ is a KL function of exponent $p\in[1/2,1)$. By \cite[Proposition 2.2 (i) $\&$ Remark 2.2]{LiuPanWY22}, when $\psi,\,\vartheta_1$ and $h$ are KL functions, their KL property of exponent $p\in[1/2,1)$ is implied by the $1/(2p)$-subregularity of their subdifferential mappings. Thus, by invoking \cite[Proposition 1]{Robinson81}, we conclude that $\vartheta_1$ and $h$ are KL functions of exponent $p\in[1/2,1)$ if they are piecewise linear-quadratic (PLQ) KL functions, and $\psi$ is a KL function of exponent $p\in[1/2,1)$ if $\vartheta_2$ is a PLQ function and $\psi$ is a KL function. When they are not PLQ functions, one can use the criteria in \cite{Geferee11} to check the subregularity of their subdifferential mappings at the reference point.

 \noindent
 {\bf(d)} Combining part (b) with part (c), we conclude that $\widetilde{\Xi}$ satisfies the KL property of exponent $[1/2,1)$ at $\overline{\omega}=(\overline{x},\overline{x},\overline{z})$ when $\vartheta_1,h$ are PLQ functions with KL property, $\vartheta_2$ is a PLQ definable function in an o-minimal structure over the real field, and the condition \eqref{key-cond} holds. In this case, the mapping $\partial\widetilde{\Xi}$ is metrically subregular at $(\overline{\omega},0)$ if in addition $\overline{\omega}$ is a local minimizer of $\widetilde{\Xi}$, by noting that $\widetilde{\Xi}$ has the composite form considered in \cite{LiuPanWY22} and invoking \cite[Proposition 2.2 c]{LiuPanWY22}. To the best of our knowledge, there are no convenient rules for identifying the subregularity of  $\partial\widetilde{\Xi}$ at $(\overline{\omega},0)$.

\end{remark}
%------------------------------------------------------------------------------------------------
\begin{corollary}\label{corollary-KL}
 Consider \eqref{prob} with $\vartheta_2\equiv 0$ and any  $(\overline{x},\overline{x})\in(\partial\widetilde{\Xi})^{-1}(0)$ with $\widetilde{\Xi}(x,s)=\vartheta_1(\ell_{F}(x,s))+h(x)$ for $(x,s)\in\mathbb{X}\times\mathbb{X}$. Suppose that $F'$ is continuously differentiable on an neighborhood of $\overline{x}$, that 
 \begin{equation}\label{ffun1}
  f(y,s):=\vartheta_1(y)+h(s)\ \ {\rm for}\  (y,s)\in\mathbb{Y}\times\mathbb{X}
 \end{equation}
 has the KL property of exponent $p\in[0,1)$ at $(\overline{y},\overline{x})$ with $\overline{y}=F(\overline{x})$, and that the following condition holds
 \begin{equation}\label{key-cond1}
  {\rm Ker}\,[\nabla\!F(\overline{x})\ \ \mathcal{I}]\cap\big[\limsup_{y\to\overline{y}}{\rm pos}(\partial\vartheta_1(y))\times\limsup_{x\xrightarrow[h]{}\overline{x}}{\rm pos}(\partial h(x))\big]=\big\{(0,0)\big\}.
 \end{equation}
 Then, the function $\widetilde{\Xi}$ satisfies the KL property of exponent $p$ at $(\overline{x},\overline{x})$, so does the corresponding $\Xi$ at $(\overline{x},\overline{x},\overline{\mathcal{Q}})$ for any PD linear mapping $\overline{\mathcal{Q}}$ if $p\in[1/2,1)$.
\end{corollary}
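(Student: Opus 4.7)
Since setting $\vartheta_2\equiv 0$ collapses the $z$- and $u$-blocks of $\widetilde\Xi$ and of the condition in \eqref{key-cond}, the plan is to mirror the proof of Proposition \ref{prop-KL} in this reduced two-variable setting rather than try a direct substitution (which is awkward because the corollary has rewritten $\widetilde\Xi$ with two arguments instead of three). Arguing by contradiction, suppose the KL property of exponent $p$ fails at $(\overline x,\overline x)$; I would pick a sequence $(x^k,s^k)\to(\overline x,\overline x)$ with $\widetilde\Xi(\overline x,\overline x)<\widetilde\Xi(x^k,s^k)<\widetilde\Xi(\overline x,\overline x)+1/k$ and ${\rm dist}(0,\partial\widetilde\Xi(x^k,s^k))<(1/k)(\widetilde\Xi(x^k,s^k)-\widetilde\Xi(\overline x,\overline x))^p$, and then extract from the chain rule of Lemma \ref{chain-rule} vectors $v^k\in\partial\vartheta_1(y^k)$ (with $y^k:=\ell_F(x^k,s^k)$) and $\xi^k\in\partial h(s^k)$ satisfying
\[
\bigl\|\bigl([D^2F(x^k)(s^k\!-\!x^k,\cdot)]^*v^k;\ \nabla F(x^k)v^k+\xi^k\bigr)\bigr\|<\tfrac{1}{k}\bigl(f(y^k,s^k)-\overline f\bigr)^p,
\]
where $\overline f:=f(\overline y,\overline x)$ and the identity $\widetilde\Xi(x^k,s^k)=f(y^k,s^k)$ is built into the definition of $f$.

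Next, because $f$ is separable so that $\partial f(y,s)=\partial\vartheta_1(y)\times\partial h(s)$, the pair $(v^k,\xi^k)$ lies in $\partial f(y^k,s^k)$, and the KL property of $f$ at $(\overline y,\overline x)$ yields $\|(v^k,\xi^k)\|\ge c'(f(y^k,s^k)-\overline f)^p>0$ for all sufficiently large $k$. Dividing by this norm produces normalized $(\widetilde v^k,\widetilde\xi^k)$ for which the subdifferential bound is rescaled to $1/(kc')$. Extracting a subsequence whose limit $(\widetilde v,\widetilde\xi)$ has unit norm and passing to the limit kills the $D^2F$-term (since $s^k-x^k\to 0$) and forces $\nabla F(\overline x)\widetilde v+\widetilde\xi=0$, i.e., $(\widetilde v,\widetilde\xi)\in{\rm Ker}([\nabla F(\overline x)\ \ \mathcal I])$. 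The outer semicontinuity of $\partial\vartheta_1$ and $\partial h$ together with $\widetilde v^k\in{\rm cone}(\partial\vartheta_1(y^k))$ and $\widetilde\xi^k\in{\rm cone}(\partial h(s^k))$ then places $(\widetilde v,\widetilde\xi)$ in ${\rm cl}\,{\rm cone}(\partial f(\overline y,\overline x))$, contradicting \eqref{key-cond1}.

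To transfer the KL property from $\widetilde\Xi$ to $\Xi$ I would repeat the final step of the proof of Proposition \ref{prop-KL}: invoke \cite[Lemma 1]{LiuPanWY22} to conclude that augmenting $\widetilde\Xi$ by the PD-operator variable $\mathcal Q$, the smooth quadratic term $\|s-x\|_{\mathcal Q}^2$ and the indicator $\delta_{\mathbb S_+}(\mathcal Q)$ preserves the KL exponent at the critical point $(\overline x,\overline x,\overline{\mathcal Q})$, because the extra contributions vanish there and their subdifferentials are well behaved. The main obstacle I anticipate is the bookkeeping that legitimizes the normalization step: one must guarantee $(v^k,\xi^k)\ne 0$ and that $(y^k,s^k)$ stays in the KL neighborhood of $f$ for all large $k$. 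Both reduce to the strict inequality $f(y^k,s^k)>\overline f$ (automatic from the failing sequence) together with $\ell_F(\overline x,\overline x)=F(\overline x)$ and the continuity of $F$ guaranteed by Assumption \ref{ass0}, but this is the point at which the argument is most delicate.
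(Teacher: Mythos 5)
Your proposal is correct and follows essentially the same route as the paper: the corollary is obtained by specializing the contradiction--normalization--compactness argument of Proposition \ref{prop-KL} to the case $\vartheta_2\equiv 0$, where the $z$- and $u$-blocks (and hence the separate boundedness argument for $\{(\xi^k,u^k-\zeta^k)\}$) drop out, and the passage from $\widetilde{\Xi}$ to $\Xi$ is again handled by \cite[Lemma 1]{LiuPanWY22}. The only point worth noting is that a possibly unbounded $\{\xi^k\}$ is disposed of automatically here, since it would force $\widetilde v=0$ and then $\widetilde\xi=0$ in the limit of the second block, contradicting the unit normalization.
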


Note that the function $f$ defined in \eqref{ffun1} has the KL property of exponent $1/2$ at $(F(\overline{x}),\overline{x})$ iff $\vartheta_1$ and $h$ have the KL property of exponent $1/2$ at $F(\overline{x})$ and $\overline{x}$, respectively. By combining \cite[Theorem 5 (ii)]{Bolte17} and \cite[Theorem 3.3]{Artacho08}, the KL property of $\vartheta_1$ with exponent $1/2$ at $\overline{y}=F(\overline{x})$ is equivalent to the subregularity of $\partial\vartheta_1$ at $(\overline{y},0)$, and that of $h$ is equivalent to the subregularity of $\partial h$ at $(\overline{x},0)$. Such a condition is equivalent to requiring that $\mathop{\arg\min}_{(y,s)\in\mathbb{Y}\times\mathbb{X}}f(y,s)$ is the set of local weak sharp minima of order 2 for $f$, which for $h\equiv 0$ is the one used in \cite[Theorem 20]{HuYang16} to achieve a local R-linear rate. 

Next we take a closer look at the condition \eqref{key-cond1} and discuss its relation with the regularity used in \cite{HuYang16} for the problem \eqref{prob} with $\vartheta_2\equiv 0$. For this purpose, for a closed convex set $S\subset\mathbb{Y}$, we denote by $S^{\ominus}$ its negative polar, i.e., $S^{\ominus}:=\{y^*\in\mathbb{Y}\ |\ \langle y^*,y\rangle\le 0\ {\rm for\ all}\ y\in S\}$. Define the sets
\[
 C:=\mathop{\arg\min}_{y\in\mathbb{Y}}\vartheta_1(y)\ \ {\rm and}\ \ D:=\mathop{\arg\min}_{x\in\mathbb{X}}h(x). 
 \]
 We claim that $\limsup_{y\to\overline{y}}{\rm pos}(\partial\vartheta_1(y))\subset [C-\overline{y}]^{\ominus}$ and $\limsup_{x\xrightarrow[h]{}\overline{x}}{\rm pos}(\partial h(x))\subset [D-\overline{x}]^{\ominus}$. Indeed, pick any $\xi\in\limsup_{y\to\overline{y}}{\rm pos}(\partial\vartheta_1(y))$. There exist $y^k\to\overline{y}$ and $\xi^k\to\xi$ with $\xi^k\in{\rm pos}(\partial\vartheta_1(y^k))$ for each $k\in\mathbb{N}$. Obviously, for each $k\in\mathbb{N}$, there exists $t_k\ge 0$ and $v^k\in\partial\vartheta_1(y^k)$ such that $\xi^k=t_kv^k$. Then, for each $z\in C$, it holds 
$0\ge \vartheta_1(z)-\vartheta_1(y^k)\ge \langle v^k,z-y^k\rangle$ for each $k\in\mathbb{N}$, which implies $\langle\xi^k,z-y^k\rangle\le 0$ for each $k\in\mathbb{N}$. Passing the limit $k\to\infty$ to this inequality leads to $\langle \xi,z-\overline{y}\rangle\le 0$. Consequently, $\xi\in[C-\overline{y}]^{\ominus}$ follows the arbitrariness of $z\in C$. Thus, $\limsup_{y\to\overline{y}}{\rm pos}(\partial\vartheta_1(y))\subset [C-\overline{y}]^{\ominus}$. Similarly, the inclusion $\limsup_{x\xrightarrow[h]{}\overline{x}}{\rm pos}(\partial h(x))\subset [D-\overline{x}]^{\ominus}$ also holds. It is worth pointing out that the inclusion is generally strict; for example, consider $\vartheta_1(y)=|y_1|+y_2^2$ for $y\in\mathbb{R}^2$ and $F(x)\!=\!(x,-x)^{\top}$ for $x\in\mathbb{R}$. For $\overline{x}\!=\!1$, we have $\limsup_{y\to\overline{y}}{\rm pos}(\partial\vartheta_1(y))\subset\mathbb{R}_{+}\times\mathbb{R}_{-}$, but $[C\!-\!F(\overline{x})]^{\ominus}\!=\!\{y\in\mathbb{R}^2\ |\ y_2-y_1\le 0\}$. Thus, from the above discussion, we conclude that  the condition \eqref{key-cond1} is weaker than the following one
 \[
  {\rm Ker}\big([\nabla\!F(\overline{x})\ \ \mathcal{I}]\big)\cap\big([C-F(\overline{x})]^{\ominus}\times[D-\overline{x}]^{\ominus}\big)=\big\{(0,0)\big\},
 \]
which is precisely the regularity condition used in \cite[Theorem 18]{HuYang16} and \cite[Section 3]{Burke95}.

 Suppose that $h\equiv 0$. For any given $x\in\mathbb{X}$, let $D(x):=\{d\in\mathbb{X}\,|\,F(x)+F'(x)d=0\}$. From \cite[Definition 7 (c)]{HuYang16}, a vector $\overline{x}\in\mathbb{X}$ is called a quasi-regular point of inclusion $F(x)\in C$ if there exist $r>0$ and $\beta_{r}>0$ such that for all $x\in\mathbb{B}(\overline{x},r)$, ${\rm dist}(0,D(x))\le\beta_{r}{\rm dist}(F(x),C)$. Now there is no clear implication relation between the condition \eqref{key-cond1} and the quasi-regularity condition. When $F$ is polyhedral but $C\ne\{0\}$, there are examples for which the quasi-regularity condition does not hold; for example, $\overline{x}=1$, $F(x)=(x;0)^{\top}$ for $x\in\mathbb{R}$ and $\vartheta_1(y)=(y_1\!-\!1)^4+|y_2|$ for $y\in\mathbb{R}^2$. The condition \eqref{key-cond1} does not hold for this example either. However, when $F$ is polyhedral and $\vartheta_1$ is a PLQ convex function, $\widetilde{\Xi}$ is necessarily a KL function of exponent $1/2$ by \cite[Theorem 5 (ii)]{Bolte17}, \cite[Theorem 3.3]{Artacho08} and \cite[Proposition 1]{Robinson81}, so is the function $\Xi$ by the second part of Corollary \ref{corollary-KL}. When $F$ is non-polyhedral, there are examples for which the condition \eqref{key-cond1} holds but the quasi-regularity condition does not hold; for instance, $F(x)=(x^2,-x)^{\top}$ for $x\in\mathbb{R}$ and $\vartheta_1(y)=\|y\|^2$ for $y\in\mathbb{R}^2$. It is easy to check that \eqref{key-cond1} holds at  $\overline{x}=0$, which along with the strong convexity of $\vartheta_1$ shows that $\widetilde{\Xi}$ and $\Theta_1$ are the KL functions of exponent $1/2$, but $\overline{x}$ is not a quasi-regular point of the inclusion $F(x)\in C$ because $D(x)=\{d\in\mathbb{R}\ |\ (x^2+2xd;-x-d)=0\}=\emptyset$ for all $x\ne 0$ sufficiently close to $\overline{x}$. Now it is unclear whether there is an example with nonlinear $F$ for which the quasi-regularity holds but the condition \eqref{key-cond1} does not hold.
% \end{remark}
%-----------------------------------------------------------------------------------------
\section{Dual PPA armed with SNCG to solve subproblems}\label{sec5}

 The efficiency of Algorithm \ref{iLPA} depends heavily on the computation of subproblem \eqref{subprobkj}. In this section, we develop an efficient solver (named dPPASN) to compute subproblem \eqref{subprobkj} by applying the proximal point algorithm (PPA) armed with semismooth Newton to solve its dual. To this end, for a proper closed convex function $f:\mathbb{X}\to\overline{\mathbb{R}}$ and a constant $\tau>0$, let $\mathcal{P}_{\!\tau f}$ and $e_{\tau f}$ denote the proximal mapping and the Moreau envelope of $f$ associated with $\tau$, and for each $k\in\mathbb{N}$ and $j\in[j_k]$, introduce the notation:
 \begin{equation}\label{Akmap}
  \mathcal{A}_k:=F'(x^k),\,c^k:=F(x^k),\,b^k:=\mathcal{A}_kx^k-c^k,\,u^k:=\nabla G(x^k)\xi^k\ \ {\rm and}\ \   \mathcal{Q}_{k,j}:=\gamma_{k,j}\mathcal{I}+\alpha_k\mathcal{A}_k^*\mathcal{A}_k
 \end{equation}
 where $\alpha_k>0$ is specified in the experiments. The structure of $\mathcal{Q}_{k,j}$ in \eqref{Akmap} ensures that the subproblem \eqref{subprobkj} has a simple dual. Indeed, with the notation in \eqref{Akmap}, it can equivalently be written as 
 \begin{align}\label{Esubprobkj}
 &\min_{x\in\mathbb{X},y\in\mathbb{Y}}\!\vartheta_1(y)+\langle u^k,x-x^k\rangle+h(x)+({\gamma_{k,j}}/{2})\|x-x^k\|^2+({\alpha_k}/{2})\|y-c^k\|^2-\Theta_2(x^k) \nonumber\\
 &\ \ \ {\rm s.t.}\ \ \mathcal{A}_kx-b^k=y
\end{align}
whose dual, after an elementary calculation, takes the following form
 \begin{equation}\label{dEsubprobkj}
 \min_{\zeta\in\mathbb{Y}}\ \Psi_{k,j}(\zeta):=\frac{\|\zeta\|^2}{2\alpha_k}-e_{\!\alpha_k^{-1}\vartheta_1}\big(\alpha_k^{-1}\zeta+c^k\big)+\frac{\|\mathcal{A}_k^*\zeta+u^k\|^2}{2\gamma_{k,j}} -e_{\gamma_{k,j}^{-1}h}\big(x^k-\gamma_{k,j}^{-1}(\mathcal{A}_k^*\zeta+u^k)\big)+\Theta_2(x^k).
 \end{equation}
 Due to the strong convexity of \eqref{subprobkj} or \eqref{Esubprobkj}, the strong duality holds for \eqref{Esubprobkj} and \eqref{dEsubprobkj}. Since $\Psi_{k,j}$ is a smooth convex function with Lipschitz continuous gradient, seeking an optimal solution of the dual problem \eqref{dEsubprobkj} is equivalent to finding a root to the nonsmooth system
 \begin{equation}\label{system}
  0=\nabla\Psi_{k,j}(\zeta)=\mathcal{P}_{\!\alpha_k^{-1}\vartheta_1}(\alpha_k^{-1}\zeta+c^k)+b^k-\gamma_{k,j}^{-1}\mathcal{A}_k\mathcal{P}_{\!\gamma_{k,j}^{-1}h}(x^k-\gamma_{k,j}^{-1}(\mathcal{A}^*_{k}\zeta+u^k)).
 \end{equation}
 That is, with any $\zeta^*\in(\nabla\Psi_{k,j})^{-1}(0)$, one can recover the unique optimal solution $(\overline{x}^{k,j},\overline{y}^{k,j})$ of \eqref{Esubprobkj} via

 \begin{equation}\label{xbarkj}
  \overline{x}^{k,j}=\mathcal{P}_{\!\gamma_{k,j}^{-1}h}(x^k-\gamma_{k,j}^{-1}(\mathcal{A}^*_{k}\zeta^{*}+u^k)) \ \ {\rm and}\ \ \overline{y}^{k,j}=\mathcal{P}_{\!\alpha_k^{-1}\vartheta_1}(\alpha_k^{-1}\zeta^{*}+c^k).
 \end{equation}
 
 From \cite[Theorem 1]{Bolte09} and \cite[Proposition 3.1]{Ioffe09}, when $\vartheta_1$ and $h$ are definable in the same o-minimal structure on the real field $(\mathbb{R},+,\cdot)$, the system \eqref{system} is semismooth. However, a direct application of the semismooth Newton method to it faces the difficulty caused by the potential singularity of the generalized Hessian of $\Psi_{k,j}$. Inspired by this, we apply the inexact PPA armed with the semismooth Newton method to solving \eqref{dEsubprobkj}, whose iterations are described as follows.
%----------------------------------------------------------------------------------
\begin{algorithm}[h]
 \renewcommand{\thealgorithm}{A}
 \caption{\label{PPASN}{\bf\ Inexact dPPA with semismooth Newton (dPPASN)}}
 \textbf{1:} Initialization: Fix $k\in\mathbb{N}$ and $j\in[j_k]$. Choose $\tau_0>\underline{\tau}>0, \varsigma\in(0,1)$ and an initial 
 
 \ \ \  \  $\zeta^0\in\mathbb{Y}$.\\
 \textbf{2: For} {$l=0,1,2,\ldots$} \\
   \textbf{3:       }\textbf{    }\textbf{    }  Seek an inexact minimizer $\zeta^{l+1}$ of \eqref{PPA-subprob} with Algorithm \ref{SNCG} described later:
    \begin{equation}\label{PPA-subprob}
    \min_{\zeta\in\mathbb{Y}}\ \widetilde{\Psi}_{k,j}(\zeta):=\Psi_{k,j}(\zeta)+({\tau_{l}}/{2})\|\zeta-\zeta^{l}\|^2.
    \end{equation}
   		
  \textbf{4:    }\textbf{    }\textbf{    }  Update the parameter $\tau_{l+1}$ by $\tau_{l+1}=\min\{\underline{\tau},\varsigma\tau_{l}\}$. \\
 \textbf{5: end (For)}
\end{algorithm}

 From the weak duality theorem between \eqref{Esubprobkj} and \eqref{dEsubprobkj}, for each $l\in\mathbb{N}$, $-\Psi_{k,j}(\zeta^{l})\le q_{k,j}(\overline{x}^{k,j})$. This means that, when Algorithm \ref{PPASN} is applied to solve the dual \eqref{dEsubprobkj}, if some iterate $\zeta^{l}$ is such that  
 \begin{equation}\label{stop-cond-dPPASN}
 q_{k,j}(x^{k,j}(\zeta^{l}))+\Psi_{k,j}(\zeta^{l})<\frac{\mu_k}{2}\|x^{k,j}-x^k\|^2\ {\rm for}\ x^{k,j}(\zeta^{l}):=\!\mathcal{P}_{\!\gamma_{k,j}^{-1}h}(x^k-\gamma_{k,j}^{-1}(\mathcal{A}_{k}^*\zeta^{l}+u^k)),
 \end{equation}
 then 
 $x^{k,j}(\zeta^{l})$ and $q_{k,j}^{\rm LB}=-\Psi_{k,j}(\zeta^{l})$ satisfy the inexactness condition \eqref{inexact-cond}. Inspired by this, we use \eqref{stop-cond-dPPASN} as the stop condition of Algorithm \ref{PPASN} for numerical tests. It is worth pointing out that, when the sequence ${\rm dist}(\zeta^{l},(\nabla\Psi_{k,j})^{-1}(0))$ yielded by Algorithm \ref{PPASN} converges to $0$ with a Q-linear rate, the corresponding $\{x^{k,j}(\zeta^{l})\}_{l\in\mathbb{N}}$ converges to $\overline{x}^{k,j}$ defined in \eqref{xbarkj} with a R-linear rate by noting that  
 \[
  \|x^{k,j}(\zeta^{l+1})-\overline{x}^{k,j}\|\le \gamma_{k,j}^{-1}\|\mathcal{A}_k^*\|\,{\rm dist}(\zeta^{l+1},(\nabla\Psi_{k,j})^{-1}(0)). 
 \]
 From \cite[Theorem 1]{Roc76} and \cite[Theorem 2.1]{Luque84}, we obtain the following convergence result. 
%------------------------------------------------------------
\begin{theorem}\label{theorem-AlgA}
 Fix any $k\in\mathbb{N}$ and $j\in[j_k]$. For each $l\in\mathbb{N}$, let $x^{k,j}(\zeta^{l})$ be defined as in \eqref{stop-cond-dPPASN}, where $\{\zeta^{l}\}_{l\in\mathbb{N}}$ is the sequence generated by Algorithm \ref{PPASN} with the following inexactness criterion
 \[
  \|\nabla\widetilde{\Psi}_{k,j}(\zeta^{l+1})\|\le\delta_l\tau_l\|\zeta^{l+1}-\zeta^{l}\|\quad{\rm with}\ \sum_{l=1}^{\infty}\delta_{l}<\infty.
 \]
 Then, the sequence $\{\zeta^{l}\}_{l\in\mathbb{N}}$ converges to some $\zeta^{k,*}\in(\nabla\Psi_{k,j})^{-1}(0)$. If the mapping $\nabla\Psi_{k,j}$ is metrically subregular at $\zeta^{k,*}$ for $0$ with modulus $\kappa$, then the sequence ${\rm dist}(\zeta^{l},(\nabla\Psi_{k,j})^{-1}(0))$ converges to $0$ with a Q-linear rate bounded from above by $\kappa/\sqrt{\kappa^2+\underline{\tau}^{-2}}$, and the sequence $\{x^{k,j}(\zeta^{l})\}_{l\in\mathbb{N}}$ converges to $\overline{x}^{k,j}$ in \eqref{xbarkj} with a R-linear rate. 
\end{theorem}
\begin{remark}\label{remark-AlgA}
 When $\vartheta_1$ and $h$ are the PLQ convex functions, the graph of $\partial\vartheta_1$ and $\partial h$ are the union of finitely many polyhedral convex sets, so are the mappings $\mathcal{P}_{\!\alpha_k^{-1}\vartheta_1}$ and $\mathcal{P}_{\!\gamma_{k,j}^{-1}h}$. By the expression of $\nabla\Psi_{k,j}$ in \eqref{system}, the  graph of $(\nabla\Psi_{k,j})^{-1}$ is also the union of finitely many polyhedral convex sets, so is locally upper Lipschitzian at $0$ by \cite[Proposition 1]{Robinson81}. Then, $\nabla\Psi_{k,j}$ is metrically subregular at $\zeta^{k,*}$ for $0$. 
\end{remark}

 Since $\widetilde{\Psi}_{k,j}$ in \eqref{PPA-subprob} has a Lipschitz continuous gradient mapping, we define its generalized Hessian at $\zeta$ by the Clarke Jacobian of $\nabla\widetilde{\Psi}_{k,j}$ at $\zeta$, i.e., $\partial^2\widetilde{\Psi}_{k,j}(\zeta)\!:=\partial_{C}(\nabla\widetilde{\Psi}_{k,j})(\zeta)$. From \cite[Page 75]{Clarke83}, it follows that $\partial^2\widetilde{\Psi}_{k,j}(\zeta)d=\widehat{\partial}^2\widetilde{\Psi}_{k,j}(\zeta)d$ for all $d\in\mathbb{Y}$ with
 \[
  \widehat{\partial}^2\widetilde{\Psi}_{k,j}(\zeta)
 \!:=\!\tau_{l}\mathcal{I}+\alpha_k^{-1}\partial_{C}\mathcal{P}_{\!\alpha_k^{-1}\vartheta_1}\big(\alpha_k^{-1}\zeta+c^k\big)
+\gamma_{k,j}^{-1}\mathcal{A}_k\partial_{C}\mathcal{P}_{\!\gamma_{k,j}^{-1}h}
\big(x^k\!-\!\gamma_{k,j}^{-1}(\mathcal{A}_k^*\zeta+u^k)\big)\mathcal{A}_k^{*}.
\]
By mimicking the proof in \cite[Section 3.3.4]{Ortega70}, every $\mathcal{U}\in\partial_{C}\mathcal{P}_{\!\alpha_k^{-1}\vartheta_1}\big(\alpha_k^{-1}\zeta+c^k\big)$ and every $\mathcal{V}\in\partial_{C}\mathcal{P}_{\!\gamma_{k,j}^{-1}h}
\big(x^k-\gamma_{k,j}^{-1}(\mathcal{A}_k^*\zeta+u^k)\big)$ are positive semidefinite. Along with $\tau_l>0$ for each $l\in\mathbb{N}$, the operator $\tau_l\mathcal{I}+\alpha_{k}^{-1}\mathcal{U}+\gamma_{k,j}^{-1}\mathcal{A}_k\mathcal{V}\mathcal{A}_k^*$ is positive definite, so every element in $\widehat{\partial}^2\widetilde{\Psi}_{k,j}(\zeta)$ is nonsingular. Thus, Algorithm \ref{SNCG} described as follows is well defined and has the following convergence result. 

\begin{theorem}
 (see \cite[Theorem 3.5]{ZhaoST10}) Fix any $k\in\mathbb{N},j\in[j_k]$ and $\tau_l\in[\underline{\tau},\tau_0]$. Let $\overline{\zeta}^{l+1}$ be an accumulation point of the infinite sequence $\{\zeta^{\nu}\}_{\nu\in\mathbb{N}}$ generated by Algorithm \ref{SNCG} for solving \eqref{PPA-subprob}. Then the whole sequence $\{\zeta^{\nu}\}_{\nu\in\mathbb{N}}$ converges to $\overline{\zeta}^{l+1}$ and $\|\zeta^{\nu+1}-\overline{\zeta}^{l+1}\|=O(\|\zeta^{\nu}-\overline{\zeta}^{l+1}\|^{1+\varsigma})$.
\end{theorem}

%-------------------------------------------------------------------------------------
\begin{algorithm}[h]
 \renewcommand{\thealgorithm}{A.1}
 \caption{\label{SNCG}{\bf\ Semismooth Newton method}}
 \textbf{1:}  Initialization: Fix $k\in\mathbb{N},j\in[j_k]$ and $\tau_l\in[\underline{\tau},\tau_0]$. Choose  $\underline{\eta},\beta\in(0,1),\varsigma\in(0,1]$ and 
 
 \ \ \ \ $0<c_1<c_2<\frac{1}{2}$. 
	\\
 \textbf{2: For} {$\nu=0,1,2,\ldots$} \\
 \textbf{3:       }\textbf{    }Choose  $\mathcal{U}^{\nu}\in\partial_{C}\mathcal{P}_{\!\alpha_k^{-1}\vartheta_1}\big(\alpha_k^{-1}\zeta^{\nu}+c^k\big)$
		and $\mathcal{V}^{\nu}\in\partial_{C}\mathcal{P}_{\!\gamma_{k,j}^{-1}h}\big(x^k-\gamma_{k,j}^{-1}(\mathcal{A}_k^*\zeta^{\nu}+u^k)\big)$.
		
\textbf{4:    }  Apply the practical conjugate gradient (CG) algorithm to find an approximate 

\ \ \ \ \  \ solution $d^{\nu}$ to
		\[
		\mathcal{W}^{\nu}d=-\nabla\widetilde{\Psi}_{k,l}(\zeta^{\nu})\ \ {\rm with}\ \ 		\mathcal{W}^{\nu}=\tau_l\mathcal{I}+\alpha_{k}^{-1}\mathcal{U}^{\nu}+\gamma_{k,j}^{-1}\mathcal{A}_k\mathcal{V}^{\nu}\mathcal{A}_k^*
		\]
	 \hspace{0.6cm} such that  $\|\mathcal{W}^{\nu}d^{\nu}+\nabla\widetilde{\Psi}_{k,l}(\zeta^{\nu})\|\le\min(\underline{\eta},\|\nabla\widetilde{\Psi}_{k,l}(\zeta^{\nu})\|^{1+\varsigma})$.

 	\textbf{5:   }\ \ Let $m_{\nu}$ be the smallest nonnegative integer $m$ such that
		\begin{align*}
		\widetilde{\Psi}_{k,l}(\zeta^{\nu}\!+\!\beta^md^{\nu})\leq\widetilde{\Psi}_{k,l}(\zeta^{\nu})+c_1\beta^m\langle\nabla\widetilde{\Psi}_{k,l}(\zeta^{\nu}),d^{\nu}\rangle,\\
			\vert\langle\nabla\widetilde{\Psi}_{k,l}(\zeta^{\nu}\!+\!\beta^md^{\nu}),d^{\nu}\rangle\vert\le c_2\vert\langle\nabla\widetilde{\Psi}_{k,l}(\zeta^{\nu}),d^{\nu}\rangle\vert.
		\end{align*}
		\textbf{6:   }\ \ Set $\zeta^{\nu+1}=\zeta^{\nu}+\beta^{m_{\nu}}d^{\nu}$. \\
 	\textbf{7: end (For)}
\end{algorithm}

 Note that the local convergence rate of $\{x^k\}_{k\in\mathbb{N}}$ has no direct relation with that of Algorithm A. To understand this better, we can consider Algorithm 1 without involving the inner for-end loop, which aims at seeking a tight upper estimation for ${\rm lip}\,\Theta(x^k)$ and then a good local majorization of $\Phi$ at $x^k$. Then, it is clear to see that the convergence rate of $\{x^k\}_{k\in\mathbb{N}}$ mainly depends on the KL property of the potential function $\Xi$ with exponent $p$. A smaller $p$ leads to a better convergence rate. Of course, the iteration complexity of Algorithm A has a great influence on the overall complexity of Algorithm 1.
Now let us take a closer look at the complexity for solving the dual problem \eqref{dEsubprobkj}. From the above discussions, the evaluations of $\widetilde{\Psi}_{k,j}$ and $\nabla\widetilde{\Psi}_{k,j}$ at $\xi$ are relatively simple since they directly call the outer $F(x^k),G(x^k)$ and $F'(x^k),G'(x^k)$ and do not involve any computation on $F$ and $G$ and their Jacobian matrices $F'$ and $G'$ at new points. In that sense, the computation cost for one step of dPPASN mainly comes from the iteration complexity of Algorithm \ref{SNCG}. Consider that Algorithm \ref{SNCG} admits a local superlinear convergence rate. The evaluation of a semismooth Newton step dominates the computation cost of one step of dPPASN. When the practical CG (see \cite[Algorithm 1]{ZhaoST10}) is applied to seek a semismooth Newton step, the computation complexity is about $O(N^2)$, where $N$ is the dimension of $d^{\nu}$.     

%-------------------------------------------------------------------------------------------
\section{Numerical experiments}\label{sec6}

We validate the efficiency of Algorithm \ref{iLPA} armed with dPPASN (iLPA, for short) by applying it to solve DC programs with nonsmooth components and matrix completion with outliers under non-uniform sampling. All tests are performed in MATLAB 2024a on a laptop computer running on 64-bit Windows Operating System with an Intel(R) Core(TM) i9-13905H CPU 2.60GHz and 32 GB RAM. 
 
%------------------------------------------------------------------------

\subsection{Choice of parameters $\underline{\gamma}$ and $\mu_k$}\label{sec6.1}

The parameter $\underline{\gamma}$, as a uniform lower bound for all $\gamma_{k,j}$, ensures the uniformly positive definiteness of all $\mathcal{Q}_{k,j}$ and is mainly used for theoretical analysis. In view of this, we choose $\underline{\gamma}=0.01$. Recall that the convergence analysis of Algorithm \ref{iLPA} requires the positive real number sequence $\{\mu_k\}_{k\in\mathbb{N}}$ to satisfy $\mu_k\in(0,\underline{\gamma}/5]$ for all $k\in\mathbb{N}$; see Remark \ref{remark-alg} (a). An immediate choice for it is $\mu_k=c_{\mu}k^{-1}$ with $c_{\mu}>0$. Figure \ref{figmuk1} shows how the value of $c_{\mu}$ affects the number of iterations of Algorithm \ref{iLPA} and the average number of iterations of Algorithm \ref{PPASN}. From the discussion at the end of Section \ref{sec5}, the latter actually reflects the average number of function evaluations at each iteration of Algorithm \ref{iLPA}. The curves in Figure \ref{figmuk1} are plotted by applying Algorithm \ref{iLPA} with $\mu_k=c_{\mu}k^{-1}$ to solve the problem \eqref{SCAD-loss}, where $\mathcal{A}$ and $b$ are generated randomly in the same way as in Section \ref{sec6.2} under noise of type I with $n_1=n_2=1000,r^*=15,SR=0.15$. We see that, Algorithm \ref{iLPA} with $\mu_k$ for $c_{\mu}\in\{10,10^2\}$ requires fewer iterations than it with $\mu_k$ for $c_{\mu}\in\{10^3,5\times 10^3, 10^4, 5\times10^4, 10^5, 5\times10^5, 10^6, 5\times10^6, 10^7\}$, but Algorithm \ref{PPASN} corresponding to the former needs more iterations.  Consider that the semismooth Newton step is relatively expensive. Inspired by Figure \ref{figmuk1}, we suggest $\mu_k=10^6/k$ for testing matrix completion with outliers. For Algorithm \ref{iLPA} with $\mu_k=10^6/k$, Figure \ref{figmuk2} (a) shows the number of subproblems solved by each iteration of Algorithm \ref{iLPA}, and Figure \ref{figmuk2} (b) plots the maximum number of iterations required by  Algorithm A to solve the total $j_k+1$ subproblems. We see that now each step of Algorithm \ref{iLPA} needs solving only one subproblem with Algorithm \ref{PPASN}, and the maximum number of iterations for Algorithm \ref{PPASN} is at most $3$.       

\begin{figure}[h]
\centering
\includegraphics[width=\textwidth]{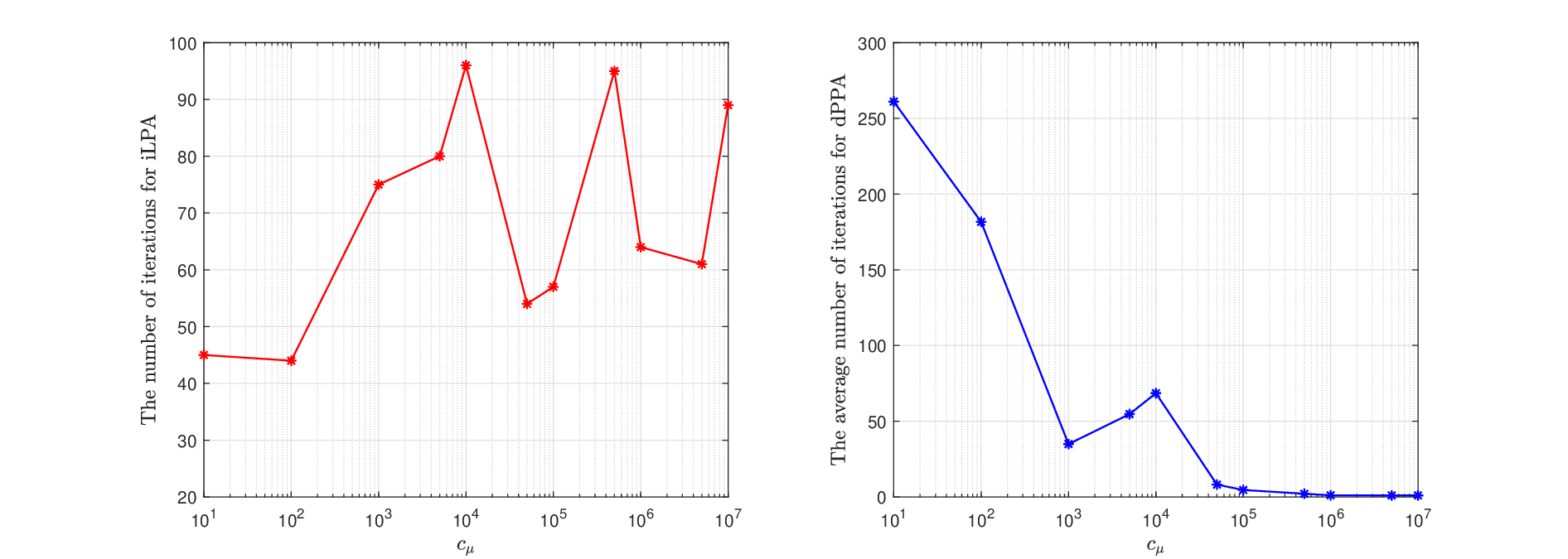}
 \caption{The performance of iLPA with $\mu_k=c_{\mu}k^{-1}$ for different $c_{\mu}$ and that of its inner solver dPPASN}
\label{figmuk1}
\end{figure}
\begin{figure}[h]
\centering
\includegraphics[width=\textwidth]{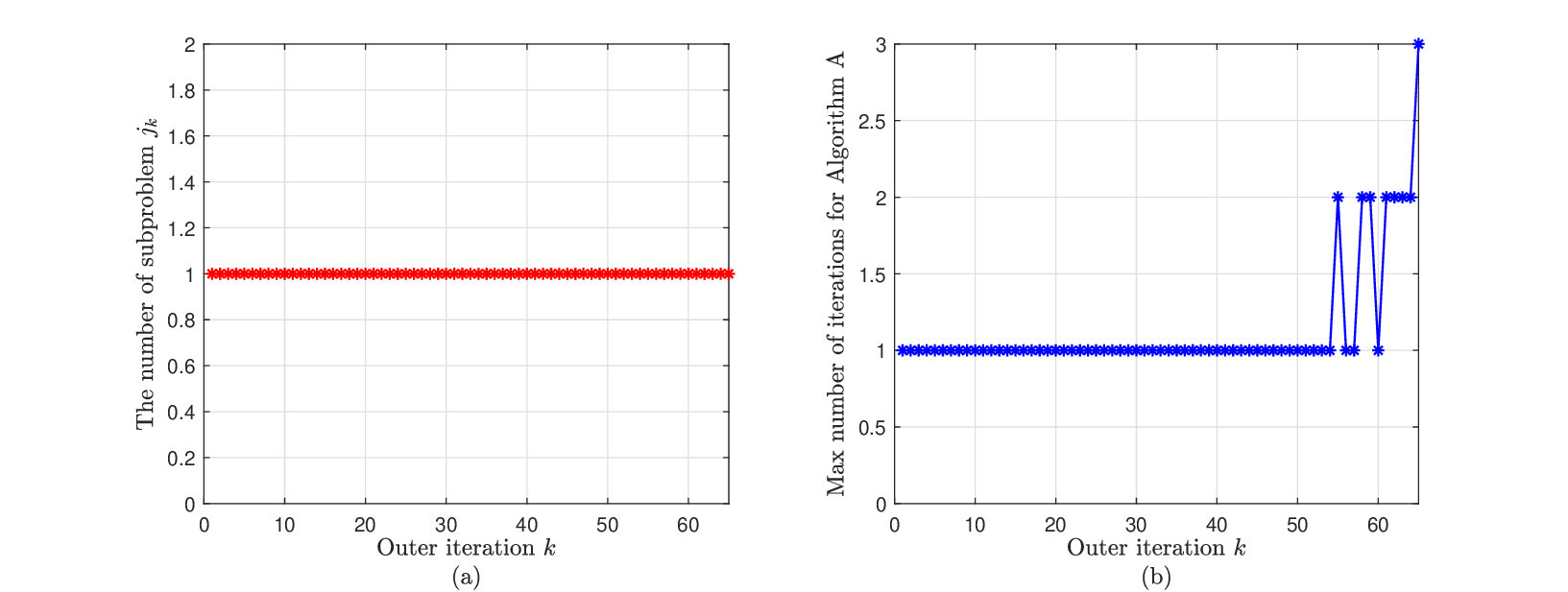}
\caption{The number of subproblems for each step of iLPA with $\mu_k=10^6 k^{-1}$ and the maximum number of iterations of dPPASN}
\label{figmuk2}
\end{figure}

%------------------------------------------------------------------------------------
\subsection{DC programs with nonsmooth components}\label{sec6.2}
%-------------------------------------------------------------------------------------
 We first apply the iLPA to solve the test examples in \cite{Artacho20}. These examples take the form of \eqref{prob} with $h\equiv 0$, and Appendix B includes their description in terms of \eqref{prob}. We see that, the functions $\vartheta_1$ and $\vartheta_2$ corresponding to these examples are piecewise linear convex functions which, by \cite[Proposition 1]{Robinson81} and \cite[Proposition 2.2 (i)]{LiuPanWY22}, are the KL function of exponent $1/2$; the mappings $F$ and $G$ associated with them are at least twice continuously differentiable, which implies that $F'$ and $G'$ are strictly continuous. After checking, the conjugate $\vartheta_1^*$ of $\vartheta_1$ is continuous relative to its domain. Thus, Assumption \ref{ass0} (i)-(iii) and Assumption \ref{ass2} hold for these examples, and the associated potential function $\Xi$ is a KL function. In addition, according to the original description in \cite{Artacho20} for these examples, it is not hard to check that the level set $\mathcal{L}_{\Phi}(x^0)$ is bounded. Then, the whole sequence $\{x^k\}_{k\in\mathbb{N}}$ is convergent by Theorem \ref{global-converge}, and has the R-linear convergence rate if the condition \eqref{key-cond} holds with $h\equiv 0$.

 For these examples, since their objective values have a small scale, we choose  $\mu_k=10/k$ for Algorithm \ref{iLPA}, which yields better objective values. The other parameters are chosen to be $\overline{\varrho}= 2,\,\overline{\gamma}=10^6$ and $\gamma_{k,0}\equiv\underline{\gamma}$. The parameter $\alpha_k$ involved in $\mathcal{Q}_{k,j}$ is chosen to be $\alpha_k\equiv\min\big\{10^{-4},10/\max\{1,\|F'(x^0)\|\}\big\}$. We  
 compare the performance of Algorithm \ref{iLPA} with that of the non-monotone boosted DC algorithm (nmBDCA) proposed in \cite{Ferreria21}. The nmBDCA is a non-monotone version of the boosted DC algorithm proposed in \cite{Artacho20} and able to deal with DC programs with nonsmooth components. The parameters of nmBDCA are set as the default ones in the code of nmBDCA. For a fair comparison, we terminate the iteration of iLPA and nmBDCA at $x^k$ whenever $\|x^k-x^{k-1}\|\le 10^{-8}$ or $k>10^3$.

 Table \ref{DCtab1} reports the average results of $\textbf{100}$ times running for each example. Among others, $\min\Phi$ and $\max\Phi$ denote the minimum and maximum objective values among $100$ running, and \textbf{Nopt} records the number of solutions whose objective values have the absolute difference to the optimal value less than $10^{-5}$. In each running, the two solvers start from the same random initial point generated by MATLAB command $x^0=20*\textrm{rand(n,1)}-10$. From Table \ref{DCtab1}, the iLPA returns more global optimal solutions than the nmBDCA except Example B.\ref{examA.3}. For Example B.\ref{examA.2}, the number of optimal solutions yielded by the iLPA is more twice than the one given by the nmBDCA. The average objective values yielded by the iLPA are better than those given by the nmBDCA except for Example B.\ref{examA.3}.
%-------------------------------------------------------------------------------
\setlength{\tabcolsep}{1mm}
\begin{table}[h]
\setlength{\belowcaptionskip}{-0.01cm}
 \caption{Numerical results of iLPA and nmBDCA for Examples B.1-B.6 in Appendix B}\label{DCtab1}
 \centering 
 \scriptsize
 %\scalebox{1}{
 \begin{tabular}{|c|c|c|c|c|l|c|c|c|c|c|}
 \hline
 \multicolumn{1}{|c|}{}&\multicolumn{5}{c|}{iLPA}&\multicolumn{5}{c|}{nmBDCA}\\
 \hline
  Example  &$\min\Phi$  &$\max\Phi$ &$ {\rm ave}\ \Phi^{\rm out}$ & Nopt &time(s)&$\min\Phi $&$\max\Phi$ &$ {\rm ave}\ \Phi^{\rm out}$ & Nopt&time(s)\\
			\hline
			B.1    &2.0000   & 2.0000&2.0000 &100 &0.025 & 2.0000     & 2.0000  &  2.0000  &100 &  0.015 \\
			B.2    &2.01e-11  &2.0000 &0.6400 & 68  & 0.013 & 3.36e-8 & 13.100 &  2.3130  & 28 &  0.015 \\
			B.3    &1.50e-11   &1.0000 &0.0800 & 92  &0.014 & 1.82e-09 & 1.0000  &  0.0200  &98  &  0.009 \\
			B.4    &0.5000     &0.5000 &0.5000 & 100  & 0.017& 0.5000     & 1.0000  &  0.7350  &53  &  0.032 \\
			B.5    &3.5000     &3.7500 &3.5275 & 89  &0.145 & 3.5000     & 3.9405  &  3.5594  &77  &  0.012 \\
			B.6    &-1.1250   &-1.1250 &-1.1250 &100 &0.044 &-1.1250     &-1.1250  & -1.1250   &100 &  0.016 \\			
   \Xhline{1.5pt}
   \end{tabular}%}
   \end{table}

 We also apply the above two solvers to the $\ell_1$-norm penalty problems of DC constrained problems for a fixed penalty parameter $\beta>0$. This class of problems takes the following form  
 \begin{equation}\label{L1-penalty}
 \min_{x\in\mathbb{R}^n} f(x)+\beta\sum_{i=1}^q\max\big\{0,c_i(x)-d_i(x)\big\}+\chi_{\Omega}(x),
 \end{equation}
 where $\Omega\subset\mathbb{R}^n$ is a simple closed convex set. Obviously, \eqref{L1-penalty} has the form \eqref{prob} with $\mathbb{X}=\mathbb{R}^n,\mathbb{Y}=\mathbb{R}^q$, $\mathbb{Z}=\mathbb{R}, \vartheta_1(y):=\beta{\textstyle\sum_{i=1}^q}\max(y_i,0), \vartheta_2(t):=t$, $h(x):=\delta_{\Omega}(x),F(x):=(c_1(x)-d_1(x),\ldots,c_q(x)-d_q(x))^{\top}$ and $G(x):=-f(x)$.
  The test examples include \textbf{mistake},  \textbf{hs108}, \textbf{hesse} and \textbf{Colville}; see \cite{Barkova21} for their description. The first two examples do not involve the hard constraint $x\in\Omega$, but we impose a soft box set containing their feasible set. Since nmBDCA is inapplicable to the extended real-valued convex functions, we apply it to the following equivalent form of \eqref{L1-penalty}:
  \begin{equation}\label{EL1-penalty}
  \min_{x\in\mathbb{R}^n} \underbrace{\beta\sum_{i=1}^{q'}\max\big\{0,c_i(x)-d_i(x)\big\}+\frac{\alpha}{2}\|x\|^2}_{f_0(x)}-\underbrace {(-f(x)+\frac{\alpha}{2}\|x\|^2)}_{f_1(x)},
  \end{equation}
 where $\alpha>0$ is a constant such that $f_0$ and $f_1$ are convex, and $q'\ge q$ is the number of constraints (including the hard constraints for the last two examples). For this group of examples, Algorithm \ref{iLPA} uses the same parameters as above except that $\gamma_{k,0}=\min\{\|F'(x^0)\|,100\}$. We terminate the two solvers either $\|x^k\!-x^{k-1}\|\le 10^{-8}\& \textbf{infea}^k\!=\sum_{i=1}^{q'}\max\big\{0,c_i(x^k)-d_i(x^k)\big\}\le 10^{-6}$ or $k>10^3$. 

 Table \ref{DCtab2} reports the average results of $\textbf{100}$ times running for each example, where \textbf{Nopt} records the number of solutions whose objective values have the absolute difference to the best known value less than $10^{-5}$. In each running, the two solvers start from the same initial point generated by MATLAB command $x^0=20*\textrm{rand(n,1)}-10$. For nmBDCA, the parameter $\alpha$ in \eqref{EL1-penalty} is chosen to be $0.1$. Such $\alpha$ does not necessarily guarantee the convexity of the corresponding $f_0$ and $f_1$, but it leads to a better result. From Table \ref{DCtab2}, iLPA returns more best known solutions than nmBDCA for ``mistake'', ``hesse'' and ``ore'', and it returns better average objective value for ``hs108'' though does not produce a best known solution. The feasibility violation yielded by iLPA is a little worse than the one given by nmBDCA.

 \setlength{\tabcolsep}{1mm}
 \begin{table}[h]
 \setlength{\belowcaptionskip}{-0.01cm}
 \scriptsize
 \centering
 \caption{Numerical results of iLPA and nmBDCA for DC constrained test examples}\label{DCtab2}
 % \scalebox{0.9}{
 \begin{tabular}{|c|c|c|c|c|c|l|c|c|c|c|c|}
 \hline
  \multicolumn{2}{|c|}{}&\multicolumn{5}{c|}{iLPA}&\multicolumn{5}{c|}{nmBDCA}\\
 			\hline
 			Problem &$\beta$  & max & ave & Nopt & Infea&time(s)  & max & ave & Nopt & Infea & time(s)\\
 			\hline
 			mistake &$10$     &-0.5063   & -1.0000   & 60 &2.656e-4 &0.167  & -0.4593   & -0.9102  & 1  & 3.936e-9 &  0.078 \\
 			hs108   &$10$   &-0.4996  &-0.7660  & 0 & 3.999e-5 &0.227 & -0.3810  & -0.7466  & 2  & 2.129e-6 &  0.094 \\
 			hesse   &\ $10^4$ &-36.0000    &-197.58  &2  & 3.144e-10 & 0.010 &-21.971  & -188.003 & 0  & 5.052e-9 &  0.027 \\
 			ore     &\ $10^2$ & -0.9198    &-1.0719    &46 &6.378e-8 &0.182  & -0.9167  &-1.0808    &0  & 2.228e-11 &  0.042 \\
 			\Xhline{1.5pt}
 	\end{tabular}%}
 \end{table} 
%-------------------------------------------------------------------------------------
\subsection{Matrix completion with outliers under non-uniform sampling}\label{sec6.3}

 We apply the iLPA to solve the problem \eqref{SCAD-loss} with $\vartheta$ being the SCAD function. From the discussion in  Example \ref{exam3}, it has the form \eqref{prob} with $F(x)=\mathcal{A}(UV^{\top})-b=G(x),h(x)=\lambda(\|U\|_{2,1}+\|V\|_{2,1})$ for $x=(U,V)\in\mathbb{X}=\mathbb{R}^{n_1\times r}\times\mathbb{R}^{n_2\times r}$ and $\vartheta_1(y)=\|y\|_1$, $\vartheta_2(y)=\frac{1}{\rho}\sum_{i=1}^m\theta_a(\rho|y_i|)$ for $y\in\mathbb{R}^m$. Obviously, Assumptions \ref{ass0}-\ref{ass2} hold for this model, and the corresponding potential function $\Xi$ is semialgebraic. Notice that $\vartheta_2$ is a PLQ definable function in an
o-minimal structure over the real field, so is the function $\mathbb{Z}\times\mathbb{Z}\ni(u,z)\mapsto \langle u,z\rangle+\vartheta_2^*(-z)$. Then, it is a KL function of exponent $1/2$ by virtue of \cite[Proposition 1]{Robinson81} and \cite[Proposition 2.2 (i)]{LiuPanWY22}, and so is the function $\vartheta_1$. In addition, the mapping $\partial h$ is metrically subregular by virtue of \cite{ZhouSo17}, which by \cite[Proposition 2.2 (i) \& Remark 2.2]{LiuPanWY22} means that $h$ is also a KL function of exponent $1/2$. Thus, the associated function $f$ in \eqref{ffun} is a KL function of exponent $1/2$. Consequently, the sequence $\{x^k\}_{k\in\mathbb{N}}$ yielded by Algorithm \ref{iLPA} is convergent, and has a R-linear convergence rate if the condition \eqref{key-cond} is satisfied. For such $F$ and $G$, the mapping $\mathcal{A}_k$ and the matrix $u^k$ in \eqref{Akmap} are specified as 
 \[
  \mathcal{A}_k(G,H):=\mathcal{A}(U^kH^{\top}+G(V^k)^{\top})\ \ {\rm for}\ \ (G,H)\in\mathbb{X}\ \ {\rm and}\ \ 
  u^k=((\mathcal{A}^*\xi^k)V^k,(\mathcal{A}^*\xi^k)^{\top}U^k).
 \]

 To formulate the sampling operator $\mathcal{A}$, a random index set $\Omega=\big\{(i_t,j_t)\ |\ t=1,\ldots,m\big\}$ is assumed to be available, and the samples of the indices are drawn independently from a general distribution $\Pi\!=\{\pi_{kl}\}_{k\in[n_1],l\in[n_2]}$ on $[n_1]\times[n_2]$. We adopt the non-uniform sampling scheme used in \cite{Fang18}, i.e., 
\begin{equation}\label{sampling-scheme}
 \pi_{kl}=p_kp_l\ \ {\rm for\ each}\ (k,l)\ \ {\rm with}\ \ p_k=\left\{\begin{array}{ll}
      2p_0& {\rm if}\ k\le\frac{n_1}{10}, \\
      4p_0& {\rm if}\ \frac{n_1}{10}\le k\le \frac{n_1}{5},\\
      p_0& {\rm otherwise},\\
     \end{array}\right.
\end{equation}
where $p_0>0$ is a constant such that $\sum_{k=1}^{n_1}p_k=1$ or $\sum_{l=1}^{n_2}p_l=1$. Then, the mapping $\mathcal{A}$ is defined by
$\mathcal{A}(X):=(X_{i_1,j_1},\,X_{i_2,j_2},\ldots,X_{i_m,j_m})^{\top}$ for $X\in\mathbb{R}^{n_1\times n_2}$, and $b=\mathcal{A}(M_{\Omega})$ where $M_{\Omega}$ is an $n_1\times n_2$ matrix with \begin{equation}\label{observe}
 [M_{\Omega}]_{i_t,j_t}=\left\{\begin{array}{cl}
  0 & {\rm if}\ (i_t,j_t)\notin\Omega,\\
 M_{i_t,j_t}^*+\varpi_t &{\rm if}\ (i_t,j_t)\in\Omega
 \end{array}\right.\ {\rm for}\ \ t=1,2,\ldots,m.
 \end{equation}
 Here, $M^*$ is the true matrix of rank $r^*$ for synthetic data, and for real data it is a matrix drawn from the original incomplete data matrix, and $\varpi=(\varpi_1,\ldots,\varpi_m)^{\top}$ is a sparse noisy vector. The nonzero entries of $\varpi$ obey one of the following distributions: {\bf(I)} $N(0,10^2)$; {\bf(II)} Student's t-distribution with $4$ degrees of freedom scaled by $\sqrt{2}$; {\bf(III)} Cauchy distribution with density $d(u)=\frac{1}{\pi(1+u^2)}$; {\bf(IV)} mixture normal distribution $N(0,\sigma^2)$ with $\sigma\sim {\rm Unif}(1,5)$; {\bf(V)} Laplace distribution with density $d(u)=0.5\exp(-|u|)$. 
 
For synthetic data, we evaluate the effect of matrix recovery in terms of the relative error (RE), defined by $\frac{\|X^{\rm out}-M^*\|_F}{\|M^*\|_F}$, where $X^{\rm out}=U^{\rm out}(V^{\rm out})^{\top}$ represents the output of a solver. For real data, we adopt the normalized mean absolute error (NMAE) to measure the accuracy; see Section \ref{sec6.3.3} for its definition. In addition, we also record the sparsity ratio (SPR) of the DC loss term at the output, i.e., the percentage of the number of zero components of $\mathcal{A}(X^{\rm out})-b$ in the number of sampling, where the number of zero components of a vector $z\in\mathbb{R}^m$ is calculated by $|\{i\in\{1,\ldots,m\}\ |\ |z_i|\le 10^{-4}\|z\|_{\infty}\}|$. 
%-------------------------------------------------------------------------------
\subsubsection{Choice of parameters and stop condition }\label{sec6.3.1}

 We first focus on the choice of parameters in Algorithm \ref{iLPA}. As suggested in Section \ref{sec6.1}, we choose $\mu_k\equiv 10^6/k$. The others are the same as in Section \ref{sec6.2} except that $\gamma_{k,0}\equiv\max\{10,\lfloor10^{-2}\min\{n_1,n_2\}\rfloor\}$. The parameter $\alpha_k$ in $\mathcal{Q}_{k,j}=\gamma_{k,j}\mathcal{I}+\alpha_k\mathcal{A}_k^*\mathcal{A}_k$ is chosen by the following rule with $\alpha_0=1.0$:
 \begin{equation}\label{alpk-update}
  \alpha_{k}=\left\{\begin{array}{cl}
  \max\{\alpha_{k-1}/1.2,10^{-3}\}&{\rm if}\ {\rm mod}(k,3)=0,\\
  \alpha_{k-1} &{\rm otherwise}.
  \end{array}\right.
 \end{equation} 
 By Remark \ref{remark-alg} (e), we terminate Algorithm \ref{iLPA} at $x^k=(U^k,V^k)$ when one of the three conditions hold  
\begin{equation}\label{stop-cond}
 \frac{\|x^k-x^{k-1}\|_F}{1\!+\|b\|}\le\varepsilon_1,\ \frac{\max_{j\in\{1,\ldots,9\}}|\Phi(x^k)-\Phi(x^{k-j})|}{\max\{1,\Phi(x^k)\}}\le \varepsilon_2\ \ {\rm for}\ k\ge 10\ \ {\rm and}\ k>k_{\rm max}.
\end{equation}
Unless otherwise stated, $\varepsilon_1=10^{-5},\varepsilon_2=5\times 10^{-4}$ and $k_{\rm max}=500$ are used for the subsequent tests.

Next we take a look at the choice of parameters in model \eqref{SCAD-loss}. As the term $\lambda(\|U\|_{2,1}\!+\|V\|_{2,1})$ is used to reduce rank by promoting column sparsity, we choose \[r=\min(100,\lfloor\frac{1}{2}\min(n_1,n_2)\rfloor)\ \ {\rm and} \ \ \lambda=c_{\lambda}\|b\|,\]
where $c_\lambda>0$ is specified in the experiments. For the constant $a$ in $\vartheta_2$, we always choose $a=4$, which is close to $3.7$ suggested in \cite{Fan01}. The parameter $\rho$ in $\vartheta_2$ has influence on the relative error and the sparsity of the vector $\mathcal{A}(X^{\rm out})-b$. As shown by Figure \ref{fig0} below, the relative error has tiny variation when $\rho\in[0.008,0.5]$, and it becomes worse as $\rho$ increases in $(0.6,1.2]$; the sparsity ratio is desirable when $\rho\in[0.008,0.7]$, but as $\rho$ increases in $(0.7,1.2]$ it decreases rapidly and is close to zero. This means that the concave composition term $-\vartheta_2(\mathcal{A}(UV^{\top})-b)$ plays an active role when $\rho\in[0.008,0.7]$. After making trade-off between the relative error and the sparsity of $\mathcal{A}(X^{\rm out})-b$, we always choose $\rho=10^{-2}$ for the subsequent experiments.
%---------------------------------------------------------------
\begin{figure}[h]
\centering
\includegraphics[width=\textwidth]{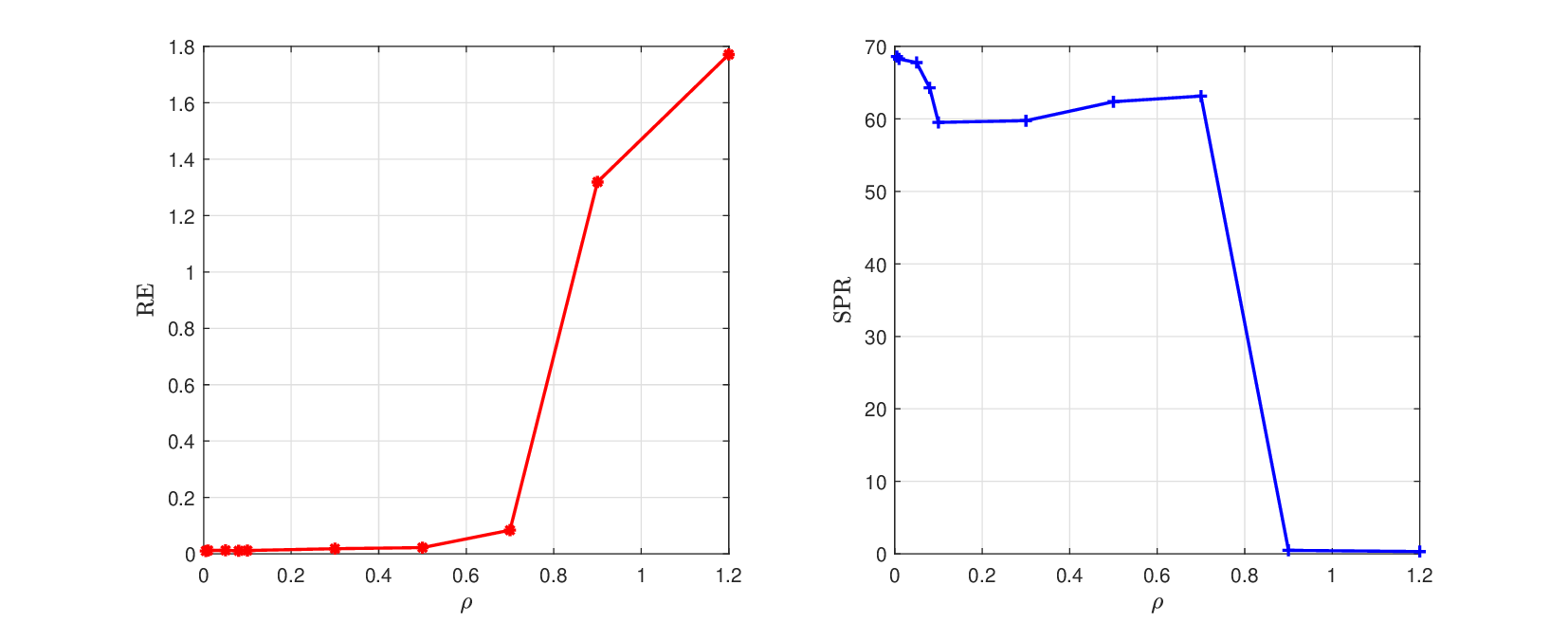}
 \caption{The relative error and sparsity ratio curves of iLPA under noise of type IV with $n_1=n_2=1000,r^*=10,SR=0.15$}
  \label{fig0}
\end{figure}

 We compare the performance of the iLPA with that of the Polyak subgradient method (subGM for short) and that of the PAM method in Appendix C. Considering that $\min_{x\in\mathbb{X}}\Phi(x)$ in step \ref{step4} of Algorithm \ref{subGrad} is unavailable in practice, we replace the step-size $\frac{\Phi(x^k)-\min_{x\in\mathbb{X}}\Phi(x)}{\|\zeta^k\|^2}$ with $\frac{0.05|\Phi(x^k)|}{\|\zeta^k\|^2}$. Algorithm \ref{subGrad} with such an approximate one does not admit the subsequential convergence, and we use it just for numerical comparison. In addition, as mentioned in Remark \ref{remark-BCD} of Appendix C, when Algorithm \ref{PAM} is applied to solve the problem \eqref{SCAD-loss}, its iterate sequence lacks a full convergence certificate, and we use it just for numerical comparison. For the parameters $\alpha_{i,k}$ and $\gamma_{i,k}$ involved in the PAM, we update $\alpha_{i,k}$ for $i=1,2$ by the same rule as for $\alpha_k$ in \eqref{alpk-update} with $\alpha_{1,0}=\alpha_{2,0}=10$, and update $\gamma_{i,k}$ for $i=1,2$ by the rule
\[
   \gamma_{i,k}=\left\{\begin{array}{cl}
      \max\{\gamma_{i,k-1}/1.2,10^{-3}\}&{\rm if}\ \textrm{mod}(k,3)=0,\\
      \gamma_{i,k-1} &{\rm otherwise}.      
   \end{array}\right.
\] 
The accuracy $\epsilon_k$ for solving subproblems is updated via $\epsilon_k=\max\{10^{-6},0.95\epsilon_{k-1}\}$ with $\epsilon_0=10$. For fair comparison, the SubGM and PAM use the same starting point $x^0$ and stop condition as for iLPA.
%--------------------------------------------------------------------------------------
\subsubsection{Numerical results for synthetic data}\label{sec6.3.2}

 We generate randomly the true matrix $M^*=M_{L}^*(M_{R}^*)^{\top}\!\in\mathbb{R}^{n_1\times n_2}$ by sampling the entries of $M_{L}^*\in\mathbb{R}^{n_1\times r^*}$ and $M_{R}^*\in\mathbb{R}^{n_2\times r^*}$ independently from the standard normal distribution $N(0,1)$. The number of nonzero entries of the noise vector $\varpi$ is set to be $\lfloor0.3m\rfloor$. We choose $x^0=(U_{1}\Sigma_{r}^{1/2},V_{1}\Sigma_{r}^{1/2})$ as the starting point of iLPA and PAM, where $U_{1}\in\mathbb{R}^{n_1\times r}$ and $V_1\in\mathbb{R}^{n_2\times r}$ are the matrix consisting of the first $r$ largest left and right singular vectors of $M_{\Omega}$, respectively, and $\Sigma_{r}$ is the diagonal matrix consisting of the first $r$ largest singular values of $M_{\Omega}$ arranged in an nonincreasing order. 
 
 Before testing the performance of the three solvers on synthetic data, we take a look at their iteration behaviors and how the relative errors yielded by them vary with $\lambda$. Figure \ref{fig1} shows the relative errors of the successive iterations of the three solvers. We see that the relative errors by the iLPA approach to zero faster than those by the PAM and the subGM, which means that the iterate sequence of iLPA has better global convergence than that of PAM and subGM. Figure \ref{fig2} below plots the relative error and rank curves of the three solvers as the parameter $\lambda$ varies, by using the average results for running $5$ examples generated randomly with noise of type V, $n_1=n_2=1000,r^*=5$ and $\textrm{SR}=0.25$. We see that, there exists an interval of $\lambda$ such that the stationary points yielded by them with such $\lambda$ have the satisfactory relative errors and the true rank $r^*$. Such an interval of iLPA is remarkably larger than the other two solvers, which means that the iLPA has better robustness with respect to $\lambda$.
%-----------------------------------------------------------------------------
 %\vspace{-0.3cm}
 \begin{figure}[h]
 \centering
\includegraphics[width=\textwidth]{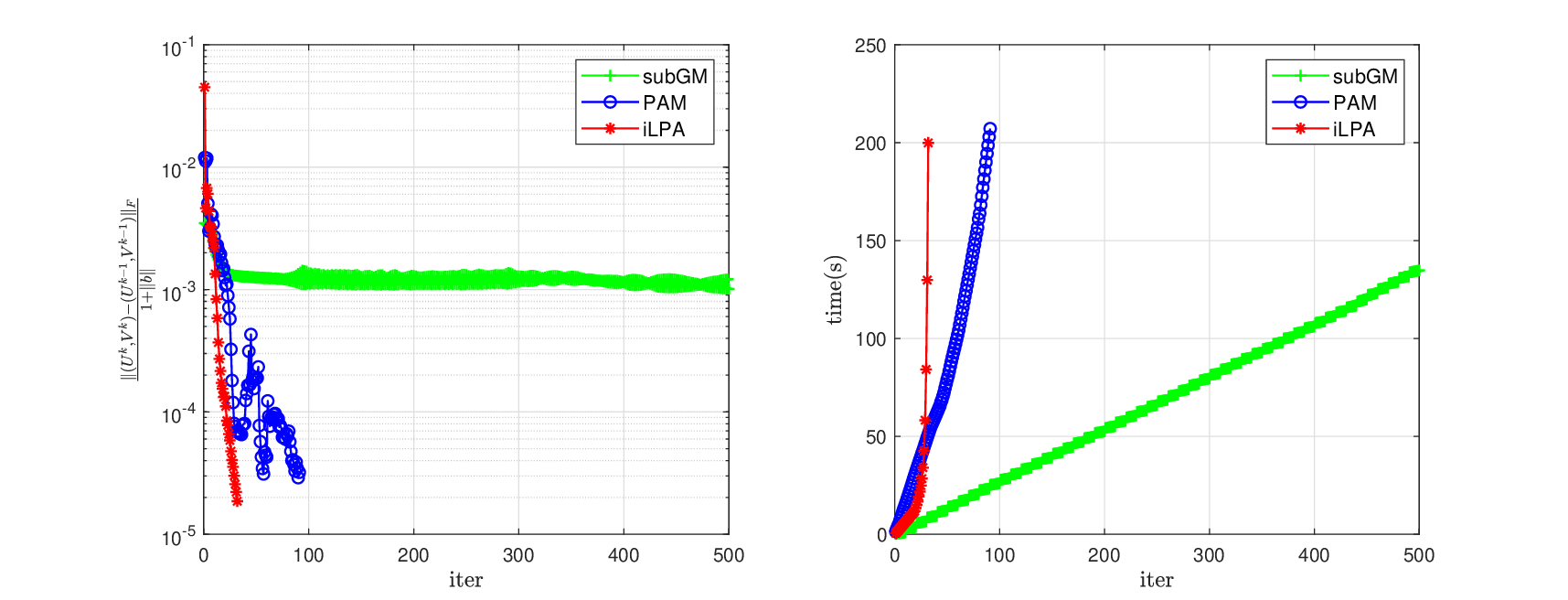}
 \caption{Iteration and time curves of iLPA, PAM and subGM under noise of type IV with $n_1=n_2=3000, r^*=10$ and $SR=0.15$}
  \label{fig1}
\end{figure}
 %----------------------------------------------------------------------
\begin{figure}[h]
 \centering
\includegraphics[width=\textwidth]{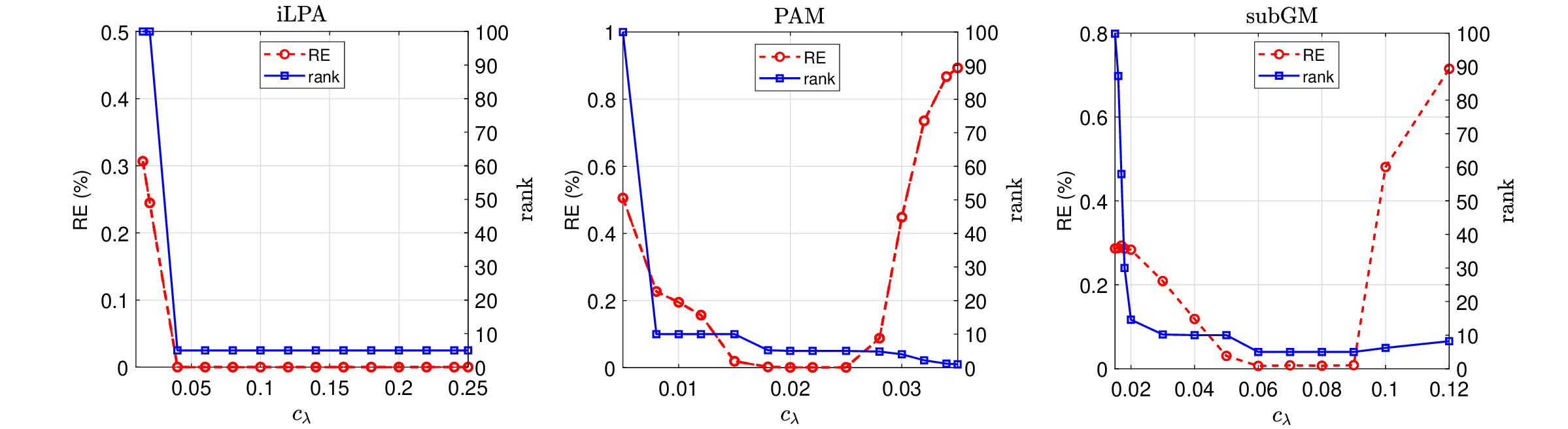}
 \caption{The relative error and rank curves of iLPA, PAM and subGM as the parameter $c_{\lambda}$ or $\lambda$ increases}
  \label{fig2}
\end{figure}

 Now we are in a position to show the recovery effect and the running time (in seconds) of the three solvers under different $n_1\!=n_2,\,r^*$ and $\textrm{SR}$. Table \ref{tabSubG} reports the average results for running $5$ examples generated randomly in each setting, where the results with lower REs and higher SPR are marked in black. Consider that the interval of $\lambda$ for the PAM and the subGM to have the satisfactory relative errors is smaller, so in Table \ref{tabSubG} we choose the value of $c_{\lambda}$ by referring to Figure \ref{fig2}. From Table \ref{tabSubG}, the iLPA is superior to the other two solvers in terms of relative error and SPR for most examples, and its running time is much less than that of the PAM for $n_1=6000$ and is comparable with that of the latter for $n_1=1000$. Observe that under noise of type III with $n_1=6000$ and ${\rm SR}=0.15$, the running time of the PAM is $8572$ seconds since the PPASN attains the maximum number of iterations $100$ for one of the examples. The subGM needs the least running time but yields the worst relative error and SPR. In addition, the ranks returned by the iLPA and the PAM all coincide with the true one, but the ranks returned by the subGM are generally higher than the true one.
%----------------------------------------------------------------------------------------
\setlength{\tabcolsep}{1mm}
\setlength{\abovecaptionskip}{-0.1cm}
\begin{sidewaystable}[h]
	\setlength{\belowcaptionskip}{-0.01cm}
	\setlength\tabcolsep{0.78pt}
	\renewcommand\arraystretch{1.3}
	\centering
	\tiny
	\caption{\small Average RE and running time of three solvers for test examples generated randomly}\label{tabSubG}
	%	\begin{tabular}{cc|lccc|lccc||lccc|lccc}
		\begin{tabular*}{\textwidth}{@{\extracolsep{\fill}}cc|lcccc|lcccc|lcccc||lcccc|lcccc|lcccc@{\extracolsep{\fill}}}	
			\hline
			& & \multicolumn{5}{l}{\  \ iLPA($n_1=1000$,$r^*=10$)}&\multicolumn{5}{l}{ \ \ PAM ($n_1=1000$,$r^*=10$)}&\multicolumn{5}{l||}{\  \ SubGM ($n_1=1000$,$r^*=10$)}& \multicolumn{5}{l}{\  \ iLPA ($n_1=6000$,$r^*=15$)}&
			\multicolumn{5}{l}{\ PAM ($n_1=6000$,$r^*=15$)}&\multicolumn{5}{l}{\  SubGM ($n_1=6000$,$r^*=15$)}\\
			\hline
			%		\cmidrule(lr){3-6} \cmidrule(lr){7-10}\cmidrule(lr){11-14} \cmidrule(lr){15-18}
			
			$\varpi$ & SR&$c_{\lambda}$& RE & rank  &SPR&time &$c_{\lambda}$& RE & rank  &SPR&time &$ c_{\lambda}$&RE & rank &SPR& time&$c_{\lambda}$& RE & rank  &SPR&time&$c_{\lambda}$&  RE & rank &SPR& time&$c_{\lambda}$&  RE & rank &SPR& time\\
			\hline			
	&0.15 &0.06& 3.54e-2 & 10 & 65.6 &6.29&0.06& {\bf 3.49e-2}&10 &52.5& 11.0&  0.06  &9.96e-2 &11&1.37  &5.66 
    & 0.08&{\bf 4.74e-5}& 15&{\bf 70.0}& 191.9&0.08&1.85e-3&15 &53.8& 793.0  &0.15&7.02e-2&16&2.84&249.3\\
I	&0.25 &0.06& {\bf 1.67e-4} & 10 & {\bf 69.9} &12.2&0.06&3.03e-3&10 &48.7& 9.95& 0.06 &4.51e-2 &11&2.20&6.90 
    & 0.08& {\bf 6.62e-5}& 15&{\bf 70.0}&194.4&0.08&1.68e-3&15 &54.8& 998.3 & 0.15&3.42e-2&15  & 4.02 &  296.0 \\
   \hline
   
    &0.15 &0.07& 5.80e-3& 10 & 69.8 & 17.4 &0.07& 5.53e-3& 10 &67.4&31.2&  0.18  &1.38e-1 &11&2.28  &5.11& 
     0.08& {\bf 3.11e-5} & 15 & 70.2&527.9&0.08& 6.76e-4& 15 & 69.8& 1122&0.40& 1.52e-1 &16& 5.42 & 260.4\\
II	&0.25 &0.07& {\bf 8.78e-5}& 10 & {\bf 70.1}& 19.2 &0.07&2.88e-3&10&43.1&10.7 &0.18  &2.33e-2 &10&6.73 &6.63 
    &0.08& {\bf 2.28e-5} & 15 & {\bf70.2}& 557.9&0.08&5.01e-4& 15 &70.1& 1199&0.40& 3.15e-2 &16&11.9& 345.4\\
			\hline
    &0.15 &0.025& {\bf3.95e-2}& 10 & {\bf 95.1}&46.1&0.025 & 4.14e-2&10& 95.1&82.5& 0.05 &  4.20e-1  &11  &82.5 & 5.19&
    0.08&{\bf 2.45e-4} &15&{\bf 99.8}&642.8 &0.08&7.05e-4 &15&99.8& 8572&0.40& 2.03e-1 &16&92.9& 245.2\\
III	&0.25 &0.025&{\bf5.16e-4}& 10 & {\bf 96.7}&55.9&0.025 & 1.83e-3&10&96.7& 84.1&  0.05  & 4.71e-1 & 12  & 96.2&6.56&
    0.08& {\bf 1.39e-4} &15&{\bf 99.8}& 816.1&0.08&5.40e-4 &15&99.8&1630&0.40& 2.13e-2 &15&99.8& 304.2\\
			
			\hline
	&0.15 &0.07&  8.37e-3& 10 & 69.1 & 14.1&0.07& 8.25e-3 & 10 &47.4& 24.7& 0.15&1.12e-1 & 13& 0.94& 5.91  &       
 0.08& {\bf 3.11e-5} & 15 & {\bf 70.0}& 420.2 &0.07& 7.60e-4 &15& 50.6 & 907.5&0.40& 1.90e-1 &17& 0.97 & 243.4\\
IV	&0.25 &0.07& {\bf 1.00e-4}& 10 &{\bf 69.9}& 18.6&0.07 &3.13e-3 & 10 &26.5& 12.7& 0.15&2.46e-2 &10&3.25  &6.65&     
  0.08& {\bf 1.72e-5} & 15 &{\bf 70.0}& 607.1&0.07& 8.83e-4 &15& 48.1 & 721.0&0.40& 4.26e-2 &16& 2.11 & 310.5\\
  \hline
			
	&0.15 &0.07& {\bf 2.47e-3}& 10 &{\bf 69.3} & 31.1&0.07 & 2.82e-3 & 10 & 48.2&42.3 &0.20&1.47e-1&11 &0.81&   5.44 &
    0.08& {\bf 4.84e-5} & 15 & {\bf 70.0}& 481.9 &0.08& 5.30e-4 &15& 50.2 & 1473&0.40& 1.30e-1 &16& 0.83 & 246.6\\
V   &0.25 &0.07& {\bf7.46e-5}& 10 & {\bf69.9 }&26.1&0.07 &  3.40e-3 & 10 & 15.1&9.18& 0.20 &2.35e-2&10&2.44&  6.47&  
   0.08& {\bf 3.68e-5} & 15 &{\bf 70.0}& 510.1&0.08& 1.09e-3 &15& 34.0 & 899.0&0.40& 2.65e-2 &15& 2.13 & 314.5\\
			\hline
	\end{tabular*}
\end{sidewaystable}

\subsubsection{Numerical results for real data}\label{sec6.3.3}
 
 We test the performance of iLPA, PAM and subGM on matrix completion with real data sets, including the jester joke, movieLens and netflix datasets. For each dataset, let $M^0$ denote the original incomplete data matrix such that the $i$th row of $M^0$ corresponds to the ratings given by the $i$th user. Since many entries are unknown, we cannot compute the relative error as we did for the simulated data. Instead, we take the metric of the normalized mean absolute error (NMAE) to measure the accuracy:
 \[
 {\rm NMAE}=\frac{\sum_{(i,j)\in\Gamma\backslash\Omega}|X^{\rm out}_{i,j}-M_{i,j}|}
 {|\Gamma\backslash\Omega|(r_{\rm max}-r_{\rm min})}\ \ {\rm with}\ \ X^{\rm out}=U^{\rm out}(V^{\rm out})^{\top},
 \]
 where $\Gamma:=\{(i,j)\in[n_{1}]\times[n_2]\ |\ M_{ij}\ \textrm{is given}\}$ denotes the set of indices for which $M_{ij}$ is given, and $r_{\rm min}$ and $r_{\rm max}$ denote the lower and upper bounds of the ratings, respectively. 

 Before testing the performance of the three solvers, we utilize the netflix dataset from \url{https://www.kaggle.com/netflix-inc/netflix-prize-data\#qualifying.txt} to examine their iteration behaviors. We first randomly select $n_1=3000$ users and their $n_2=3000$ column ratings from $M^0$, sample the observed entries with the sampling scheme \eqref{sampling-scheme}, and then obtain $M_{\Omega}$ via \eqref{observe} with $M^*=M^0$. Figure \ref{fig3} shows the relative errors of the successive iterations of the three solvers. We see that their iteration behaviors are similar to those on synthetic data in Figure \ref{fig2}, but the running time of the PAM increases more quickly as the number of iterations increases. To ensure that the PAM can be used to test real data of large scale, we relax its stopping condition by replacing $\varepsilon_1=10^{-5}$ with $\varepsilon_1=10^{-4}$, $\varepsilon_2=5\times 10^{-4}$ with $\varepsilon_2=10^{-3}$, and $k_{\rm max}=100$ with $k_{\rm max}=40$ for the subsequent tests on real datasets. 
%------------------------------------------------------------------ 
 \begin{figure}[h]
 \centering
\includegraphics[width=\textwidth]{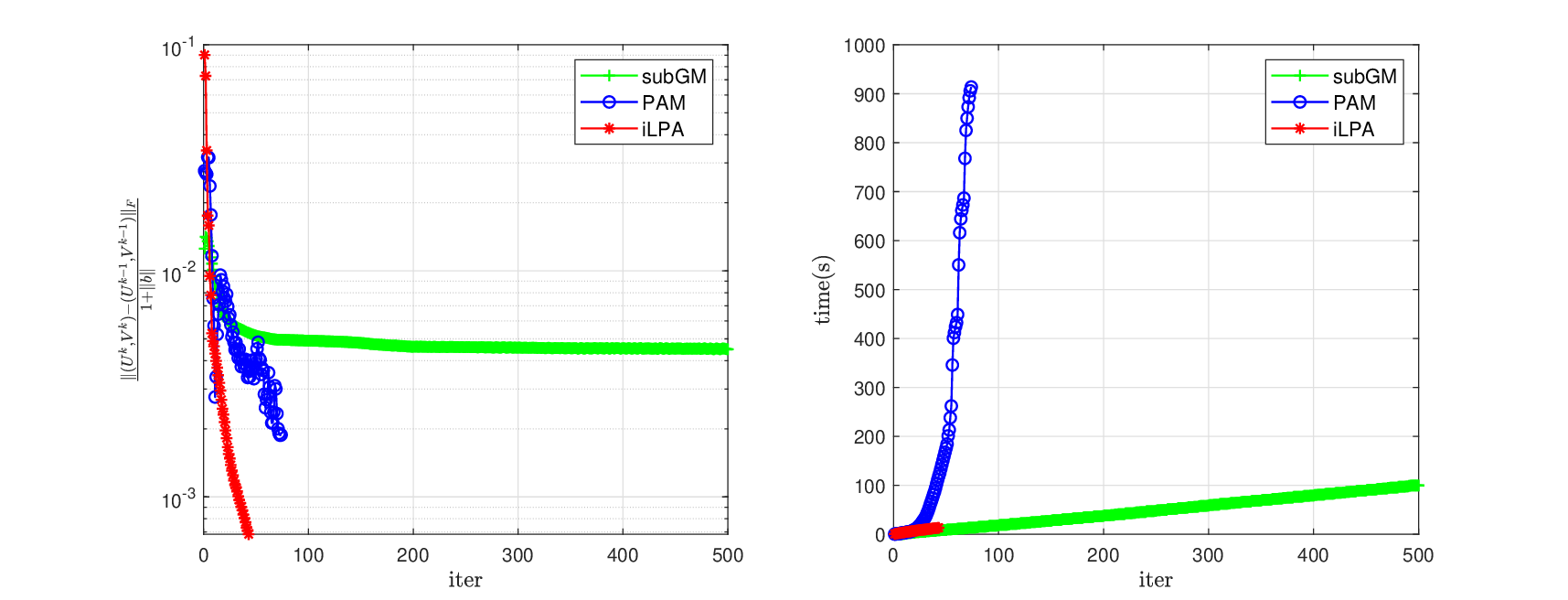}
 \caption{Iteration and time curves of iLPA, PAM and subGM under noise of type IV for the netflix dataset with $n_1=n_2=3000$}
  \label{fig3}
\end{figure}
 
We also check how the NMAEs yielded by the three solvers vary with $\lambda$ by using the movie-100K dataset, which is contained in the movieLens dataset from \url{http://www.grouplens.org/node/73}. Just like \cite{Toh10}, we consider the data matrix $\widetilde{M}^0\!=M^0-3$, and obtain $M_{\Omega}$ via \eqref{observe} with $M^*\!=\widetilde{M}^0$. From Figure \ref{fig4} below, the NMAE yielded by the iLPA with $c_{\lambda}\in[0.2,0.8]$ has a comparable variation with the NMAE yielded by the PAM with $c_{\lambda}\in[0.15,0.38]$ and the NMAE yielded by the subGM with $c_{\lambda}\in[0.15,0.5]$, though the best and worst NMAEs by the former with $c_{\lambda}\in[0.2,0.8]$ are a little higher than those by the PAM with $c_{\lambda}\in[0.15,0.38]$ and those by the subGM with $c_{\lambda}\in[0.15,0.5]$. Unlike Figure \ref{fig2} for synthetic data, the ranks yielded by the three solvers decrease quickly to $1$, and then keep unchanged for a certain range of $\lambda$. Clearly, such an interval of $\lambda$ for the iLPA is larger than the one for the other two solvers. This means that for real data the iLPA still has better robustness with respect to $\lambda$ than the PAM and the subGM. Since the intervals of $\lambda$ for the three solvers to return better NMAEs may be disjoint, for the fairness of comparison, in the subsequent testing, we will choose different $c_{\lambda}$ for the three solvers.
  
\begin{figure}[h]
 \centering
\includegraphics[width=\textwidth]{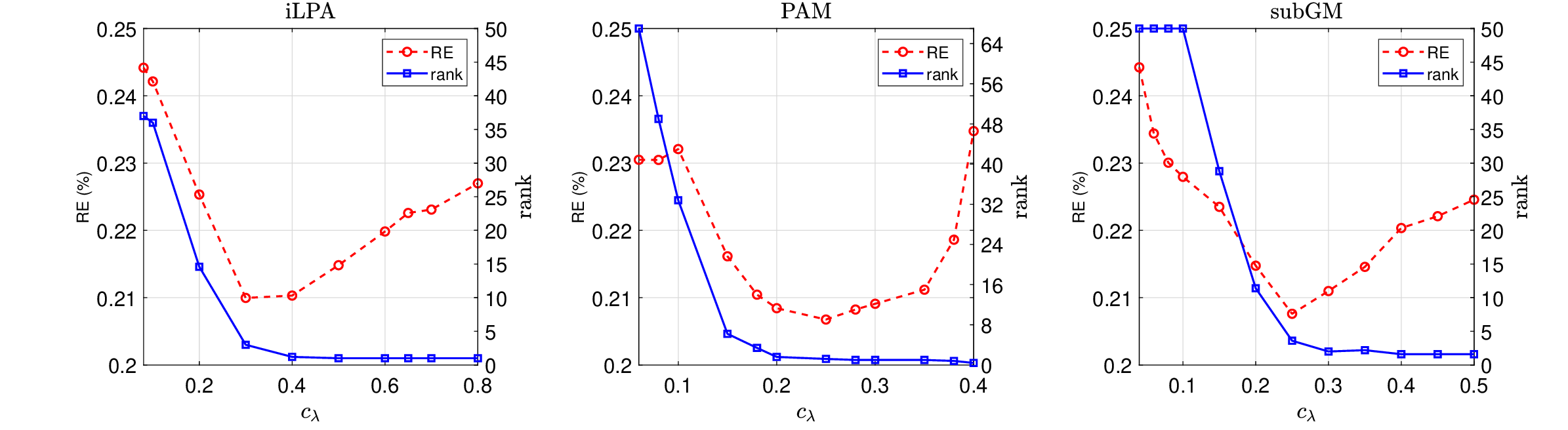}
 \caption{The relative error and rank curves of iLPA, PAM and subGM as $c_{\lambda}$ or $\lambda$ increases for the movie-100K dataset}
  \label{fig4}
\end{figure}
 
We start with testing the netflix dataset. Due to the large number of users, we randomly select $n_1$ users with $n_1=6000$ (resp. $10000$) and their $n_2=n_1$ column ratings from $M^0$, sample the observed entries with the sampling scheme in \eqref{sampling-scheme}, and then obtain $M_{\Omega}$ via \eqref{observe} with $M^*=M^0$. Preliminary tests indicate that as $c_{\lambda}$ or $\lambda$ increases, the three solvers have similar performance as they do in Figure \ref{fig4}, so we report their results for solving \eqref{SCAD-loss} with their respective $c_{\lambda}$ such that the target rank is $1$. Table \ref{tabNetflix} reports the average NMAE, rank, SPR and running time (in seconds) obtained by running $5$ times for each setting. We see that the three solvers yield the comparable NMAEs though the NMAEs by the PAM are the best and the NMAEs by the subGM are the worst; the running time of the iLPA is comparable with that of the subGM, but the running time of the PAM is the most. 

%-------------------------------------------------------------------------------------------
%\setcounter{table}{2}
\setlength{\tabcolsep}{1mm}
\begin{sidewaystable}[h]
	\setlength{\belowcaptionskip}{-0.01cm}
	\setlength\tabcolsep{1.0pt}
	\renewcommand\arraystretch{1.3}
	\centering
	\tiny
	\caption{\small Average NMAE and running time of iLPA, PAM and subGM for netflix dataset}\label{tabNetflix}
	
		%\begin{tabular}{cc|lccc|lccc||lccc|lccc}
  \begin{tabular*}{\textwidth}{@{\extracolsep{\fill}}cc|lcccc|lcccc|lcccc||lcccc|lcccc|lcccc@{\extracolsep{\fill}}}
			\hline
			& & \multicolumn{5}{l}{\  \  iLPA($n_{1}=6000$)}&\multicolumn{5}{l}{\  \ \ PAM($n_1=6000$)}&\multicolumn{5}{l||}{\  \ subGM($n_1=6000$)}& \multicolumn{5}{l}{\  \ \ iLPA($n_{1}=10000$)}&
			\multicolumn{5}{l}{\  \ \ PAM($n_{1}\!\!=\!10000$)}&\multicolumn{5}{l}{\  \ \ subGM($n_{1}\!\!=\!10000$)}\\
			%\cmidrule(lr){3-6} \cmidrule(lr){7-10}\cmidrule(lr){11-14} \cmidrule(lr){15-18}
			\hline
			$\varpi$ & SR\ &\ $c_{\lambda}$& NMAE & rank&SPR  &time&$c_{\lambda}$&  NMAE & rank&SPR & time&\ $c_{\lambda}$& NMAE & rank&SPR  &time&$c_{\lambda}$& NMAE & rank  &SPR&time&$c_{\lambda}$& NMAE & rank&SPR & time&$c_{\lambda}$& NMAE & rank&SPR & time\\
			\hline	
	    &0.15 &0.1& 0.2268 & 1.6 &10.3& 112.2 &0.06&0.2257 &1.2&14.7&337.4 &0.10&0.2286 &3.2&5.76&62.2  
       & 0.1& 0.2171 & 2.2 &7.97&246.0 &0.06& 0.2111 & 1.0 &13.5& 546.0 & 0.1& 0.2175 & 3.5 &4.73&207.8\\
  I	&0.25 &0.1& 0.2140 & 1.6 &9.94& 103.3 &0.06&0.2091 &1.0  &13.6&313.0 &0.10&0.2145 &4.0&5.07&62.9  
      & 0.1& 0.2051 & 2 &5.87&158.8 &0.06& 0.2006 & 1.0 &11.4& 470.4 & 0.1& 0.2050 & 5.8 &3.50&268.3\\
			\hline
			
		&0.15 &0.3& 0.2177&2.6&9.30&113.3& 0.2& 0.2140 &1.2 &13.6&315.7  &0.40&0.2200 &2.6&6.10&90.7  
        &0.3&0.2094 &2.8&7.97&279.2&0.2& 0.2055 & 1.0 &12.2& 652.9& 0.4& 0.2214 & 2.0 &7.27&126.0\\
	II	&0.25 &0.3& 0.2068&2.2&7.91&114.0& 0.2& 0.2037 &1.0 &12.8& 333.6 &0.40 &0.2092 &2.4&5.53&80.6   
    &0.3&0.1979&2.4 &4.77&203.6&0.2& 0.1969&1&9.84 & 639.6& 0.4& 0.2092 & 2.6 &4.75& 165.6\\
			\hline
			
  &0.15 &0.02& 0.2233 &1.0 & 47.1&51.1&0.01&0.2138 &1 &51.7 & 158.7  &0.01  &0.2184  &6.2 &61.1 &  97.3    
        &0.03 &0.2113& 1.2 & 53.8 & 178.9 &0.018&  0.2045 & 1.0 & 55.5 & 321.4  &0.03 &0.2096& 20 & 76.1 & 314.9\\
  III   &0.25 &0.02& 0.2072 &1.4& 52.2&55.1&0.01&0.2012&1 & 52.8& 144.6  & 0.01 &0.2024  &9.6 &65.9 &131.3    
        &0.03& 0.1954 &2.0 & 62.6 & 187.2 &0.018 &  0.1944 & 1.0 & 58.4 & 751.8  &0.03 &0.1973& 50 & 84.4 & 320.9 \\
			\hline
			
		&0.15 &0.3& 0.2231&1.2&9.91&62.9 &0.18& 0.2174 & 1.0 &11.9& 265.4 &0.30& 0.2292 & 1.6&6.31 & 70.7  
        &0.3&  0.2152 & 1.4&7.66 & 223.9 &0.15& 0.2072 & 1.0 &10.7&851.4 & 0.3& 0.2208 & 2.2 &4.73&149.6\\
	IV	&0.25 &0.3& 0.2126&1.0&9.12&55.2&0.18& 0.2064 & 1.0 &11.1& 289.4 &0.30& 0.2192 & 2.2&5.07 &50.6
        &0.3&0.2046 & 1.6 &5.45& 173.2 &0.15& 0.1979 & 1.0 & 9.48&779.5 & 0.3& 0.2102 & 2.6 &3.51&139.0\\
			\hline
			
   &0.15 &0.3&  0.2152 &3.6&10.2 &179.3 &0.18& 0.2155 & 3.0 &6.83&350.6 &0.30& 0.2169 & 4.2&3.06 & 100.9  
   &0.35&  0.2083 & 2.6 & 3.77 & 268.9&0.2& 0.2032 & 1.2& 8.04 & 738.6&0.3& 0.2083 & 3.4 &2.48&358.2\\
V  &0.25 &0.3&  0.2042 &3.0&3.92 &92.9 &0.18& 0.2022 & 2.6 &4.93&352.4  &0.30& 0.2054 & 3.0&2.76 & 103.2  
   &0.35&  0.1970 & 2.2 & 2.89 & 210.8 &0.2& 0.1954 &1.0&7.19 & 708.2&0.3& 0.1959 & 3.0 &1.81&255.4 \\
			
			\hline
	\end{tabular*}
\end{sidewaystable}

 Next we test the jester joke dataset from \url{http://www.ieor.berkeley.edu/~goldberg/jester-data/}. For this dataset, we randomly select $n_1$ users' ratings from $M^0$, and then randomly permute the ratings from the users to generate $M^*\!\in\mathbb{R}^{{n_1}\times n_2}$ with $n_2=100$. We generate a set of observed indices $\Omega$ with the sampling scheme in \eqref{sampling-scheme}, and then the observation matrix $M_{\Omega}$ via \eqref{observe}. Since we can only observe those entries $M_{jk}$ with $M_{jk}$ available and $(j,k)\in\Omega$, the actual sampling ratio is less than the input SR. We consider different settings of $n_1$ and SR, and report the average NMAE, rank, SPR and running time (in seconds) obtained by running $5$ times for each setting in Table \ref{tabJester3}. Consider that the subGM yields the worst NMAEs among the three solvers, and requires the comparable running time with the iLPA, we do not report its result for this dataset and thee movie-1M later. From Table \ref{tabJester3}, for all the jester-3 examples, the iLPA and the PAM yield the comparable NMAEs and require almost the same running time. The ranks returned by the iLPA is a little higher than those given by the PAM, but the sparsity ratios returned by the former are generally higher than those yielded by the latter.
%----------------------------------------------------------------------------------
\setlength{\tabcolsep}{1mm}
\begin{table}[h]
\setlength{\belowcaptionskip}{-0.01cm}
	\setlength\tabcolsep{1.4pt}
	\renewcommand\arraystretch{1.3}
	\centering
	\tiny
	\caption{\small Average NMAE and running time of iLPA and PAM for the jester-3 dataset}\label{tabJester3}
	
	%	\begin{tabular*}{cc|lccc|lccc||lccc|lccc}
          \begin{tabular*}{\textwidth}{@{\extracolsep{\fill}}cc|lcccc|lcccc||lcccc|lcccc@{\extracolsep{\fill}}}
			\hline
			& & \multicolumn{5}{l}{\quad  iLPA ($n_{1}\!=\!1000$)}&\multicolumn{5}{l||}{\quad PAM ($n_1\!=\!1000$)}& \multicolumn{5}{l}{\quad iLPA ($n_{1}\!=\!5000$)}&\multicolumn{5}{l}{\quad PAM ($n_{1}\!\!=\!5000$)}\\
			%\cmidrule(lr){3-6} \cmidrule(lr){7-10}\cmidrule(lr){11-14} \cmidrule(lr){15-18}
			\hline
			$\varpi$ & SR\ &\ $c_{\lambda}$& NMAE & rank&SPR  &time&$c_{\lambda}$&  NMAE & rank&SPR & time&$c_{\lambda}$& NMAE & rank&SPR  &time&$c_{\lambda}$& NMAE & rank &SPR& time\\
			\hline	
	&0.15 &0.2& 0.2259 & 2.8 & 20.2&0.59&0.2& {\bf 0.2255} & 2.6 &23.3& 0.52 &0.2& {\bf 0.2229} &5 &40.8& 3.15 &0.2& 0.2250 & 2.8 &31.0&4.05\\
I	&0.25 &0.2& {\bf 0.2173} & 4.4 & 28.1&0.28 &0.2& 0.2181 & 2.6 &14.9& 0.51 &0.2&  0.2175 &6.4 &42.5& 4.56 &0.2&  0.2175 & 2.8 &20.3&3.77\\
   \hline
     &0.15 &0.2& 0.2171 & 3.2 &20.7 &0.24 &0.2&{\bf 0.2170 }&4 &17.0& 0.49 &0.2&{\bf 0.2154} & 5.6& 43.8 &5.06&0.2& 0.2158 & 5.4 & 32.7& 4.31\\
II   &0.25 &0.2& {\bf 0.2062} & 6 &29.1 &0.40 &0.2&0.2068 &4.8 &22.8& 0.73 &0.2& {\bf 0.2086} & 7.8& 41.4 & 6.01 &0.2& 0.2095 & 6.4 &30.5& 4.89\\	
			\hline
	  &0.15 &0.03& 0.2273 & 8.2 & 61.9&0.33&0.03& {\bf 0.2248} & 9 &52.0& 0.31 &0.03& 0.2195 & 9.2 &66.9& 2.88&0.03& 0.2195 & 8.4 & 64.7 & 2.41\\
 III    &0.25 &0.03& 0.2254 & 8.4 & 53.5&0.54&0.03& {\bf 0.2228} & 9.4 &49.4& 0.23 &0.03& 0.2077 & 6 &56.8& 2.72&0.03& {\bf 0.2072} & 2.8 & 42.7 & 1.96\\	
			\hline
			
   &0.15 &0.2& 0.2182& 3.4 &25.3& 0.25 &0.2&  0.2182&3.8 &11.8& 0.43 &0.2& {\bf 0.2164}& 5.4 & 45.4 & 10.1 &0.2&  0.2168 &4.8 &33.2&5.04\\
IV	&0.25 &0.2& {\bf 0.2080}& 6 &27.6&0.23 &0.2& 0.2082& 4.6 &20.9& 0.43 &0.2& {\bf 0.2102}& 7.6 & 31.3 & 4.18 &0.2&0.2106 &6.4 &23.5 &5.15\\	
			\hline
			
	&0.15 &0.2& 0.2166 & 3.8 &26.1 & 0.24 &0.2&{\bf 0.2165} & 4.2 & 13.7& 0.48&0.2&{\bf 0.2149}& 5.6 &44.1&5.47&0.2& 0.2155 &5.4&30.3& 4.64\\
V	&0.25 &0.2& {\bf 0.2057}& 5.8 & 23.1 & 0.24 &0.2&{\bf0.2065 }& 4.8 & 17.2& 0.44 &0.2&{\bf 0.2077}&8.2 &34.4&5.70&0.2& 0.2091 &6.4&23.8& 6.29\\	
			
			\hline
	\end{tabular*}
\end{table}
%---------------------------------------------------------------------------------------------

To test the movie-1M dataset contained in the movieLens dataset,  we first randomly select $n_1$ users and their $n_2$ column ratings from $M^0$ to formulate $M^*\in\mathbb{R}^{n_1\times n_2}$, sample the observed entries, and then obtain the observation matrix $M_{\Omega}$ via \eqref{observe}. We consider different setting of $n_1=n_2$ and SR. Table \ref{tabMovie-1M} reports the average NMAE, rank, SPR and running time (in seconds) obtained by running $5$ times for each setting. As shown by Figure \ref{fig4}, for this dataset, the iLPA and the PAM yield the desirable NMAEs when the parameter $\lambda$ is such that the target rank equals $1$. Inspired by this and the fairness of comparisons, we report their results for solving \eqref{SCAD-loss} with their respective $c_{\lambda}$ such that the target rank is equal to $1$. We see that for most of test examples, the NMAEs returned by the iLPA are a little higher than those returned by the PAM, but the running time of the iLPA is much less than that of the PAM for all examples. This also matches the performance of the two solvers demonstrated in Figure \ref{fig4}.  
    
%------------------------------------------------------------------
\setlength{\tabcolsep}{1mm}
\begin{table}[h]
	\setlength{\belowcaptionskip}{-0.01cm}
	\setlength\tabcolsep{1.2pt}
	\renewcommand\arraystretch{1.3}
	\centering
	\tiny
	\caption{\small Average NMAE and running time of iLPA and PAM for movie-1M dataset}\label{tabMovie-1M}
	
		%\begin{tabular}{cc|lccc|lccc||lccc|lccc}
         \begin{tabular*}{\textwidth}{@{\extracolsep{\fill}}cc|lcccc|lcccc||lcccc|lcccc@{\extracolsep{\fill}}}
			\hline
			& & \multicolumn{5}{l}{\quad \quad  iLPA($n_{1}\!=\!3000$)}&\multicolumn{5}{l||}{\ \quad \quad PAM($n_1\!=\!3000$)}& \multicolumn{5}{l}{\quad \quad iLPA($6040\times 3706$)}&
			\multicolumn{5}{l}{\quad \quad PAM($6040\times 3706$)}\\
		%	\cmidrule(lr){3-6} \cmidrule(lr){7-10}\cmidrule(lr){11-14} \cmidrule(lr){15-18}
			\hline
			$\varpi$ & SR\ &\ $c_{\lambda}$& NMAE & rank  &SPR&time&$c_{\lambda}$&  NMAE & rank &SPR& time& $c_{\lambda}$& NMAE & rank  &SPR&time&$c_{\lambda}$& NMAE & rank &SPR& time\\
			\hline	
			
   &0.15&0.1&0.2170 & 1 & 9.52& 11.4 &0.07& 0.2136&1& 11.3 &51.3 & 0.1&0.2101 & 1.2 & 7.48 &38.9&0.07& 0.2064 & 1 & 4.38& 59.6\\
I  &0.25&0.1&0.2058 & 1 & 8.79& 10.7 &0.07& 0.2034&1 & 4.19 & 19.3& 0.1& 0.2007 & 1 & 7.76 & 33.1 &0.07& 0.1991 & 1 & 3.59&47.0\\
			\hline			
			
 &0.15 &0.35&  0.2130&1 & 8.68 &12.0 &0.2& 0.2069 &1& 10.4 &63.8 &0.35&0.2063 &1& 7.24 & 33.3 &0.2& 0.2006 & 1 & 9.57& 171.1\\
II&0.25 &0.35& 0.2024 &1 & 7.81 & 12.6 &0.2& 0.1979 &1& 9.55 & 66.2 &0.35& 0.1976 &1.2 & 5.89 & 31.6 &0.2& 0.1937 & 1 & 7.36& 183.5 \\
			
			\hline
			
    & 0.15&0.02& 0.2167 &1 & 48.8& 11.3 &0.01& 0.2073 & 1 & 51.1& 20.1 &  0.03& 0.2077 &1 & 56.6 & 37.7 & 0.02 & 0.2018 &1 & 57.8 & 134.6\\
III	& 0.25&0.02& 0.2007 &1 & 50.4& 11.9 &0.01&  0.1962& 1 & 51.7 &18.2&  0.03& 0.1945 & 1.2 & 56.6 & 40.5 & 0.02 & 0.1922 &1 & 55.0 & 144.9\\
			
				\hline
	&0.15&0.3& 0.2166& 1 & 7.24&13.0& 0.2& 0.2109 &1 & 8.87 & 67.1 &0.3&  0.2094 & 1 & 6.49 & 43.6&0.2& 0.2038 & 1 & 8.55& 219.9\\
IV 	&0.25&0.3& 0.2055& 1 & 7.00 & 13.0 & 0.2& 0.2010 &1 & 6.73 & 55.9 &0.3&  0.2004 & 1 & 6.29 & 35.9 &0.2& 0.1962 & 1 & 7.39 &167.7\\			
   \hline
			
   &0.15 &0.4& 0.2116 & 1 & 5.22& 13.9&0.2& 0.2049 & 1 & 7.21 & 77.8 & 0.4& 0.2052 & 1 & 4.92&37.8 &0.2& 0.1988 & 1 & 7.37&203.6\\
V	&0.25 &0.4& 0.2015 & 1 & 5.19& 13.0&0.2& 0.1964 & 1 & 7.53 & 82.0& 0.4 & 0.1966 & 1.2 & 3.98 & 39.4 &0.2& 0.1924 & 1 & 5.78& 184.3\\				
    \hline
 \end{tabular*}
\end{table}

% {\color{blue} To sum up, the numerical comparisons in Section  \ref{sec6.2} indicate that the proposed iLPA yields the solutions with better quality for the DC programs with nonsmooth components, and the numerical comparisons in Section \ref{sec6.3} show that for the matrix completion with outliers under non-uniform sampling, it yields the better relative errors and NMAEs within less running time than the PAM, and the better relative errors and NMAEs within the comparable running time with the subGM. 
% } 

%--------------------------------------------------------------------------------
\section{Conclusions}\label{sec7}

 We proposed an inexact LPA for solving the DC composite optimization problem \eqref{prob}, and established the full convergence of its iterate sequence under Assumptions \ref{ass0}-\ref{ass2} and the KL property of the potential function $\Xi$. If $\Xi$ satisfies the KL property of exponent $p=1/2$, the convergence has the R-linear rate. We provided a verifiable condition for the KL property of $\Xi$ with exponent $p\in[1/2,1)$ by leveraging such a property for the almost separable function $f$ defined in \eqref{ffun} and the condition \eqref{key-cond}, which is demonstrated to be weaker than the one obtained in \cite[Theorem 3.2]{LiPong18} for identifying the KL property of exponent $p\in[0,1)$ for a general composite function, and also discussed its relation with the regularity or quasi-regularity conditions used in \cite{HuYang16} for the case $\vartheta_2\equiv 0$ and $h\equiv 0$. For the iLPA armed with dPPASN for solving subproblems, numerical comparison with the nmBDCA \cite{Ferreria21} on some common DC program examples indicates that it more possibly seeks better solutions, while numerical comparisons with the PAM and the subGM for matrix completion with outliers under non-uniform sampling show that it yields the better relative errors and the comparable NMAEs within much less running time than the PAM, and the better relative errors and NMAEs within the comparable running time with the subGM.

 The proposed iLPA is also adequate for structured nonconvex and nonsmooth problems from image reconstruction. An inexact proximal MM algorithm along this line was recently proposed in  \cite{LiPanZeng25} for solving the nonconvex and nonsmooth composite models from linear image restoration problems for deblurring and inpainting, and a class of nonlinear image reconstruction for Fourier
phase retrieval.
%USE THE BELOW OPTIONS IN CASE YOU NEED AUTHOR YEAR FORMAT.
%\bibliographystyle{abbrvnat}
%\bibliography{reference}

%\bibliographystyle{plain}
%\bibliography{reference}

\begin{thebibliography}{10}
\bibitem{Artacho08}
{\sc F.~J. Arag\'{o}n~Artacho and M.~H. Geoffroy}, {\em Characterization of
  metric regularity of subdifferential}, Journal of Convex Analysis, 15 (2008),
  pp.~365--380.


% \bibitem{Ackooij19}
%{\sc W.~V. Ackooij and W.~D. Oliveira}, {\em Non-smooth DC-constrained optimization: constraint qualification and minimizing methodologies}, Optimization Method \& Software, 34 (2019),
% pp.~1029--4937.
 
% \bibitem{Artina13}
%  {\sc M.~Artina, M.~Fornasier and F.~Solombrino}, {\em Linearly constrained nonsmooth and nonconvex minimization}, SIAM Journal on Optimization, 23 (2013), pp.~1904--1937.

\bibitem{Artacho20}
{\sc F.~J.~A. Artacho and P.~T. Vuong}, {\em The boosted difference of convex
  functions algorithm for nonsmooth functions}, SIAM Journal on Optimization,
  30 (2020), pp.~980--1006.

\bibitem{Attouch09}
{\sc H.~Attouch and J.~Bolte}, {\em On the convergence of the proximal
  algorithm for nonsmooth functions involving analytic features}, Mathematical
  Programming, 116 (2009), pp.~5--16.

\bibitem{Attouch10}
{\sc H.~Attouch, J.~Bolte, P.~Redont, and A.~Soubeyran}, {\em Proximal
  alternating minimization and projection methods for nonconvex problems: an
  approach based on the Kurdyka-{\L}ojasiewicz inequality}, Mathematics of
  Operations Research, 35 (2010), pp.~438--457.

\bibitem{Auslender10}
{\sc A.~Auslender, R.~Shefi, and M.~Teboulle}, {\em A moving balls
  approximation method for a class of smooth constrained minimization
  problems}, SIAM Journal on Optimization, 20 (2010), pp.~3232--3259.

% \bibitem{BaiLi22}
% {\sc S.~X. Bai, M.~H. Li, C.~W. Lu, D.~L. Zhu, and S.~E. Deng}, {\em The
%   equivalence of three types of error bounds for weakly and approximately
%   convex functions}, Journal of Optimization Theory and Application, 194
%   (2022), pp.~220--245.
  
\bibitem{Barkova21}
{\sc M.~V. Barkova and A.~S. Strekalovskiy}, {\em Computational study of local search methods for a D.C. optimization problem with inequality constraints}, Journal of Optimization Theory and Application, International Conference on Optimization and Applications, (2021), pp.~94--109.  

%\bibitem{Bolte07}
%{\sc J.~Bolte, A.~Daniilidis, A.~Lewis, and M.~Shiota}, {\em Clarke
%  subgradients of stratifiable functions}, SIAM Journal on Optimization, 18
%  (2007), pp.~556--572.

\bibitem{Bolte09}
{\sc J.~Bolte, A.~Daniilidis, and A.~Lewis}, {\em Tame functions are semismooth}, Mathematical Programming, 117 (2009), pp.~5--19.

\bibitem{Bolte17}
{\sc J.~Bolte, T.~P. Nguyen, J.~Peypouquet, and B.~W. Suter}, {\em From error
  bounds to the complexity of first-order descent methods for convex
  functions}, Mathematical Programming, 165 (2017), pp.~471--507.

\bibitem{Bolte16}
{\sc J.~Bolte and E.~Pauwels}, {\em Majorization-minimization procedures and
  convergence of sqp methods for semi-algebraic and tame programs}, Mathematics
  of Operations Research, 41 (2016), pp.~442--465.

\bibitem{Bolte14}
{\sc J.~Bolte, S.~Sabach, and M.~Teboulle}, {\em Proximal alternating
  linearized minimization for nonconvex and nonsmooth problems}, Mathematical
  Programming, 146 (2014), pp.~459--494.

\bibitem{BS00}
{\sc J.~F. Bonnans and A.~S. Sharpiro}, {\em Perturbation Analysis of
  Optimization}, Springer, New York, 2000.

\bibitem{Burke95}
{\sc J.~V. Burke and M.~C. Ferris}, {\em A gauss-newton method for convex
  composite optimization}, Mathematical Programming, 71 (1995), pp.~179--194.

 \bibitem{Cartis11}
{\sc C.~Cartis, N.~I.~M. Gould and P.~L. Toint}, {\em On the evaluation complexity of composite function minimization with applications to nonconvex nonlinear programming}, SIAM Journal on Optimization, 21 (2011), pp.~1721--1739. 

  \bibitem{Clarke83}
{\sc F.~H. Clarke}, {\em Optimization and Nonsmooth Analysis}, New York, 1983.

\bibitem{Charisopoulos21}
{\sc V.~Charisopoulos, Y.~D. Chen, D.~Davis, M.~Diaz, L.~J. Ding, and D.~Drusvyatskiy}, {\em Low-rank matrix recovery with composite optimization:
  good conditioning and rapid convergence}, Foundations of Computational
  Mathematics, 21 (2021), pp.~1505--1593.


 \bibitem{Charisopoulos2019}
{\sc V. Charisopoulos, D. Davis, M. Diaz and D. Drusvyatskiy}, {\em Composite optimization for robust blind deconvolution}, arXiv:1901.01624, 2019.


% \bibitem{Chen12}
% {\sc C.~H. Chen, B.~S. He, and X.~M. Yuan}, {\em Matrix completion via an
%   alternating direction method}, IMA Journal of Numerical Analysis, 32 (2012),
%   pp.~227--245.



\bibitem{Davis18}
{\sc D.~Davis, D.~Drusvyatskiy, K.~J. MacPhee and C.~Paquette}, {\em Subgradient methods for sharp weakly convex functions}, Journal of Optimization Theory and Applications, 179 (2018),
  pp.~962--982. 



\bibitem{Dong21}
{\sc H.~B. Dong and M.~Tao}, {\em On the linear convergence to weak/standard
  d-stationary points of dca-based algorithms for structured nonsmooth dc
  programming}, Journal of Optimization Theory and Applications, 189 (2021),
  pp.~190--220.

\bibitem{Dries96}
{\sc Lou Van den, Dries and C.~Miller}, {\em Geometric categories and o-minimal structures}, Duke Mathematical Journal, 84 (1996), pp.~497--540.

\bibitem{Drusvyatskiy19}
{\sc D.~Drusvyatskiy and C.~Paquette}, {\em Efficiency of minimizing compositions of convex functions
and smoothmaps}, Mathematical Programming, 178 (2019), pp.~503--558. 


  \bibitem{Duchi19}
{\sc J. C. Duchi and F. Ruan},
{\em Solving (most) of a set of quadratic equalities: composite optimization for robust phase retrieval}, Information and Inference: A Journal of the IMA, 8(2019), pp.~471--529. 
  

\bibitem{Fan01}
{\sc J.~Q. Fan and R.~Z. Li}, {\em Variable selection via nonconcave penalized likelihood and its oracle properties}, Journal of American Statistics
  Association, 96 (2001), pp.~1348--1360.

\bibitem{Fang18}
{\sc E.~X. Fang, H.~Liu, K.~C. Toh, and W.~X. Zhou}, {\em Max-norm optimization
  for robust matrix recovery}, Mathematical Programming, 167 (2018), pp.~5--35.

\bibitem{Ferreria21}
{\sc O.~P. Ferreria, E.~M. Santos, and J.~C.~O. Souza}, {\em A boosted dc algorithm for non-differentiable dc components with non-monotone line search}, Computational Optimization and Applications, 88(2024), pp.~783--818

\bibitem{Fletcher82}
{\sc R.~Fletcher}, {\em A model algorithm for composite nondifferentiable
  optimization problems}, Mathematical Programming Study, 17 (1982),
  pp.~67--76.


  \bibitem{Geferee11}
  {\sc H.~Gfrerer},
  {\em First order and second order characterizations of metric subregularity and calmness of constraint set mappings},
  SIAM Journal on Optimization, 21(2011): 1439-1474.

% \bibitem{Gong13}
% {\sc P.~H. Gong, C.~S. Zhang, Z.~S. Lu, J.~H. Huang, and J.~P. Ye}, {\em A
%   general iterative shrinkage and thresholding algorithm for non-convex
%   regularized optimization problems}, In: International Conference on Machine
%   Learning,  (2013), pp.~37--45.
  
 \bibitem{Hien23}
 {\sc K.~T.~L. Hien, D.~N. Phan and N.~Gillis}, 
 {\em An inertial block majorization minimization framework for nonsmooth nonconvex optimization}, Journal of Machine Learning Research, 24 (2023): 1-41.

\bibitem{HuYang16}
{\sc Y.~H. Hu, C.~Li, and X.~Q. Yang}, {\em On convergence rates of linearized
  proximal algorithms for convex composite optimization with applications},
  SIAM Journal on Optimization, 26 (2016), pp.~1207--1235.

\bibitem{Ioffe09}
{\sc A.~D. Ioffe}, {\em An invitation to tame optimization}, SIAM Journal on
  Optimization, 19 (2009), pp.~1894--1917.

% \bibitem{Ioffe08}
% {\sc A.~D. Ioffe and J.~V. Outrata}, {\em On metric and calmness qualification
%   conditions in subdifferential calculus}, Set-Valued Analysis, 16 (2008),
%   pp.~199--227.

\bibitem{Jonas18}
{\sc J.~Geiping and M.~Moeller}, {\em Composite optimization by nonconvex majorization-minimization}, SIAM Journal on Imaging Science, 11 (2018),
  pp.~2494--2528.  

\bibitem{LeTai18Jota}
{\sc H.~A. Le~Thi, V.~N. Huynh, and T.~Pham~Dinh}, {\em Convergence analysis of
  difference-of-convex algorithm with subanalytic data}, Journal of
  Optimization Theory and Applications, 179 (2018), pp.~103--126.


\bibitem{LeThi23}
{\sc H. A. Le Thi, V. N. Huynh and T. Pham Dinh},
{\em Minimizing compositions of differences-of-convex functions with smooth mappings}, Mathematics of Operations Research, 49(2023), pp.~1140--1168.

\bibitem{LeTai05}
{\sc H.~A. Le~Thi and T.~Pham~Dinh}, {\em The dc (difference of convex
  functions) programming and dca revisited with dc models of real world
  nonconvex optimization problems}, Annals of Operations Research, 133 (2005),
  pp.~23--46.

\bibitem{LeTai18}
{\sc H.~A. Le~Thi and T.~Pham~Dinh}, {\em Dc programming and dca: Thirty years
  of developments}, Mathematical Programming, 169 (2018), pp.~5--68.

\bibitem{Lewis16}
{\sc A.~S. Lewis and S.~J. Wright}, {\em A proximal method for composite
  minimization}, Mathematical Programming, 158 (2016), pp.~501--546.

\bibitem{Li07}
{\sc C.~Li and K.~F. Ng}, {\em Majorizing functions and convergence of the
  gauss-newton method for convex composite optimization}, SIAM Journal on
  Optimization, 18 (2007), pp.~613--642.

% \bibitem{Li02}
% {\sc C.~Li and X.~H. Wang}, {\em On convergence of the gauss-newton method for
%   convex composite optimization}, Mathematical Programming, 91 (2002),
%   pp.~349--356.

\bibitem{LiMor12}
{\sc G.~Y. Li and B.~S. Mordukhovich}, {\em H\"{o}lder metric subregularity with applications to proximal point method}, SIAM Journal on Optimization, 22 (2012),
  pp.~1655--1684. 

\bibitem{LiPanZeng25}
{\sc B.~J. Li, S.~H. Pan and T.~Y. Zeng}, {\em An inexact proximal majorization–minimization method for a class of image reconstruction models}, Inverse Problems, 41 (2025), DOI 10.1088/1361-6420/add6d1.


\bibitem{LiPong18}
{\sc G.~Y. Li and T.~K. Pong}, {\em Calculus of the exponent of
  kurdyka-{\l}ojasiewicz inequality and its applications to linear convergence
  of first-order methods}, Foundations of Computational Mathematics, 18 (2018),
  pp.~1199--1232.

\bibitem{LiZhu20}
{\sc X.~Li, Z.~H. Zhu, A.~M.~C. So, and R.~Vidal}, {\em Nonconvex robust
  low-rank matrix recovery}, SIAM Journal on Optimization, 30 (2020),
  pp.~660--686.

\bibitem{LiuPanWY22}
{\sc R.~Y. Liu, S.~H. Pan, Y.~Q. Wu, and X.~Q. Yang}, {\em An inexact
  regularized proximal newton method for nonconvex and nonsmooth optimization},
  2022, \url{https://arxiv.org/abs/arXiv:2209.09119v4}.

\bibitem{LiuPong19}
{\sc T.~X. Liu, T.~K. Pong, and A.~Takeda}, {\em A refined convergence analysis of pdcae with applications to simultaneous sparse recovery and outlier detection}, Computational Optimization and Applications, 73 (2019),
  pp.~69--100.

% \bibitem{LiuPan22}
% {\sc Y.~L. Liu and S.~H. Pan}, {\em Twice epi-differentiability of a class of
%   non-amenable composite functions}, 2022,
%   \url{https://arxiv.org/abs/arXiv:2212.00303}.

\bibitem{Lu12}
{\sc Z.~S. Lu}, {\em Sequential convex programming methods for a class of
  structured nonlinear programming}, 2012,
  \url{https://arxiv.org/abs/arxiv:1210.3039}.

\bibitem{Lu19}
{\sc Z.~S. Lu, Z.~R. Zhou, and Z.~Sun}, {\em Enhanced proximal dc algorithms
  with extrapolation for a class of structured nonsmooth dc minimization},
  Mathematical Programming, 176 (2019), pp.~369--401.

\bibitem{Luque84}
{\sc F.~J. Luque}, {\em  Asymptotic convergence analysis of the proximal point algorithm },
 SIAM Journal on Control and Optimization, 22 (1984), pp.~277--293.  

%\bibitem{Mohammadi20}
%{\sc A.~Mohammadi and M.~E. Sarabi}, {\em Twice epi-differentiability of
%  extended-real-valued functions with applications in composite optimization},
%  SIAM Journal on Optimization, 30 (2020), pp.~2379--2409.

%\bibitem{Mordu94}
%{\sc B.~S. Morduhovich}, {\em Generalized differential calculus for nonsmooth
 % and set-valued mappings}, Journal of Mathematical Analysis and Applications,
  %183 (1994), pp.~250--288.

%\bibitem{Mordu15}
%{\sc B.~S. Morduhovich and O.~Y. Wei}, {\em Higher-order metric subregularity
 % and its applications}, Journal of Global Optimization, 63 (2015),
  %pp.~777--795.

%\bibitem{Meng05}
%{\sc F.~W. Meng and D.~F. Sun}, {\em Semismoothness of solutions to generalized equations %and the Moreau-Yosida regularization}, Mathematical Programming, 104 (2015),
 % pp.~561--581.
  

\bibitem{Mordu23}
{\sc B.~S. Morduhovich, X.~M. Yuan, S.~Z. Zeng and J.~Zhang}, {\em A globally convergent proximal {N}ewton-type method in nonsmooth convex optimization}, Mathematical Programming, 198 (2023), pp.~899--936.

\bibitem{Nguyen17}
{\sc T.~A. Nguyen and M.~N. Nguyen}, {\em Convergence analysis of a proximal
  point algorithm for minimizing differences of functions}, Optimization, 66
  (2017), pp.~129--147.

\bibitem{Oliveira19}
{\sc D.~Oliveira, W and M.~P. Tcheou}, {\em An inertial algorithm for dc
  programming}, Set-Valued and Variational Analysis, 27 (2019), pp.~895--919.

 \bibitem{Ortega70}
{\sc J.~M. Ortega and W.~C Rheinboldt}, {\em Iterative Solution of Nonlinear Equations in Several Variables}, Academic Press, 1970. 

\bibitem{Pang17}
{\sc J.~S. Pang, M.~Razaviyayn, and A.~Alvarado}, {\em Computing b-stationary
  points of nonsmooth dc programs}, Mathematics of Operations Research, 42
  (2017), pp.~95--118.

\bibitem{Pauwels16}
{\sc E.~Pauwels}, {\em The value function approach to convergence analysis in
  composite optimization}, Operations Research Letters, 44 (2016),
  pp.~790--795.

\bibitem{Pham97}
{\sc T.~Pham~Dinh and H.~A. Le~Thi}, {\em Convex analysis approach to dc
  programming: Theory, algorithms and applications}, Acta Mathematica
  Vietnamica, 22 (1997), pp.~289--355.

%\bibitem{Pham86}
%{\sc T.~Pham~Dinh and E.~B. Souad}, {\em Algorithms for solving a class of
%  nonconvex optimization problems: methods of subgradient}, Mathematics for
%  optimization. Fermat days. North Holland: Elsevier, 85 (1986), pp.~249--270.


% \bibitem{QiSun93}
% {\sc L.~Q. Qi and J.~Sun}, {\em A nonsmooth version of Newton's method}, Mathematical Programming Study, 58 (1993), pp.~353--367.

\bibitem{Razaviyayn13}
{\sc M.~Razaviyayn, M.~Y. Hong and Z.~Q. Luo}, {\em A unified convergence analysis of block successive minimization methods for nonsmooth optimization}, SIAM Journal on Optimization, 23 (2013), pp.~1126--1153.


\bibitem{Robinson81}
{\sc S.~M. Robinson}, {\em Some continuity properties of polyhedral
  multifunctions}, Mathematical Programming Study, 14 (1981), pp.~206--214.

\bibitem{Roc70}
{\sc R.~T. Rockafellar}, {\em Convex Analysis}, Princeton University Press,
  1970.

 \bibitem{Roc76}
{\sc R.~T. Rockafellar}, {\em  Monotone operators and the proximal point algorithm}, SIAM Journal on Control and Optimization, 14(1976), PP.~877--898.  

%   \bibitem{Roc76}
% {\sc R.~T. Rockafellar}, {\em Augmented Lagrangians and applications of the proximal point algorithm in convex programming}, Mathematics of Operations Research,
%   1(1976), PP.~97--116.

%    \bibitem{Roc21}
% {\sc R.~T. Rockafellar}, {\em Advances in convergence and scope of the proximal point algorithm}, Journal of Nonlinear and Convex Analysis,
%   22(2021), PP.~2347--2374.

\bibitem{RW98}
{\sc R.~T. Rockafellar and R.~J.-B. Wets}, {\em Variational analysis},
  Springer, 1998.

\bibitem{Sra11}
{\sc S.~Sra, S.~Nowozin and S.~J. Wright}, 
{\em Optimization for Machine Learning}, MIT Press, Cambridge, 2011.
  

\bibitem{Souza16}
{\sc J.~C.~O. Souza, P.~R. Oliveira, and A.~Soubeyran}, {\em Global convergence
  of a proximal linearized algorithm for difference of convex functions},
  Optimization Letters, 10 (2016), pp.~1529--1539.

%  \bibitem{Strekalovsky18}
%{\sc A.~S. Strekalovsky and I.~M Minarchenko}, {\em A local search method for optimization problem with d.c. inequality constraints},
%   Applied Mathematical Modelling, 59 (2018), pp.~229--244.

\bibitem{Sun03}
{\sc W.~Y. Sun, R.~J.~B. Sampaio, and M.~A.~B. Candido}, {\em Proximal point
  algorithm for minimization of dc functions}, Journal of Computational
  Mathematics, 21 (2003), pp.~451--462.

\bibitem{Toh10}
{\sc K.~C. Toh and S.~Yun}, {\em An accelerated proximal gradient algorithm for
  nuclear norm regularized linear least squares problems}, Pacific Journal of
  Optimization, 6 (2010), pp.~615--640.

\bibitem{Wen18}
{\sc B.~Wen, X.~J. Chen, and T.~K. Pong}, {\em Aproximal difference-of-convex
  algorithmwith extrapolation}, Computational Optimization and Applications, 69
  (2018), pp.~297--324.

\bibitem{WuPanBi21}
{\sc Y.~Q. Wu, S.~H. Pan, and S.~J. Bi}, {\em Kurdyka-{\l}ojasiewicz property
  of zero-norm composite functions}, Journal of Optimization Theory and
  Applications, 188 (2021), pp.~94--112.

\bibitem{YuLiPong21}
{\sc P.~R. Yu, G.~Y. Li, and T.~K. Pong}, {\em Kurdyka-{\l}ojasiewicz exponent
  via inf-projection}, Foundations of Computational Mathematics, 22 (2021),
  pp.~1171--1271.

\bibitem{YuLu21}
{\sc P.~R. Yu, T.~K. Pong, and Z.~S. LU}, {\em Convergence rate analysis of a
  sequential convex programming method with line search for a class of
  constrained difference-of-convex optimization problems}, SIAM Journal on
  Optimization, 31 (2021), pp.~2024--2054.

% \bibitem{Zhang10}
% {\sc C.~H. Zhang}, {\em Nearly unbiased variable selection under minimax
%   concave penalty}, Annals of Statistics, 38 (2010), pp.~894--942.

\bibitem{ZhangPan22}
{\sc D.~D. Zhang, S.~H. Pan, S.~J. Bi, and D.~F. Sun}, {\em Zero-norm
  regularized problems: equivalent surrogates, proximal mm method and
  statistical error bound}, Computational Optimization and Applications, 86(2023), pp.~627--667.

  \bibitem{ZhaoST10}
{\sc X.~Y. Zhao, D.~F. Sun and K.~C Toh}, {\em A Newton-CG augmented Lagrangian method for semidefinite programming}, SIAM Journal on Optimization, 20(2010), pp.~1737--1765.

\bibitem{ZhengMa24}
{\sc Z. Zheng, S. Q. Ma and L. Z. Xue}, {\em A new inexact proximal linear algorithm with adaptive stopping criteria for robust phase retrieval}, IEEE Transactions on Signal Processing, 72 (2024), pp.~1081--1093.
  
\bibitem{ZhouSo17}  
{\sc Z.~ R. Zhou and A.~ M.~ C. So}, 
{\em A unified approach to error bounds for structured convex optimization problems}, Mathematical Programming, 165 (2017), pp.~689-728. 



\end{thebibliography}

\bigskip
\noindent
{\large\bf Appendix A.} In this part, we prove that the set $\mathbb{S}_+$ is semialgebraic. Let $\mathcal{L}(\mathbb{X})$ represent the set of all linear mappings from $\mathbb{X}$ to itself.	For any $\mathcal{Q}\in\mathcal{L}(\mathbb{X})$, its smallest eigenvalue function $\lambda_{\min}$ is defined by
\[
 \lambda_{\min}(\mathcal{Q})=\min_{\|v\|=1,v\in\mathbb{X}}\langle \mathcal{Q}v,v\rangle.
\]
Then $\lambda_{\min}$ is a semialgebraic function since the set $\{v\in\mathbb{X}\mid \|v\|=1\}$ is semialgebraic. Note that the level set of a semialgebraic function is semialgebraic, so $\mathbb{S}_{--}\!:=\{\mathcal{Q}\in \mathcal{L}(X)\mid \lambda_{\min}(\mathcal{Q})<0\}$ is a semialgebraic set. Thus, $\mathbb{S}_+$ is also a semialgebraic set since it is the complementary set of $\mathbb{S}_{--}$ in $\mathcal{L}(\mathbb{X})$.

\bigskip
\noindent
{\large\bf Appendix B.} This part includes the test examples used in Section \ref{sec6.2}, where $\phi(y)\!:=\max_{1\le i\le m}y_i$.

\begin{aexample}\label{examA.1} $\mathbb{X}=\mathbb{R}^2,\mathbb{Y}=\mathbb{R}^{3}\times\mathbb{R},\mathbb{Z}=\mathbb{R}^{3}, \vartheta_1(y,t)=\phi(y)+t, \vartheta_2(z)=\phi(z)$ and
 \begin{align*}
		F(x)=(f_1^1(x);f_1^2(x);f_1^3(x);f_1^1(x)+f_2^2(x)+f_2^3(x)),\\
		G(x)=(f_2^1(x)\!+f_2^2(x);f_2^2(x)\!+\!f_2^3(x);f_2^1(x)\!+\!f_2^3(x))
	\end{align*}
	where $f_1^1(x)=x_1^4+x_2^2,f_1^2(x)=(2-x_1)^2+(2-x_2)^2,f_1^3(x)=2e^{-x_1+x_2},
	f_2^1(x)=x_1^2-2x_1+x_2^2-4x_2+4$, $f_2^2(x)=2x_1^2-5x_1+x_2^2-2x_2+4$ and $f_2^3(x)\!=x_1^2\!+2x_2^2\!-4x_2\!+1$.
\end{aexample}
\begin{aexample}\label{examA.2}
	$\mathbb{X}=\mathbb{R}^4,\mathbb{Y}=\mathbb{R}^5\times \mathbb{R}^3\times \mathbb{R}^3,\mathbb{Z}=\mathbb{R}^{3}\times\mathbb{R}$, $\vartheta_1(y_1;y_2;y_3):=\|y_1\|_1+\phi(y_2)+\phi(y_3)$, $\vartheta_2(z,t):=\|z\|_1+t$, $F(x):=(F_1(x); F_2(x); F_3(x))$ with
	\begin{align*}
		F_1(x):=(x_1-1;x_3-1;10.1(x_2-1);10.1(x_4-1);4.95(x_2+x_4-2)),\\
		F_2(x):=200(0; x_1-x_2; -x_1-x_2),\,F_3(x):=180(0;x_3-x_4;-x_3-x_4),
	\end{align*}
	and $G(x):=(100x_1;90x_3;4.95(x_2-x_4);-100x_2-90x_4)$.
\end{aexample}
\begin{aexample}\label{examA.3}
	$\mathbb{X}=\mathbb{R}^2,\mathbb{Y}=\mathbb{R}\times \mathbb{R}^3\times \mathbb{R},\mathbb{Z}=\mathbb{R},\vartheta_1(y_1;y_2;y_3)=|y_1|+\phi(y_2)+y_3,\vartheta_2(t)=|t|$, $F(x):=(F_1(x); F_2(x); F_3(x))$ with $F_1(x)=x_1-1,F_2(x)=200(1;x_1-x_2;-x_1-x_2)$
	and $F_3(x)=-x_1-x_2$, and $G(x):=100x_1$.
\end{aexample}
\begin{aexample}\label{examA.4}
	$\mathbb{X}=\mathbb{R}^2,\mathbb{Y}=\mathbb{R}\times\mathbb{R}^3\times \mathbb{R}^9,\mathbb{Z}=\mathbb{R}^3,\vartheta_1(y_1;y_2;y_3)=|y_1|+\phi(y_2)+\phi(y_3),\vartheta_2(z,t)=\|z\|_1+|t|$, $F(x):=(F_1(x); F_2(x); F_3(x))$ with $F_1(x)=x_1-1,F_2(x)=200(0;x_1-x_2;-x_1-x_2)$
	and $F_3(x)=10(x_1^2+x_2^2+x_2; x_1^2+x_2^2-x_2; x_1+x_1^2+x_2^2+x_2-0.5;x_1+x_1^2+x_2^2-x_2-0.5;
	x_1\!-1;-\!x_1\!+2x_2\!-1;x_1\!-2x_2\!-1;-x_1\!-1;x_1\!+x_1^2\!+x_2^2)$, and $G(x):=(100x_1;10x_2;-100x_2+10(x_1^2+x_2^2))$.
\end{aexample}
\begin{aexample}\label{examA.5}
	 $\mathbb{X}=\mathbb{R}^3,\mathbb{Y}=\mathbb{R}^3\times\mathbb{R}^4,\mathbb{Z}=\mathbb{R}^4,\vartheta_1(y_1;y_2)=\|y_1\|_1+\phi(y_2),\vartheta_2(z,t)=\|z\|_1+|t|$, $F(x):=(F_1(x); F_2(x))$ with $F_1(x)=2x,F_2(x)=10(0;x_1\!+x_2\!+2x_3\!-3;-x_1;-x_2,-x_3)$, and $G(x):=(x_1-x_2;x_1-x_2;10x_2;9\!-\!8x_1\!-\!6x_2\!-\!4x_3\!+\!4x_1^2\!+\!2x_2^2\!+2x_3^2)$.
\end{aexample}
\begin{aexample}\label{examA.6}
$\mathbb{X}=\mathbb{Y}=\mathbb{R}^2,\mathbb{Z}=\mathbb{R},\vartheta_1(y)=\|y\|_1,\vartheta_2(t)=t, F(x):=x$ and $G(x):=-\frac{5}{2}x_1\!+\frac{3}{2}(x_1^2+x_2^2)$.	
\end{aexample} 

 \bigskip
 \noindent
 {\large\bf Appendix C.} We first describe the iteration steps of the Polyak subgradient method (see \cite{Charisopoulos21,LiZhu20}).
%-------------------------------------------------------------------------------------

\begin{algorithm}[H]
 \renewcommand{\thealgorithm}{2} 	
 \caption{\label{subGrad}{\bf(subGM for solving model \eqref{SCAD-loss})}}
 \textbf{1: }Initialization: Choose an initial point $x^0\in\mathbb{X}$.\\
 \textbf{2: For}   $k=0,1,2,\cdots$  \textbf{do} \\ 
  \textbf{3: } \textbf{  }\textbf{  }
    Choose a subgradient $\zeta^k\in\partial\Phi(x^k)$.\\	
  \textbf{4:  }\label{step4} \textbf{   }\textbf{   }\textbf{  }\ Set $x^{k+1}=x^k-\frac{\Phi(x^k)-\min_{x\in\mathbb{X}}\Phi(x)}{\|\zeta^k\|^2}\zeta^k$.\\
\textbf{5: EndFor}.
\end{algorithm}

Next we introduce a PAM method for solving the problem \eqref{SCAD-loss}. With the notation at the beginning of Section \ref{sec6.3}, $\Phi(x)=\|F(x)\|_1-\rho^{-1}\sum_{i=1}^m\theta_{a}(\rho|G_i(x)|)+h(x)$ for $x=(U,V)\in\mathbb{X}=\mathbb{R}^{n_1\times r}\times\mathbb{R}^{n_2\times r}$. From the convexity and smoothness of $\theta_a$ in \eqref{theta-a}, for any $x,x'\in\mathbb{X}$, it holds that 
 \[
  \rho^{-1}\sum_{i=1}^m\theta_a(\rho|G_i(x)|)\ge \rho^{-1}\sum_{i=1}^m\theta_a(\rho|G_i(x')|)+\sum_{i=1}^m\theta_a'(\rho|G_i(x')|)(|G_i(x)|-|G_i(x')|).
 \]
 Write $w_{\rho}(x):=[\theta_a'(\rho|F_1(x)|),\ldots,\theta_a'(\rho|F_m(x)|)]^{\top}\in\mathbb{R}^m$ for $x\in\mathbb{X}$. Then, for any $x,x'\in\mathbb{X}$,
 \[
   \Phi(x)\le\|F(x)\|_1-\langle w_{\rho}(x'),|G(x)|-|G(x')|\rangle+h(x)-\rho^{-1}\sum_{i=1}^m\theta_{a}(\rho|G_i(x')|):=\Upsilon(x,x').
 \] 
 This, together with $\Upsilon(x',x')=\Phi(x)$, means that $\Upsilon(\cdot,x')$ is a majorization of $\Phi$ at $x'$. Inspired by this, we present the following proximal alternating minimization (PAM) method for solving problem \eqref{SCAD-loss}.
%---------------------------------------------------------------------------------------------------------------
\begin{algorithm}[h]
\renewcommand{\thealgorithm}{B}
 \caption{\label{PAM}{\bf\,(PAM method for solving problem \eqref{SCAD-loss})}}
 \textbf{1:} {Initialization:} Choose an initial point $x^0=(U^0,V^0)\in\mathbb{X}$.\\
 \textbf{2: For} $k=0,1,2,\ldots$ \textbf{do}	\\
\textbf{3:       }\textbf{    } Let $u^k\!=e-w_{\rho}(U^k,V^k)$. Compute the strongly convex minimization problem
		\begin{align*}%\label{Uk-subprob}
		U^{k+1}=\mathop{\arg\min}_{\!U\in\mathbb{R}^{n_1\times r}}\|u^k\circ F(U,V^k)\|_1&+\lambda\|U\|_{2,1}+\frac{\gamma_{1,k}}{2}\|U-{U}^k\|_F^2\\
   &+\frac{\alpha_{1,k}}{2}\|F(U,V^k)-F(U^k,V^k)\|_F^2.
		\end{align*}
   \textbf{4:       }\textbf{    } Let $v^{k}\!=e-w_{\rho}(U^{k+1},V^k)$. Compute the  strongly convex minimization problem
		\begin{align*}%\label{Vk-subprob}
  V^{k+1}=\mathop{\arg\min}_{V\in\mathbb{R}^{n_2\times r}}\|v^k\circ F(U^{k+1},V)\|_1&+\lambda\|V\|_{2,1}+\frac{\gamma_{2,k}}{2}\|V-{V}^k\|_F^2\\
   &+\frac{\alpha_{2,k}}{2}\|F(U^{k+1},V)-F(U^{k+1},V^k)\|_F^2.
		\end{align*}
 \textbf{5: end (For)}	
\end{algorithm}

 \begin{remark}\label{remark-BCD}
 {\bf(a)} Algorithm \ref{PAM} has the same iterations as the BSUM algorithm described in \cite[Section 4]{Razaviyayn13}, and its subsequence convergence was proved in \cite[Theorem 2]{Razaviyayn13} under the regularity of $\Phi$ on a level set of $\Phi$. To the best of our knowledge, when it is applied to solve composite optimization with a nonsmooth loss, its iterate sequence lacks a full convergence certificate via the KL-based analysis. The reason is that the relative error condition (like Proposition \ref{prop3-xk}) for the iterate to be a critical point of the objective function (or a potential function) requires the Aubin property (a Lipschitz-like property) of its subdifferential mapping, which is impossible to hold even for the simple $\ell_1$-norm loss. A full convergence was achieved in \cite{Hien23} for this case under the KL property of $\Phi$ and an additional assumption on the subdifferential of $\Phi$, but this assumption is very restricted and almost does not hold for the loss $\vartheta(\mathcal{A}(UV^{\top})-b)$ in model \eqref{SCAD-loss}. In addition, it is unclear whether the convergence results in \cite{Razaviyayn13,Hien23} are adapted to the inexact computation of subproblems involved in the PAM.  

 \noindent
 {\bf(b)} For the numerical tests in Sections, we apply the dPPASN in Section \ref{sec5} to seek an inexact solution of subproblems. The inexactness of $U^k$ means that $\max\{R_{1,k},R_{2,k},R_{3,k}\}\le\epsilon_k$, where 
\begin{subnumcases}{}
 R_{1,k}:=\frac{\|{\rm prox}_{f_{k-1}}(\xi^{k}+z^{k}-\alpha_{1,k-1}(z^{k}-z^{k-1}))-z^{k}\|}{1+\|b\|}, \nonumber\\
 R_{2,k}:=\frac{\|{\rm prox}_{\lambda\|\cdot\|_{2,1}}\big(U^{k}-\mathcal{A}^*(\xi^{k})V^{k-1}\!-\!\gamma_{1,k-1}(U^{k}\!-\!{U}^{k-1})\big)-U^{k}\|_F}{1+\|b\|},\nonumber\\
 R_{3,k}:=\frac{\|\mathcal{A}(U^{k}(V^{k-1})^{\top})-b-z^{k}\|}{1+\|b\|}
 \ \ {\rm with}\ z^{k}=\mathcal{A}(U^{k}(V^{k-1})^{\top})-b.
\end{subnumcases}
 In other words, the relative KKT residual at $(U^k,z^k)$ attains a certain accuracy instead of $0$. A similar inexactness criterion is used to seek $V^k$ by solving the subproblem with respect to $V$.   
 \end{remark}
% \begin{subnumcases}{}
%    \frac{\|{\rm prox}_{f_k}(\xi^{k+1}+z^{k+1}-\alpha_{1,k}(z^{k+1}-z^k))-z^{k+1}\|}{1+\|b\|}\leq\epsilon_k;\nonumber\\
%    \frac{\|{\rm prox}_{\lambda\|\cdot\|_{2,1}}\big(U^{k+1}-\mathcal{A}^*(\xi^{k+1})V^{k}\!-\!\gamma_{1,k}(U^{k+1}\!-\!{U}^k)\big)-U^{k+1}\|_F}{1+\|b\|}\leq \epsilon_k;\nonumber\\
%     \frac{\|\mathcal{A}(U^{k+1}(V^{k})^\mathbb{T})-b-z^{k+1}\|}{1+\|b\|}\leq \epsilon_k.\nonumber
%  \end{subnumcases}

\end{document}